 \theoremstyle{plain}
\newtheorem{thm}{Theorem}[section]
  \theoremstyle{plain}
  \newtheorem*{thm*}{Theorem}
  \theoremstyle{plain}
  \newtheorem{cor}[thm]{Corollary}
  \theoremstyle{remark}
  \newtheorem{rem}[thm]{Remark}
  \theoremstyle{definition}
  \newtheorem{defn}[thm]{Definition}
 \theoremstyle{definition}
 \newtheorem*{defn*}{Definition}
  \theoremstyle{plain}
  \newtheorem{lem}[thm]{Lemma}
 \theoremstyle{definition}
  \newtheorem{example}[thm]{Example}
  \theoremstyle{remark}
\newenvironment{keywords}{ \noindent\footnotesize\textbf{Keywords and phrases:}}{}
\newenvironment{class}{\noindent\footnotesize\textbf{Mathematics subject classification 2010:}}{}
\newcommand{\Abs}[1]{\left\lVert#1\right\rVert}
\newcommand{\R}{\mathbb{R}}
\newcommand*{\abs}[1]{\lvert#1\rvert}
\newcommand*{\sgn}{\operatorname{sgn}}
\newcommand*{\supp}{\operatorname{supp}}
\newcommand{\s}[1]{\mathcal{#1}}
\DeclareMathAccent{\Circ}{\mathalpha}{operators}{"17}
\newcommand{\interior}[1]{\Circ{#1}}
\newcommand{\lspan}{\operatorname{span}}
\renewcommand{\hat}{\widehat}
\renewcommand{\tilde}{\widetilde}
\renewcommand*{\epsilon}{\varepsilon}
\renewcommand*{\rho}{\varrho}
\begin{document}
\selectlanguage{english}%
\institut{Institut f\"ur Analysis}

\preprintnumber{MATH-AN-015-2012}

\preprinttitle{A Hilbert Space Perspective on Ordinary Differential Equations\\ with Memory Term.}

\author{Anke Kalauch, Rainer Picard, Stefan Siegmund, Sascha Trostorff \& Marcus Waurick}

\makepreprinttitlepage

\selectlanguage{american}%


\title{A Hilbert Space Perspective on Ordinary Differential Equations\\ 
with Memory Term}

\author{Anke Kalauch, Rainer Picard, Stefan Siegmund,\\
 Sascha Trostorff \& Marcus Waurick
\\[0.5ex]
 Institut f\"ur Analysis, Fachrichtung Mathematik\\
 Technische Universit\"at Dresden\\
 Germany
\\[0.5ex]
 anke.kalauch@tu-dreden.de\\
 rainer.picard@tu-dresden.de\\
 stefan.siegmund@tu-dresden.de\\
 sascha.trostorff@tu-dresden.de\\
 marcus.waurick@tu-dresden.de }
\maketitle
\begin{abstract}
\textbf{Abstract.} We discuss ordinary differential equations with delay and memory terms in Hilbert spaces. By introducing a time derivative as a normal operator in an appropriate Hilbert space, we develop a new approach to a solution theory covering integro-differential equations, neutral differential equations and general delay differential equations within a unified framework. We show that reasonable differential equations lead to causal solution operators.
\end{abstract}

\begin{keywords} ordinary differential equations, causality, memory,
delay \end{keywords}

\begin{class} 34K05 (Functional differential equations, general theory); 34K30 (Equations in abstract spaces); 34K40 (Neutral equations); 34A12 (Initial value problems, existence, uniqueness, continuous dependence and continuation of solutions); 45G15 (Systems of nonlinear integral equations)
\end{class}

\tableofcontents{}

\section{Introduction}

Delay differential equations (DDE)
\begin{equation}\label{eq:delay}
  \dot x(t) = f(t,x_t)
\end{equation}
are important in many areas of engineering and science (see e.g.\ \cite{Bal2009} and the references therein).
Let $t_0,t_1\in \mathbb{R}$ with $t_0<t_1$. If $a \in \R_{\geq 0}$ denotes the \emph{maximal delay} and $x : [t_0-a,t_1] \rightarrow \R^n$ is a continuous function then for $t \in [t_0,t_1]$ the function $x_t : [-a,0] \rightarrow \R^n$ denotes its (restricted) \emph{translate}, which is defined by $x_t(\theta) = x(t + \theta)$ for $\theta \in [-a,0]$. A continuous function $x : [t_0-a,t_1] \rightarrow \R^n$ is a \emph{solution of \eqref{eq:delay}} which takes at the \emph{initial time} $t_0$ the \emph{initial value} $\phi \in C := C([-a,0],\R^n)$, if it satisfies the following initial condition and integral equation \cite[Lemma 1.1]{HalLun1993}
\begin{equation}\label{eq:integral}
  x_{t_0} = \phi, \quad x(t) = \phi(0) + \int_{t_0}^t f(s,x_s) \,ds
  \qquad \text{for } t \in [t_0,t_1] .
\end{equation}
Note that it is assumed that $f : D \rightarrow \R^n$ is defined on an open subset $D \subseteq \R \times C$ and $[t_0,t_1] \ni s \mapsto f(s,x_s) \in \R^n$ is well-defined and continuous.

Instead of viewing solutions $x : [t_0-a,t_1] \rightarrow \R^n$ as functions with values in $\R^n$ one can equivalently consider the induced mapping $[t_0,t_1] \ni t \mapsto x_t \in C$ (see e.g.\ \cite{DieGilVerWal,HalLun1993}). Utilizing \eqref{eq:integral} and a contraction argument one can show the following existence theorem.

\begin{thm}[{\cite[Theorem 2.3]{HalLun1993}}]
Suppose that $f : D \rightarrow \R^n$ is continuous and Lipschitz continuous w.r.t.\ the second argument on every compact subset of $D$. Then for $(t_0,\phi) \in D$ equation \eqref{eq:delay} subject to the initial condition $x_{t_0} = \phi$, has a unique solution.
\end{thm}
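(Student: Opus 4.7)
The natural approach is the Picard-type contraction argument applied to the integral reformulation \eqref{eq:integral}. First I would fix an $\alpha > 0$ (to be chosen later) and introduce the operator
\[
  T(x)(t) :=
  \begin{cases}
    \phi(t - t_0), & t \in [t_0 - a, t_0], \\
    \phi(0) + \int_{t_0}^{t} f(s, x_s)\,ds, & t \in [t_0, t_0 + \alpha],
  \end{cases}
\]
whose fixed points on a suitable subset of $C([t_0 - a, t_0 + \alpha], \R^n)$ are, by \eqref{eq:integral}, exactly the solutions of \eqref{eq:delay} starting at $(t_0, \phi)$.

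To localise, I would exploit openness of $D$ to select $\alpha_0, \beta > 0$ such that the ``tube''
\[
  K := \{(s, \psi) \in \R \times C : t_0 \le s \le t_0 + \alpha_0,\ \|\psi - \phi\|_C \le \beta\}
\]
is contained in $D$. Here lies the main obstacle of the argument: since $C = C([-a, 0], \R^n)$ is infinite-dimensional, $K$ itself is not compact, so the hypothesis ``Lipschitz on compacta'' does not directly furnish a working Lipschitz constant on $K$. The remedy is to restrict attention to those $x$ whose slopes on $[t_0, t_0 + \alpha]$ are uniformly bounded: for such $x$ the curve $s \mapsto x_s$ is equicontinuous in $C$ (the part of $x_s$ coming from $\phi$ is controlled by uniform continuity of $\phi$, the part coming from the integral by the slope bound), so by the Arzel\`a--Ascoli theorem $\{x_s : s \in [t_0, t_0 + \alpha]\}$ is relatively compact. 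Taking the union over an admissible family of such $x$'s confines the pairs $(s, x_s)$ to a compact subset $\tilde K \subset D$ on which $f$ is bounded by some $M$ and Lipschitz with constant $L$ in the second variable.

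Equipped with these constants, I would introduce
\[
  X_\alpha := \{ x \in C([t_0 - a, t_0 + \alpha], \R^n) : x_{t_0} = \phi,\ \|x_s - \phi\|_C \le \beta \text{ for all } s \in [t_0, t_0 + \alpha] \},
\]
a closed subset of the Banach space $C([t_0 - a, t_0 + \alpha], \R^n)$ with supremum norm. Uniform continuity of $\phi$ together with the bound $\|f\| \le M$ on $\tilde K$ shows that for $\alpha \in (0, \alpha_0]$ small enough one has $T(X_\alpha) \subset X_\alpha$, while the Lipschitz estimate yields $\|T(x) - T(y)\|_\infty \le L\alpha\, \|x - y\|_\infty$. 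Shrinking $\alpha$ further to make $L\alpha < 1$ turns $T$ into a strict contraction on the complete metric space $X_\alpha$, and Banach's fixed-point theorem delivers a unique fixed point, i.e.\ a unique local solution. A standard continuation argument---prolonging along unique local solutions at each new right endpoint, with uniqueness ruling out branching---then yields the solution on the full interval $[t_0, t_1]$ asserted in the statement.
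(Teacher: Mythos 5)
The paper offers no proof of this theorem: it is quoted verbatim from Hale--Verduyn Lunel with the one-line indication that it follows from the integral reformulation \eqref{eq:integral} ``and a contraction argument.'' Your proposal reconstructs exactly that standard contraction argument, and the overall strategy --- localise in a tube inside $D$, recover a usable Lipschitz constant via Arzel\`a--Ascoli applied to the segments $s\mapsto x_s$ of slope-bounded candidates, then contract --- is the correct and classical one.

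Two points in the write-up deserve tightening. First, the set $X_\alpha$ you display does \emph{not} carry the slope bound that your own compactness discussion identifies as essential: for an arbitrary $x\in X_\alpha$ the segments $x_s$ need not lie in the compact set $\tilde K$ on which $L$ and $M$ were obtained, so the estimate $\|T(x)-T(y)\|_\infty\le L\alpha\|x-y\|_\infty$ is not available on all of $X_\alpha$ as written. The fix is the one you already have in mind: either replace $X_\alpha$ by its closed subset $Y_\alpha:=\{x\in X_\alpha : x \text{ is } M\text{-Lipschitz on }[t_0,t_0+\alpha]\}$, observe $T(X_\alpha)\subseteq Y_\alpha$ and $T(Y_\alpha)\subseteq Y_\alpha$, and contract on $Y_\alpha$; or note that every fixed point lies in $T(X_\alpha)\subseteq Y_\alpha$ and run the contraction there. (You also need the small preliminary step of bounding $f$ by $M$ on an open neighbourhood of the compact trajectory $\{(s,\tilde\phi_s): s\in[t_0,t_0+\alpha_0]\}$ of the constant extension of $\phi$; this follows from continuity of $f$ and a finite subcover, but it is where the bound $M$ actually comes from.) Second, your closing claim that continuation delivers the solution on all of $[t_0,t_1]$ overreaches: without a priori bounds the maximal solution may fail to reach $t_1$, and the cited theorem is a local existence--uniqueness statement (unique solution on some interval $[t_0-a,t_0+\alpha]$, with uniqueness of continuations wherever they exist). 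Neither issue reflects a missing idea; both are repairs of the exposition rather than of the argument.
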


Examples of delay differential equations \eqref{eq:delay} include
\begin{itemize}
  \item ordinary differential equations ($a=0$) $\dot x(t) = F(x(t))$
  \item differential difference equations $\dot x(t) = f(t,x(t),x(t-\tau_1(t)),\dots,x(t-\tau_p(t)))$ with $0 \leq \tau_j(t) \leq a$, $j=1,\dots,p$
  \item integro-differential equations $\dot x(t) = \int_{-a}^0 g(t,\theta,x(t+\theta)) \,d\theta$
\end{itemize}
Another important class of delay differential equations which generalizes \eqref{eq:delay} are
\begin{itemize}
  \item neutral delay differential equations $\dot x(t) = f(t,x_t,\dot x_t)$, i.e., \ $\dot x(t)$ depends not only on the past history of $x$ but also on the past history of $\dot x$
\end{itemize}

For an approach to a general class of partial differential delay equations, see \cite{Ruess0, Ruess1}. 


Solution theories for DDEs \eqref{eq:delay} adopt the point of view of semigroup theory and have as a common feature that they work with the integral equation representation \eqref{eq:integral} (\cite{DieGilVerWal, DiekGyl2008, Doan11}). In this paper we suggest a new approach to delay equations which is based on a unified Hilbert space approach for differential equations by Picard et al.\ \cite{0753.44002, Pi2009-1, PicMcG2011} and focusses on the properties of the time derivative as an operator on tailor-made solution spaces of exponentially bounded functions on the whole real line. Applying this approach to DDEs, it turns out that unbounded delay is not more difficult than bounded delay and many different classes of equations with memory, including integro-differential and neutral delay equations, can be treated in a unified way. The method consists in solving the differential equation in a weak sense. As a trade-off, it is possible to solve equations with source terms like characteristic functions or finite measures with bounded support on $\R$.

In order to illustrate one of the main ideas of this new approach, namely the time derivative as an operator acting on a function space of potential solutions defined on the whole real line, consider a simple special case of DDEs: scalar autonomous ordinary differential equations and their corresponding initial value problems
\[
  \dot x(t) = f(x(t)) \textup{ for } t \in \R_{> 0}, \qquad x(0) = x_0 \in \R
  .
\]
For simplicity assume that $f : \R \rightarrow \R$ is Lipschitz continuous and satisfies $f(0) = 0$. To extend this initial value problem on $\R_{\geq 0}$ to $\R$, denote by $\tilde x : \R_{\geq 0} \rightarrow \R$ its unique solution. $\tilde x$ is continuously differentiable and $\tilde x(0+) = x_0$. We trivially extend $\tilde x$ by setting
\[
  x : \R \rightarrow \R,\;
  t \mapsto
  \left\{
    \begin{array}{ll}
      0, & \textup{ for } t \in \R_{<0},
      \\
      \tilde x(t),\; & \textup{ for } t \in \R_{\geq 0}.
    \end{array}
  \right.
\]
The extension $x$ is continuously differentiable on $\R\setminus\{0\}$. Utilizing the Heavyside function $\chi_{\R_{>0}}$, we compute the distributional derivative of $x$
\[
  x' = (\chi_{\R_{>0}} \cdot x)' = x(0+)\delta + \chi_{\R_{>0}} x' =
       x_0 \delta + \chi_{\R_{>0}} x'
\]
where $\delta$ is the Dirac delta distribution. Hence, the distributional derivative of $x$ satisfies (keep in mind that $f(0) = 0$)
\[
  x' = f(x(\cdot)) + x_0 \delta
  .
\]
With this example in mind, we develop a solution theory for right hand sides being distributions. Moreover, we will cover a comprehensive class of initial value problems within our perspective (Theorem \ref{thm:ivode}). The simplifying assumption $f(0) = 0$ will be replaced by the notion of \emph{causality} (Theorem \ref{thm:causality}). In order to consistently develop an operator-theoretic point of view, we need a glimpse on extrapolation spaces and view the time derivative as an operator on embedded Sobolev spaces of exponentially weighted functions on $\R$ which form a Gelfand triple.

The structure of the paper is as follows. In Section \ref{sec:TTD} we introduce the time-derivative as a continuously invertible operator in an $L^2$-type space. We define the time-derivative on the whole real line in order to be able to use the Fourier transform and have a functional calculus at hand which allows to consider autonomous linear equations of neutral type very easily (Theorem \ref{thm:neutral}). We keep the paper self-contained but cite the basic concepts from \cite{PicMcG2011}.

Section \ref{sec:BST} gives the basic solution theory for differential equations of the form $\dot u = F(u)$, where $F$ is Lipschitz continuous in some appropriate sense. Here, the term solution theory means that we define a suitable Hilbert space in order to establish existence, uniqueness and continuous dependence on the data. It should be noted that the respective proofs are based on the contraction mapping theorem and are therefore comparatively elementary. We emphasize that since our approach is well-suited for the Hilbert space setting, there are very few technical difficulties. In particular, we do not encounter problems due to the non-reflexivity of the underlying space. This may lead to an easier treatment of stability and bifurcation analysis, since it avoids using the sun-star calculus \cite{DieGilVerWal}.

Section \ref{Sec: CausMem} is devoted to the study of causality which roughly means that a solution up to time $t$ only depends on the right hand side up to time $t$. We also state a notion which is dual to causality and which may be called amnesic. This concept gives a possible way to rigorously \emph{define} what it means for a differential equation to be a delay differential equation. The notion of causality is of prime importance to describe physically reasonable equations, it was so far almost not present in the classical literature on differential equations and only recently gained some attention \cite{Laksh10,Thomas1997,Weiss2003}. Section \ref{Sec: CausMem} also provides a theorem (Theorem \ref{thm:causality}) that states that reasonable right hand sides lead to causal solution operators.

Section \ref{sec:app} illustrates the applicability and versatility of the concepts developed in the Sections \ref{sec:BST} and \ref{Sec: CausMem}. The first part of this section deals with the formulation of initial value problems within our context. In the second part we present a method to establish local solvability. We only give an introductory example of how to treat equations of the type $\dot u(t) = g(t,u(t))$, where we assume $g$ to be only locally Lipschitz continuous. Adopting this strategy to the general case would then also lead to a respective local solution theory. However, since we aim to cover a broad class of a priori different structures, we focus on global-in-time solutions in order to keep this exposition rather elementary and to avoid many technicalities.
In the third part of this section we study special types of delay differential equations including some retarded functional differential equations, integro-differential equations and equations of neutral type.

To fix notation, let $H$ denote a Hilbert space over the field $\mathbb{K}$  where $\mathbb{K}\in \{\R,\mathbb{C}\}$.


\section{The Time Derivative}\label{sec:TTD}

To develop our operator-theoretic approach to differential equations with memory, it is pivotal to establish
the time derivative as a boundedly invertible operator in an adequate Hilbert
space setting. This strategy uses specific properties of the one-dimensional derivative on the real line.
In order to describe these structural features appropriately, we need the following operators.

\begin{defn} Denoting by $\interior C_{\infty}\left(\mathbb{R}\right)$ the set of smooth functions with compact support and by $L^2(\mathbb R)$ the Hilbert space (of equivalence classes) of square-integrable functions w.r.t. Lebesgue measure, we define the time derivative
\[
\partial_{c}:\interior C_{\infty}\left(\mathbb{R}\right)\subseteq L^{2}(\mathbb{R})\to L^{2}(\R):\phi\mapsto\phi'\]
 and the multiplication operator 
\[
m_{c}:\interior C_{\infty}\left(\mathbb{R}\right)\subseteq L^{2}(\R)\to L^{2}(\R):\phi\mapsto(x\mapsto x\phi(x)).\]
 Moreover, the Fourier transform $\s F_c$ is defined in the following way
\begin{align*}
\s F_{c}: & \interior C_{\infty}\left(\mathbb{R}\right)\subseteq L^{2}(\R)\to L^{2}(\R)\\
 & \phi\mapsto\left(x\mapsto\frac{1}{\sqrt{2\pi}}\int_{\R}e^{-\mathrm{i}xy}\phi(y)dy\right).\end{align*}
 \end{defn}

\begin{thm}\label{thm: operators} The operators $\partial_{c}$, $m_{c}$, $\s F_{c}$ are densely
defined and closable. Moreover, for $\partial:=-\partial_{c}^{*},m:=\overline{m_{c}},\s F:=\overline{\s F_{c}}$
the following properties hold
\begin{enumerate}[(a)]
\item \label{ab3} $\s F$ is unitary from $L^{2}(\R)$ onto $L^{2}(\R)$,

\item \label{ab1} $D(\partial)=\{f\in L^{2}(\R);f'\in L^{2}(\R)\}$, where $f'$ denotes the distributional derivative,

\item \label{ab2} $D(m)=\{f\in L^{2}(\R);(x\mapsto xf(x))\in L^{2}(\R)\}$,

\item \label{ab4} $\partial,\mathrm{i}m$ are skew-selfadjoint,

\item \label{ab5} $\partial=\s F^{*}\mathrm{i}m\s F$. \end{enumerate}
\end{thm}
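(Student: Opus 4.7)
The plan is to dispatch density and closability first, and then prove (a), (c), (d), (b), (e) in that order, since (e) synthesizes the others. The main obstacle is the core property buried in (c): showing that $\interior C_\infty(\R)$ is dense in the domain of maximal multiplication in the graph norm.

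Density of $\interior C_\infty(\R)$ in $L^2(\R)$ is classical, and closability of $\partial_c$, $m_c$, $\s F_c$ follows by exhibiting a densely defined adjoint on $\interior C_\infty(\R)$: integration by parts gives $\langle\partial_c\phi,\psi\rangle=-\langle\phi,\partial_c\psi\rangle$ (skew-symmetry), $m_c$ is visibly symmetric, and Fubini shows that $\s F_c^*$ contains the dual Fourier formula $\psi\mapsto(y\mapsto\frac{1}{\sqrt{2\pi}}\int_\R e^{\mathrm{i}xy}\psi(x)\,dx)$.

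For (a), Plancherel's identity $\|\s F_c\phi\|_{L^2}=\|\phi\|_{L^2}$ on $\interior C_\infty(\R)$---a standard Fubini computation---extends $\s F_c$ by density to an isometry $\s F$, and Fourier inversion, verified by Gaussian regularisation on test functions and then extended by continuity, gives $\s F\s F^*=\s F^*\s F=I$. For (c) and (d), define the maximal multiplication operator $M\colon f\mapsto(x\mapsto xf(x))$ on $D(M)=\{f\in L^2:xf\in L^2\}$. The inclusion $\overline{m_c}\subseteq M$ is immediate by passing to an a.e.\ convergent subsequence. The reverse inclusion---the main technical step---proceeds by truncating $f\in D(M)$ as $f_n:=\chi_{[-n,n]}f$ (dominated convergence yields $f_n\to f$ and $xf_n\to xf$ in $L^2$), then mollifying to land in $\interior C_\infty(\R)$ while preserving both limits. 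Selfadjointness of $M$ is verified directly: for $g\in D(M^*)$ with $M^*g=h$, testing $\langle Mf,g\rangle=\langle f,h\rangle$ against $f=\chi_{[a,b]}\in D(M)$ and applying Lebesgue differentiation yields $xg=h$ a.e., so $g\in D(M)$; hence $m=M=m^*$ and $\mathrm{i}m$ is skew-selfadjoint.

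For (b), I would compute $\partial_c^*$ directly: $g\in D(\partial_c^*)$ iff $\phi\mapsto\int\phi'\overline g$ is $L^2$-bounded on $\interior C_\infty(\R)$, which by the definition of the distributional derivative and Riesz representation is exactly the condition $g\in L^2$ with $g'\in L^2$, whereupon $\partial_c^* g=-g'$; hence $\partial g=g'$ on $H^1(\R)$. For (e), integration by parts under the Fourier integral gives $\s F\partial_c\phi=\mathrm{i}m\s F\phi$ for $\phi\in\interior C_\infty(\R)$, so both sides extend to the closed operators $\partial$ and $A:=\s F^*\mathrm{i}m\s F$, the latter skew-selfadjoint by (a) and (d). Using (a), (b), and the core property from (c), one verifies $D(A)=\{f\in L^2:x\widehat f\in L^2\}=H^1(\R)=D(\partial)$ by approximating elements of either domain via $\s F^*$-images of $\interior C_\infty(\R)$-approximations in the graph norm of $m$, and symmetrically via Schwartz approximations in $H^1$; on this common domain both operators act as $f\mapsto f'$, proving $\partial=\s F^*\mathrm{i}m\s F$.
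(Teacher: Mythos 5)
Your proposal is correct. It does not contradict the paper's argument but it is not really the same proof either: the paper's own proof is essentially a citation proof --- it declares density, (b) and (c) to be clear, obtains closability from the inclusions $\partial_c\subseteq\partial$ and $m_c\subseteq m^*$, and outsources the two substantive facts (selfadjointness of $\mathrm i\partial$ to Kato, the Fourier conjugation formula (e) to Akhiezer--Glazman), finally deducing (d). You instead supply self-contained arguments for exactly the pieces the paper cites: the identification $\overline{m_c}=M$ with the maximal multiplication operator via truncation and mollification, selfadjointness of $M$ by testing against $\chi_{[a,b]}$ and Lebesgue differentiation, the direct computation of $\partial_c^*$ from the definition of the distributional derivative and Riesz representation for (b), and the domain identification $D(\s F^*\mathrm im\s F)=H^1(\R)=D(\partial)$ via the intertwining relation on Schwartz functions together with the core properties of $\interior C_{\infty}(\R)$ for $m$ and for $\partial$. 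Your deduction of skew-selfadjointness of $\partial$ from (e) plus skew-selfadjointness of $\mathrm i m$ matches the paper's logical order (``thus, also (d) follows''), whereas the paper also has the independent citation to Kato available. What your route buys is a proof that is verifiable without the two external references, at the cost of the routine but nontrivial density/core verifications (density of $\interior C_{\infty}(\R)$ in $H^1(\R)$ and in $D(M)$ with graph norm), which you correctly identify as the only real technical content and sketch by the standard truncate-then-mollify argument.
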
 \begin{proof} The operators $\partial_{c}$, $m_{c}$, $\s F_{c}$
are clearly densely defined. Closability of $\s F_{c}$ is clear,
since $\s F_{c}$ is norm-preserving and has dense range \cite[Theorem V.2.8 and  Lemma V.1.10]{Wer2007}.
Consequently, \eqref{ab3} follows. The closability of $\partial_{c}$ and $m_{c}$
follows from $\partial_{c}\subseteq\partial$ and $m_{c}\subseteq m^{*}$,
respectively. \eqref{ab1} is immediate from the definition of distributional
derivatives and \eqref{ab2} is easy. The fact that $\mathrm{i}\partial$ is selfadjoint can be found in \cite[Example 3.14]{Kat1980}. \eqref{ab5} is proved in \cite[Volume 1, p.161-163]{0467.47001}. Thus, also \eqref{ab4} follows. \end{proof}

Before we turn to further studies on the operator $\partial$, we
need to define weighted $L^{2}$-type spaces. This was also done in
\cite[Section 1.2]{0753.44002} and for convenience and to fix some
notation it will be paraphrased shortly.
\begin{defn}\label{Def: weightedL2} Let $\rho \in \R$.
Define $H_{\rho,0}(\R):=\{f\in L_{\textnormal{loc}}^{2}(\R);(x\mapsto\exp(-\rho x)f(x))\in L^{2}(\R)\}$.
We endow $H_{\rho,0}(\R)$ with the scalar product \[
(f,g)\mapsto\langle f,g\rangle_{\rho,0}:=\int_{\R}f(x)^{*}g(x)\exp(-2\rho x)dx.\]
 We note that the associated norm, denoted by $\left|\:\cdot\:\right|_{\rho,0}$,
is an $L^{2}$-type variant of the ``Morgenstern-norm'' (\cite{Ref195229}), which is familiar from the proof of the classical Picard-Lendelöf theorem. Moreover, we define the following unitary operator \[
\exp(-\rho m):H_{\rho,0}(\R)\to L^{2}(\R):f\mapsto(x\mapsto\exp(-\rho x)f(x)).\]
By the unitarity of $\exp(-\rho m)$ we have $\exp(-\rho m)^{-1}=\exp(-\rho m)^*$.
\end{defn}

\begin{rem}\label{rem: selfadjoint} We note that for selfadjoint operators $A$ the spectrum is purely
real, i.e., $\sigma(A)\subseteq\R$. Consequently, the spectrum of
skew-selfadjoint operators is contained in the imaginary axis.
\end{rem}

Denoting by $\lVert\cdot\rVert_{L(X,Y)}$ the operator norm of a bounded linear operator from the Banach space $X$ to the Banach space $Y$, we have the following corollary.

\begin{cor}\label{cor:FLT} Let $\rho\in \R\setminus\{0\}$. Define $\partial_{\rho}:=\exp(-\rho m)^{-1}\partial\exp(-\rho m)$.
Introducing the \emph{Fourier-Laplace transform} $\s L_{\rho}:=\s F\exp(-\rho m)$
and $\partial_{0,\rho}:=\partial_{\rho}+\rho$, we have the following
\begin{enumerate}[(a)]

\item \label{cor1} $\partial_{\rho}=\s L_{\rho}^{*}\mathrm{i}m\s L_{\rho}$,

\item \label{cor2} the operators $\mathrm{i}m+\rho$, $\partial_{0,\rho}$ are continuously invertible,

\item \label{cor3} $\partial_{0,\rho}^{-1}=\s L_{\rho}^{*}(\mathrm{i}m+\rho)^{-1}\s L_{\rho}$,

\item \label{cor4} $\lVert \partial_{0,\rho}^{-1} \rVert_{L(H_{\rho,0}(\R),H_{\rho,0}(\R))} = \lVert (\mathrm{i}m +\rho)^{-1} \rVert_{L(L^{2}(\R),L^{2}(\R))} = \frac1{\abs{\rho}}$.
\end{enumerate} Moreover, the following formula holds \[
\left(\partial_{0,\rho}^{-1}\varphi\right)\left(x\right)=\left(\left(\partial_{\rho}+\rho\right)^{-1}\varphi\right)\left(x\right)=\begin{cases} \int_{-\infty}^{x}\varphi\left(t\right)\: dt, & \rho>0,\\
-\int_{x}^{-\infty}\varphi\left(t\right)\: dt, & \rho<0,\end{cases}
\]
 for all $\varphi\in\interior C_{\infty}\left(\mathbb{R}\right)$
and $x\in\mathbb{R}$. \end{cor}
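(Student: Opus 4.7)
The plan is to derive items (a)--(d) by combining the spectral factorization $\partial=\s F^{*}\mathrm{i}m\s F$ from Theorem~\ref{thm: operators} with the unitarity of $\exp(-\rho m)$, and then to verify the explicit integral representation by a direct pointwise computation on test functions.

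For (a), I would observe that $\s L_{\rho}=\s F\exp(-\rho m)$ is unitary as a composition of unitaries, and consequently $\s L_{\rho}^{*}=\exp(-\rho m)^{*}\s F^{*}=\exp(-\rho m)^{-1}\s F^{*}$. Substituting the identity $\partial=\s F^{*}\mathrm{i}m\s F$ into the definition $\partial_{\rho}=\exp(-\rho m)^{-1}\partial\exp(-\rho m)$ and grouping terms yields $\partial_{\rho}=\s L_{\rho}^{*}\mathrm{i}m\s L_{\rho}$ directly. For (b) and (c), I would use that $m$ is selfadjoint with $\sigma(m)=\R$ (Remark~\ref{rem: selfadjoint} and Theorem~\ref{thm: operators}\eqref{ab4}), so that $\mathrm{i}m+\rho$ corresponds, via the spectral theorem for $m$, to multiplication by $y\mapsto\mathrm{i}y+\rho$ on $L^{2}(\R)$; since $\rho\neq 0$, this function is bounded away from $0$, so $\mathrm{i}m+\rho$ is boundedly invertible with inverse the multiplication by $y\mapsto(\mathrm{i}y+\rho)^{-1}$. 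Then (a) and the unitarity of $\s L_{\rho}$ give
\[
\partial_{0,\rho}=\partial_{\rho}+\rho=\s L_{\rho}^{*}(\mathrm{i}m+\rho)\s L_{\rho},
\]
which implies $\partial_{0,\rho}^{-1}=\s L_{\rho}^{*}(\mathrm{i}m+\rho)^{-1}\s L_{\rho}$, establishing (b) and (c) simultaneously. Claim (d) follows because unitary conjugation preserves operator norms and $\|(\mathrm{i}m+\rho)^{-1}\|=\operatorname{ess\,sup}_{y\in\R}|\mathrm{i}y+\rho|^{-1}=1/|\rho|$.

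For the integral formula, let $\varphi\in\interior C_{\infty}(\R)$ with $\supp\varphi\subseteq[-M,M]$, and define $u$ as the right-hand side of the claimed formula (interpreted, for $\rho<0$, as $-\int_{x}^{\infty}\varphi(t)\,dt$, so that $u$ decays at $+\infty$). In each sign case $u$ is smooth with $u'=\varphi$ classically, $u$ vanishes on one half-line and is constant on the other; with the weight $e^{-\rho x}$ chosen according to the sign of $\rho$, the function $w:=\exp(-\rho m)u$ and its classical derivative both lie in $L^{2}(\R)$, so $w\in D(\partial)$ with $\partial w=w'$. Using $w'(x)=e^{-\rho x}(\varphi(x)-\rho u(x))$, I would then compute
\[
(\partial_{\rho}+\rho)u=\exp(-\rho m)^{-1}\partial\exp(-\rho m)u+\rho u=\varphi-\rho u+\rho u=\varphi,
\]
so that $u=\partial_{0,\rho}^{-1}\varphi$ by the invertibility obtained in (b).

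The main technical point is bookkeeping the asymptotics so that $u\in H_{\rho,0}(\R)$: one must integrate from the correct endpoint so that $u$ grows in the direction in which the weight $e^{-\rho x}$ decays and stays constant in the opposite direction, which is precisely what forces the two cases $\rho>0$ and $\rho<0$. Everything else is formal manipulation with unitaries and the functional calculus for the selfadjoint operator $m$.
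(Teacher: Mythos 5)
Your proof is correct and follows essentially the same route the paper intends: items (a)--(d) come from conjugating the factorization $\partial=\s F^{*}\mathrm{i}m\s F$ with the unitary $\exp(-\rho m)$ together with the spectral bound $\lvert\mathrm{i}y+\rho\rvert\geq\lvert\rho\rvert$, and the integral formula is verified directly on test functions (the paper dismisses all of this as ``immediate''/``elementary''). You also correctly read the $\rho<0$ case as $-\int_{x}^{+\infty}\varphi(t)\,dt$ --- the upper limit $-\infty$ in the stated formula is evidently a typo --- which is exactly what is needed for $u$ to lie in $H_{\rho,0}(\R)$.
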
 
\begin{proof} \eqref{cor1} is
immediate from Definition \ref{Def: weightedL2} and Theorem \ref{thm: operators}. \eqref{cor2}
follows from Remark \ref{rem: selfadjoint}. \eqref{cor3}
is clear by \eqref{cor1}. The remaining formulas are elementary. \end{proof}

We emphasize that $\partial_{0,\rho}$ only depends on $\rho$ with regards to the domain of definition, i.e., for all $f\in D(\partial_{0,\rho_1})\cap D(\partial_{0,\rho_2})$ it holds $\partial_{0,\rho_1}f=\partial_{0,\rho_2}f$ for all $\rho_1,\rho_2\in \R$. Moreover, $\interior C_{\infty}\left(\mathbb{R}\right)$ is a core for $\partial_{0,\rho}$ and we have for $\phi\in\interior C_{\infty}\left(\mathbb{R}\right)$,
the equality
 \[
    \partial\phi=\partial_{0,\rho}\phi=(\partial_{\rho}+\rho)\phi.
 \]
We will show that our solution theory is independent of  $\rho$ (see Remark \ref{Rem: rho-Independent} and Theorem \ref{thm: rho-Independent} for a more detailed discussion).

In our approach to delay differential equations, we will need a glimpse
on extrapolation spaces. The core of the issues needed here is summarized
in the next definition and the subsequent theorem.

\begin{defn} For $\rho\in \R\setminus\{0\}$ we define $H_{\rho,-1}(\mathbb{R}):= H_{-\rho,1}(\mathbb{R})^*$ the space of continuous linear functionals on $H_{-\rho,1}(\R)$. The respective norm is denoted by $\abs{\cdot}_{\rho,-1}$. Moreover,
define $H_{\rho,1}(\mathbb{R})$ as the Hilbert space $D(\partial_{0,\rho})$
endowed with the norm $\abs{\cdot}_{\rho,1}:\phi\mapsto\abs{\partial_{0,\rho}\phi}_{\rho,0}$.
\end{defn}

\begin{rem}\label{rem: Identification}
  We note that we may identify $H_{\rho,0}(\R) \subseteq H_{\rho,-1}(\R)$ via the mapping
   \[
      H_{\rho,0}(\R) \ni \phi \mapsto \left(H_{-\rho,1}(\R) \ni \psi \mapsto \langle \exp(-\rho m) \phi, \exp(\rho m) \psi \rangle_{0,0} =: \langle \phi,\psi\rangle_{0,0} \in \mathbb{K}\right) \in H_{\rho,-1}(\R).
   \]
  We shall do so henceforth.
\end{rem}

\begin{thm}{\label{thm:sobolev chain}} Let $\rho\in \R\setminus\{0\}$. Then we have the following chain of continuous
embeddings \[
H_{-\rho,1}(\mathbb{R})\hookrightarrow H_{-\rho,0}(\mathbb{R})\stackrel{\exp(\rho m)}{\rightarrow}  H_{0,0}(\mathbb{R}) \stackrel{\exp(-\rho m)^{-1}}{\rightarrow} H_{\rho,0}(\mathbb{R}) \hookrightarrow H_{\rho,-1}(\mathbb{R}),\]
 the triple $(H_{-\rho,1}(\mathbb{R}),H_{0,0}(\mathbb{R}),H_{\rho,-1}(\mathbb{R}))$
is a \emph{Gelfand triple}. The following mappings \begin{align*}
\partial_{0,1\to0}: & H_{-\rho,1}(\mathbb{R})\to H_{-\rho,0}(\mathbb{R})\\
 & \phi\mapsto\partial_{0,-\rho}\phi\\
\partial_{0,0\to1}: & H_{-\rho,0}(\mathbb{R})\to H_{-\rho,1}(\mathbb{R})\\
 & \phi\mapsto\partial_{0,-\rho}^{-1}\phi \\
\partial_{0,0\to-1}: & H_{\rho  ,0}(\mathbb{R})\to H_{\rho,-1}(\mathbb{R})\\
 & \phi\mapsto(H_{-\rho,1}(\R)\ni\psi \mapsto \langle \phi,- \partial_{0,-\rho} \psi \rangle_{0,0})\\
\partial_{0,-1\to0}: & H_{\rho,-1}(\mathbb{R})\to H_{\rho,0}(\mathbb{R})\\
 & \phi\mapsto\left( H_{-\rho,0}(\R)\ni\psi \mapsto \phi(-\partial_{0,-\rho}^{-1}\psi)\right) \end{align*}
 are unitary. \end{thm}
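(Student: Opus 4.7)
My plan is to decompose the proof into three parts: (i) continuity of the embeddings and the resulting Gelfand triple structure, (ii) unitarity of $\partial_{0,1\to 0}$ and its inverse $\partial_{0,0\to 1}$, and (iii) unitarity of $\partial_{0,0\to -1}$ together with the identification of $\partial_{0,-1\to 0}$ as its inverse.

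For part (i), the embedding $H_{-\rho,1}(\R)\hookrightarrow H_{-\rho,0}(\R)$ is immediate from Corollary \ref{cor:FLT}(\ref{cor4}): for $\phi\in H_{-\rho,1}$, $|\phi|_{-\rho,0}=|\partial_{0,-\rho}^{-1}\partial_{0,-\rho}\phi|_{-\rho,0}\leq\tfrac{1}{|\rho|}|\phi|_{-\rho,1}$. The weight transformations $\exp(\rho m)$ and $\exp(-\rho m)^{-1}$ are unitary by Definition \ref{Def: weightedL2}. For $H_{\rho,0}(\R)\hookrightarrow H_{\rho,-1}(\R)$ I would combine Remark \ref{rem: Identification} with Cauchy--Schwarz via $\langle\phi,\psi\rangle_{0,0}=\langle\exp(-\rho m)\phi,\exp(\rho m)\psi\rangle_{L^2(\R)}$, yielding $|\langle\phi,\psi\rangle_{0,0}|\leq|\phi|_{\rho,0}|\psi|_{-\rho,0}\leq\tfrac{1}{|\rho|}|\phi|_{\rho,0}|\psi|_{-\rho,1}$, so the induced functional has $H_{\rho,-1}$-norm bounded by $\tfrac{1}{|\rho|}|\phi|_{\rho,0}$. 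The Gelfand triple structure then falls out: $H_{0,0}(\R)$ sits in the middle of the chain, and the outer spaces are dual through the $\langle\cdot,\cdot\rangle_{0,0}$-pairing.

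For part (ii), the $H_{-\rho,1}$-norm was defined so that $\partial_{0,-\rho}\colon H_{-\rho,1}\to H_{-\rho,0}$ is isometric by construction; surjectivity is Corollary \ref{cor:FLT}(\ref{cor2}). Hence $\partial_{0,1\to 0}$ is unitary and $\partial_{0,0\to 1}$ is simply its inverse.

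For part (iii), the substitution $\eta:=-\partial_{0,-\rho}\psi$ bijects the closed unit ball of $H_{-\rho,1}$ onto that of $H_{-\rho,0}$, giving
\[
|\partial_{0,0\to -1}\phi|_{\rho,-1}=\sup_{|\eta|_{-\rho,0}\leq 1}|\langle\phi,\eta\rangle_{0,0}|=|\phi|_{\rho,0},
\]
where the last equality is Cauchy--Schwarz together with the explicit witness $\eta(x):=\phi(x)\exp(-2\rho x)/|\phi|_{\rho,0}$, which satisfies $|\eta|_{-\rho,0}=1$ and $\langle\phi,\eta\rangle_{0,0}=|\phi|_{\rho,0}$. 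For surjectivity, given $F\in H_{\rho,-1}$ I would form the continuous functional $\tilde F(\eta):=-F(\partial_{0,-\rho}^{-1}\eta)$ on $H_{-\rho,0}$, apply the Riesz representation theorem in $H_{-\rho,0}$ to obtain $\phi_0\in H_{-\rho,0}$ with $\tilde F=\langle\phi_0,\cdot\rangle_{-\rho,0}$, and then weight-transform by setting $\phi(x):=\phi_0(x)\exp(2\rho x)$; a direct check gives $\phi\in H_{\rho,0}$ with $|\phi|_{\rho,0}=|\phi_0|_{-\rho,0}$ and $\langle\phi,\cdot\rangle_{0,0}=\tilde F$, hence $F=\partial_{0,0\to -1}\phi$. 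Finally, identifying $H_{\rho,0}$ with $H_{-\rho,0}^*$ via $f\mapsto\langle f,\cdot\rangle_{0,0}$ (the natural extension of Remark \ref{rem: Identification}), a direct computation yields $(\partial_{0,-1\to 0}\circ\partial_{0,0\to -1})(f)(\psi)=\langle f,-\partial_{0,-\rho}(-\partial_{0,-\rho}^{-1}\psi)\rangle_{0,0}=\langle f,\psi\rangle_{0,0}$, which is $f$ under the identification; thus $\partial_{0,-1\to 0}=\partial_{0,0\to -1}^{-1}$ is automatically unitary. The main obstacle here is purely bookkeeping: consistently tracking which pairing ($\langle\cdot,\cdot\rangle_{0,0}$ versus the weighted inner products $\langle\cdot,\cdot\rangle_{\pm\rho,0}$) is in play and in which space the Riesz representation is invoked.
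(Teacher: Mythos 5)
Your proof is essentially correct, but note that the paper itself does not prove this theorem: it simply defers to the general construction of Sobolev chains in the cited references \cite{PicMcG2011, Wau2011}. What you have written is therefore a self-contained direct verification rather than a different route through the same material, and the individual steps check out: the embedding estimates via $\lVert\partial_{0,\pm\rho}^{-1}\rVert = 1/\abs{\rho}$ and Cauchy--Schwarz; the observation that $\partial_{0,1\to0}$ is isometric by the very definition of $\abs{\cdot}_{-\rho,1}$ and surjective by Corollary \ref{cor:FLT}; and, for $\partial_{0,0\to-1}$, the substitution $\eta=-\partial_{0,-\rho}\psi$ turning the $H_{\rho,-1}$-norm into a supremum over the unit ball of $H_{-\rho,0}(\R)$, with the norming element $\eta=\exp(-2\rho m)\phi/\abs{\phi}_{\rho,0}$ giving isometry and Riesz representation in $H_{-\rho,0}(\R)$ plus the weight transformation giving surjectivity. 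Two small points should be made explicit to close the argument. First, a Gelfand triple requires the embedding of the first space into the middle one to have \emph{dense} range; you should record that the image of $H_{-\rho,1}(\R)$ in $H_{0,0}(\R)$ under the composite weight map contains $\interior C_{\infty}(\R)$ (multiplying a test function by $\exp(\rho\,\cdot)$ stays in $\interior C_{\infty}(\R)$) and is therefore dense. Second, your final step uses that $f\mapsto\langle f,\cdot\rangle_{0,0}$ is an \emph{isometric} isomorphism from $H_{\rho,0}(\R)$ onto $H_{-\rho,0}(\R)^{*}$; this is exactly the same Cauchy--Schwarz-plus-witness computation you already carried out for $\partial_{0,0\to-1}$, so one sentence noting this suffices, after which the identity $\partial_{0,-1\to0}\circ\partial_{0,0\to-1}=\identity$ exhibits $\partial_{0,-1\to0}$ as the inverse of a unitary operator and hence as unitary itself.
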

\begin{proof} This is part of the more general construction of Sobolev chains and
can be found in \cite{PicMcG2011, Wau2011}. \end{proof}
\newpage
\begin{rem}\label{rem: dualdescr}~

 \begin{enumerate}[(a)]
\item For the sake of simplicity, we will write $\partial_{0,-\rho}$ for $\partial_{0,1\to0}$, $\partial_{0,-\rho}^{-1}$ for $\partial_{0,0\to1}$, $\partial_{0,\rho}$ for $\partial_{0,0\to-1}$ and $\partial_{0,-\rho}$ for $\partial_{0,-1\to0}$. With this notation, we arrive at the following duality. For $\phi,\psi \in \interior C_\infty(\R)$ and $\rho\in\R\setminus\{0\}$ we have
\[
    \langle \partial_{0,\rho} \phi , \psi \rangle_{0,0} = \langle \phi, -\partial_{0,-\rho}\psi \rangle_{0,0} 
    \quad \text{and} \quad
    \langle \partial_{0,\rho}^{-1} \phi , \psi \rangle_{0,0} = \langle \phi, -\partial_{0,-\rho}^{-1}\psi \rangle_{0,0}.
\]
\item\label{rem: dualdescr2}  We shall mention another possible duality, namely that of the Hilbert space adjoint, which in this setting may be computed. It is straightforward to show that the adjoint $(\partial_{0,\rho}^{-1})^*$ of $\partial_{0,\rho}^{-1} : H_{\rho,0}(\R) \to H_{\rho,0}(\R)$ is given by
\[
    (\partial_{0,\rho}^{-1})^* = \exp(\rho m)^{-1} \exp(-\rho m) \partial_{0,-\rho}^{-1} \exp(-\rho m)^{-1} \exp(\rho m).
\]
We may also use the continuous extension of $(\partial_{0,\rho}^{-1})^*$ as a Banach space isomorphism from $H_{\rho,-1}(\R)$ to $H_{\rho,0}(\R)$.
 \item  Assume $\rho>0$. Developing our approach to delay equations,
we will define what it means for a mapping to have delay or to be
amnesic in Section \ref{Subsec: DelayMem}. Prototypes for the former are
$\partial_{0,\rho}^{-1}$ and $(\partial_{0,-\rho}^{-1})^*$,  for the latter $(\partial_{0,\rho}^{-1})^*$ and $\partial_{0,-\rho}^{-1}$.

\item All the theory developed above can be generalized to function
spaces with values in a Hilbert space. A way for doing so is by means
of tensor product constructions, which can be found in \cite{Wei1980, PicMcG2011}. We use the notation introduced in \cite{PicMcG2011}, i.e., for two Hilbert spaces $H_1$ and $H_2$ the tensor product will be denoted by $H_1\otimes H_2$. For total subsets $D_1\subseteq H_1$ and $D_2\subseteq H_2$, we denote the algebraic tensor product by
\[
  D_1\stackrel a\otimes D_2 :=\lspan \{ \phi \otimes \psi ; \phi \in D_1 ,\psi \in D_2\}.
\]
Let $H$ be a Hilbert space and $A: D(A)\subseteq H_1 \to H_2$ be a closable, densely defined linear operator. The identity in $H$ is denoted by $1_H$. Let $D\subseteq H$ be dense. For $\phi\in D(A)$ and $\psi\in D$, we may define
\[
   (A\stackrel a\otimes 1_H)(\phi\otimes \psi) := A\phi \otimes \psi.
\] It can be shown that $A\stackrel a\otimes 1_H$ has a well-defined linear extension as an operator from $D(A)\stackrel a\otimes D\subseteq H_1 \otimes H$ to $H_2 \otimes H$, we re-use the name $A\stackrel a\otimes 1_H$ for that extension. The closure of $A\stackrel a\otimes 1_H$ is again a linear operator, which will be denoted by $A \otimes 1_H$. An analogous construction can be done for $1_H\otimes A$.

\item We specialize the latter remark. If $H$ is a Hilbert space of functions, e.g., if $H= L^2(\Omega,\Sigma,\mu)$ for some measure space $(\Omega,\Sigma,\mu)$ the Hilbert space tensor product \linebreak $L^2(\Omega,\Sigma,\mu)\otimes H_1$ is isometrically isomorphic to the space of square-integrable $H_1$-valued $L^2$-functions $L^2(\Omega,\Sigma,\mu;H)$. In that case the operator $1_{L^2(\Omega,\Sigma,\mu)}\otimes A$ is the canonical extension of $A$, which acts as
\[
  (1_{L^2(\Omega,\Sigma,\mu)}\otimes A)(t\mapsto \chi_S(t)\phi) = (t\mapsto \chi_S(t)A\phi),
\]
where $S\in \Sigma, \mu(S)<\infty, \phi \in D(A)$ and $\chi_S$ denotes the characteristic function of the set $S$. We shall use the identification of the tensor product and the space of vector-valued functions in the sequel. We will not distinguish notationally between the norm in $H_{\rho,k}(\R)$ and $H_{\rho,k}(\R)\otimes H$ for $k\in \{-1,0,1\}$. Moreover, we will also use the name $A$ for the extension of an operator to the tensor product.
\end{enumerate} \end{rem}

The operator $\partial_{0,\rho}^{-1}$ is a normal operator in $H_{\rho,0}(\R)$. Therefore, it admits a functional calculus, which may be generalized to operator-valued functions. Note that, similar to the forumlas for $\partial_{0,\rho}^{-1}$, the functional calculus depends on the sign of $\rho$, which we denote by $\sgn(\rho)$. We denote by $B_\mathbb{C}(x,r)$ the open ball in $\mathbb{C}$ around $x\in \mathbb{C}$ with radius $r \in \R_{>0}$.

\begin{defn}\label{def:funcalc} Let $\rho \in \R\setminus\{0\}$. Define $\mathbb{L}_{\rho}:=\mathcal{L}_{\rho}\otimes 1_{H}$.
Let $r>\frac{1}{2\abs{\rho}}$ and $M:B_{\mathbb{C}}(\sgn(\rho)r,r)\to L(H)$ be bounded
and analytic. Define \[
M\left(\partial_{0,\rho}^{-1}\right):=\mathbb{L}_{\rho}^{*}\: M\left(\frac{1}{\mathrm{i}  m+\rho}\right)\:\mathbb{L}_{\rho},\]
 where \[
M\left(\frac{1}{\mathrm{i}   m+\rho}\right)\phi(t):=M\left(\frac{1}{\mathrm{i}  t+\rho}\right)\phi(t)\quad(t\in\mathbb{R})\]
 for $\phi\in\interior C_{\infty}\left(\mathbb{R};H\right)$. \end{defn}
\vspace{0.5cm}
\begin{rem}\label{Rem: rho-Independent}~

\begin{enumerate}[(a)]
\item\label{Rem: rho-Independent0} It is easy to see that $\lVert M(\partial_{0,\rho}^{-1})\rVert_{L(H_{\rho,0}(\R)\otimes H, H_{\rho,0}(\R)\otimes H)}\leq \sup_{z\in B(\sgn(\rho)r,r)}\lVert M(z)\rVert_{L(H)}$.
\item\label{Rem: rho-Independent1} The definition of $M(\partial_{0,\rho}^{-1})$ is largely independent of
the choice of $\rho$ in the sense that the operators $M(\partial_{0,\rho_1}^{-1})$ and $M(\partial_{0,\rho_2}^{-1})$ for two different
parameters $\rho_{1},\rho_{2}$ such that $\rho_1\cdot\rho_2>0$ coincide on the intersection of the
respective domains. A proof of this statement can be found in \cite[Theorem 6.1.4]{PicMcG2011}
or \cite[Lemma 1.3.2]{Wau2011}.
\item\label{Rem: rho-Independent2} The operator $M(\partial_{0,\rho}^{-1})$ has a unique continuous extension to $H_{\rho,-1}(\R)\otimes H$, which will be denoted by $M(\partial_{0,\rho}^{-1})$ as well. More precisely, we may define
\begin{align*}
     M(\partial_{0,\rho}^{-1}) & : H_{\rho,-1}(\R)\otimes H \to H_{\rho,-1}(\R)\otimes H \\
                               & \phi \mapsto (H_{-\rho,1}(\R)\otimes H \ni \psi \mapsto \langle M(\partial_{0,\rho}^{-1})\partial_{0,\rho}^{-1}\phi, -\partial_{0,-\rho}\psi\rangle_{0,0}).
\end{align*}
This definition is justified by observing that $M(\partial_{0,\rho}^{-1})\partial_{0,\rho}^{-1} \phi =\partial_{0,\rho}^{-1} M(\partial_{0,\rho}^{-1}) \phi$ for all $\phi \in H_{\rho,0}(\R;H)$.
\end{enumerate}
\end{rem}
A prominent example of an analytic and bounded function of $\partial_{0,\rho}^{-1}$ is the delay operator, which itself is a special case of the time translation:
\begin{example}[Time translation]{\label{ex:time_translation}} Let $r\in \R_{>0}$, $\rho\in \R_{>1/2r}$, $h\in \R$ and $u\in H_{\rho,0}(\R)\otimes H$. We define
\[
   \tau_h u := u(\cdot + h).
\]
The operator $\tau_h \in L(H_{\rho,0}(\R)\otimes H,H_{\rho,0}(\R)\otimes H)$ is called \emph{time translation operator}. We note that $\tau_h \partial_{0,\rho}^{-1}=\partial_{0,\rho}^{-1}\tau_h$ holds. Therefore, there is a unique continuous extension of $\tau_h$ to the space $H_{\rho,-1}(\R)\otimes H$. We will use the same name for that extension. The operator norm of $\tau_h$ equals $\exp(h\rho)$. If $h<0$ the operator $\tau_h$ is also called a \emph{delay operator}. In that case the mapping
\[
  B_{\mathbb{C}}(r,r) \ni z \mapsto M(z):=\exp(z^{-1}h)1_H
\]
is analytic and uniformly bounded. An easy computation shows for $u \in H_{\rho,0}\otimes H$ that
\[
   \exp(\left(\partial_{0,\rho}^{-1}\right)^{-1}h)1_Hu=M(\partial_{0,\rho}^{-1})u = \mathbb{L}_\rho^* \exp((\mathrm{i}m+\rho)h) \mathbb{L}_\rho u = u(\cdot + h).
\]
\end{example}
Analytic, bounded, operator-valued functions of $\partial_{0,\rho}^{-1}$ have the following
properties, which can be found in \cite[Theorem 2.10]{Pi2009-1} or
\cite[Lemma 1.3.2]{Wau2011} (the link between the causality notions
is established in \cite[Lemma 1.2.3]{Wau2011}).
\begin{thm}\label{thm:funcalc}
Let $r\in\R_{>0}$, $H$ a Hilbert space and $M:B_{\mathbb{C}}(r,r)\to L(H)$
be an analytic, bounded mapping. Then for all $\rho\in \R_{>1/2r}$
the operator $M(\partial_{0,\rho}^{-1})$ is a causal (in the sense of \cite{Laksh10,PicMcG2011} also cp. Definition \ref{def:causality}), bounded linear operator in $H_{\rho,0}(\mathbb{R})\otimes H$.
\end{thm}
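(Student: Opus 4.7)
The plan is to split the proof into its two assertions---boundedness and causality---which rely on different aspects of the hypothesis.

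Boundedness is essentially immediate from Definition \ref{def:funcalc}. Since $\mathbb{L}_\rho$ is unitary from $H_{\rho,0}(\R)\otimes H$ onto $L^2(\R)\otimes H$, it suffices to estimate the multiplication operator $M(1/(\mathrm{i}m+\rho))$ on $L^2(\R)\otimes H$. The map $t\mapsto 1/(\mathrm{i}t+\rho)$ parametrizes the circle $\{z\in\mathbb{C};\,|z-1/(2\rho)|=1/(2\rho)\}$, which, thanks to the hypothesis $\rho>1/(2r)$, is contained in $B_{\mathbb{C}}(r,r)$. Consequently, the multiplier takes values in a set where $M$ is bounded, and one obtains $\lVert M(\partial_{0,\rho}^{-1})\rVert\le\sup_{z\in B_{\mathbb{C}}(r,r)}\lVert M(z)\rVert_{L(H)}$, as anticipated in Remark \ref{Rem: rho-Independent}(a).

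For causality, the plan is a Paley--Wiener argument. By Definition \ref{def:causality}, causality amounts to the statement that, for every $a\in\R$, the equality $u=0$ on $(-\infty,a]$ forces $M(\partial_{0,\rho}^{-1})u=0$ on $(-\infty,a]$. Translation invariance of both $\partial_{0,\rho}^{-1}$ and the $H_{\rho,0}$-norm (up to an explicit exponential factor) reduces the discussion to the case $a=0$. The vector-valued Paley--Wiener theorem, adapted to the weighted space $H_{\rho,0}(\R)\otimes H$, then characterizes such $u$ as precisely those whose Fourier--Laplace transform $\mathcal{L}_\rho u$ admits a holomorphic extension---still denoted $\mathcal{L}_\rho u$---to the right half-plane $\{s\in\mathbb{C};\,\Re s>\rho\}$ with uniform $L^2(\R;H)$-control along vertical lines.

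The decisive geometric observation is that the M\"obius map $s\mapsto 1/s$ sends $\{s\in\mathbb{C};\,\Re s>\rho\}$ biholomorphically onto the open disk $B_{\mathbb{C}}(1/(2\rho),1/(2\rho))$, which is contained in $B_{\mathbb{C}}(r,r)$ because $\rho>1/(2r)$. Since $M$ is holomorphic and bounded on $B_{\mathbb{C}}(r,r)$, the composition $s\mapsto M(1/s)$ is holomorphic and uniformly bounded on $\{\Re s>\rho\}$. Hence $s\mapsto M(1/s)\mathcal{L}_\rho u(s)$ inherits the holomorphic extension and uniform $L^2$-bounds of $\mathcal{L}_\rho u$, and reversing the Paley--Wiener characterization on $\mathcal{L}_\rho M(\partial_{0,\rho}^{-1})u=M(1/(\mathrm{i}m+\rho))\mathcal{L}_\rho u$ yields that $M(\partial_{0,\rho}^{-1})u$ vanishes on $(-\infty,0]$, as required.

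The principal obstacle is setting up the vector-valued, exponentially weighted Paley--Wiener characterization carefully, particularly because $H$ may be infinite-dimensional and $M$ is operator-valued, and matching conventions with Corollary \ref{cor:FLT}. A less elegant alternative would be to approximate $M$ uniformly on compact subsets of $B_{\mathbb{C}}(1/(2\rho),1/(2\rho))$ by polynomials in a suitably shifted variable and to exploit that any polynomial in $\partial_{0,\rho}^{-1}$ is manifestly causal, since $\partial_{0,\rho}^{-1}$ integrates from $-\infty$; closedness of the class of bounded causal operators under operator-norm limits would then transfer causality to $M(\partial_{0,\rho}^{-1})$. The Paley--Wiener route, however, is the one taken in the cited references \cite{Pi2009-1,Wau2011}.
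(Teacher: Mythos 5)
Your proposal is correct, and it follows essentially the same route as the proof the paper defers to in \cite{Pi2009-1} and \cite{Wau2011}: boundedness via unitarity of $\mathbb{L}_\rho$ together with the observation that $t\mapsto(\mathrm{i}t+\rho)^{-1}$ traces the circle $\partial B_{\mathbb{C}}(1/(2\rho),1/(2\rho))\subseteq B_{\mathbb{C}}(r,r)$, and causality via the (vector-valued, exponentially weighted) Paley--Wiener characterization of supports combined with the fact that $s\mapsto M(1/s)$ is bounded and holomorphic on $\{\Re s>\rho\}$, the M\"obius image of $B_{\mathbb{C}}(1/(2\rho),1/(2\rho))$. The only work left implicit---setting up the $H$-valued Paley--Wiener theorem and the translation-invariance reduction to $a=0$---is standard and is exactly what the cited references carry out.
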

We shall discuss other possible analytic functions of $\partial_{0,\rho}^{-1}$ in Example \ref{exam:funcalc}. 

\section{Basic Solution Theory}\label{sec:BST}
 Let $H$ be a Hilbert space. In the framework prepared in the previous section we consider equations of the form
\begin{equation}
\partial_{0,\rho}u=F(u).\label{eq:ode}
\end{equation}
Before we desribe the properties of $F$, we introduce a particular type of test function space and a particular type of distributions.
\begin{defn}
  Denoting by $\supp \phi$ the support of a function $\phi$, we define
\[
    \interior C_{\infty}^+\left(\mathbb{R};H\right):= \{ \phi \in C_{\infty}\left(\mathbb{R};H\right);\sup \supp \phi <\infty \text{, there is }n\in\mathbb{N} \text{ with }\phi^{(n)} \in \interior C_{\infty}\left(\mathbb{R};H\right) \}
\]
and
\[
    \interior C_{\infty}^+\left(\mathbb{R};H\right)' := \{ \phi : \interior C_{\infty}^+\left(\mathbb{R};H\right) \to \mathbb{K}; \phi \text{ linear}\}.
\]
\end{defn}
The right-hand side $F$ in \eqref{eq:ode} is assumed to be a mapping
\[
   F : \interior C_{\infty}\left(\mathbb{R};H\right) \to \interior C_{\infty}^+\left(\mathbb{R};H\right)'.
\]
Moreover, assume the existence of $\rho_0\in \R_{>0}$ and $s\in (0,1)$ such that for all $\rho\in \R_{>\rho_0}$ there is $K\in\R_{>0}$ such that for all $u,w\in \interior C_{\infty}\left(\mathbb{R};H\right)$ and $\psi \in \interior C_{\infty}^+\left(\mathbb{R};H\right)$ we have
\begin{equation}\label{eq:estimate}
    \abs{F(0)(\psi)}\leq K \abs{\psi}_{-\rho,1}
    \quad \text{and} \quad 
    \abs{F(u)(\psi)-F(w)(\psi)} \leq s \abs{\psi}_{-\rho,1} \abs{u-w}_{\rho,0}.
\end{equation}
It is straightforward that for all $\rho\in \R_{>\rho_0}$ the mapping $F$ possesses a unique Lipschitz continuous extension $F_\rho$ from $H_{\rho,0}(\R)\otimes H$ to $H_{\rho,-1}(\R)\otimes H$. The respective Lipschitz constant may be chosen strictly less than $1$.
In order to obtain well-posedness of \eqref{eq:ode}, the task is in finding $\rho$ such that \eqref{eq:ode} admits a unique solution $u$, which continuously depends on the right hand side $F$ in some adapted sense.  In this setting (\ref{eq:ode}) should hold in
$H_{\rho,-1}(\mathbb{R})\otimes H$ noting that $\partial_{0,\rho}$ can be regarded as the unitary operator from $H_{\rho,0}(\mathbb{R})\otimes H$ onto
$H_{\rho,-1}(\mathbb{R})\otimes H$ (Theorem \ref{thm:sobolev chain}). In this situation we arrive at the following result.

\begin{thm}[Picard-Lindel\"of]{\label{thm:Picard-Lindeloef}} Let $\rho_0\in \R_{>0}$, $s\in (0,1)$ and let $F:\interior C_{\infty}\left(\mathbb{R};H\right) \to \interior C_{\infty}^+\left(\mathbb{R};H\right)'$ be such that the estimates \eqref{eq:estimate} hold for all $\rho\in \R_{>\rho_0}$. Then for all $\rho \in \R_{>\rho_0}$ there exists a uniquely determined $u\in
H_{\rho,0}(\mathbb{R})\otimes H$ such that
\begin{equation*}
\partial_{0,\rho} u= F_\rho(u) \mbox{ in }H_{\rho,-1}(\mathbb{R})\otimes H.
\end{equation*}
\end{thm}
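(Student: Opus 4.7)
The plan is to reformulate the differential equation as a fixed-point equation and invoke Banach's contraction mapping theorem. The central observation is that Theorem \ref{thm:sobolev chain} makes $\partial_{0,\rho}$ a unitary map from $H_{\rho,0}(\mathbb{R})\otimes H$ onto $H_{\rho,-1}(\mathbb{R})\otimes H$, so its inverse is norm-preserving in the opposite direction. This allows us to convert the PDE-like equation $\partial_{0,\rho} u = F_\rho(u)$ into $u = \partial_{0,\rho}^{-1} F_\rho(u)$ without any loss in the Lipschitz constant.

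First I would fix $\rho > \rho_0$ and justify carefully the passage from the hypothesis on $F$ to the existence of the claimed extension $F_\rho: H_{\rho,0}(\mathbb{R})\otimes H \to H_{\rho,-1}(\mathbb{R})\otimes H$. The estimates in \eqref{eq:estimate} mean that, for fixed $u, w \in \interior C_\infty(\mathbb{R};H)$, the functional $\psi \mapsto F(u)(\psi) - F(w)(\psi)$ is bounded on $\interior C_\infty^+(\mathbb{R};H)$ by $s\,\abs{u-w}_{\rho,0}\,\abs{\psi}_{-\rho,1}$, and similarly $F(0)$ is bounded by $K\,\abs{\psi}_{-\rho,1}$. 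Since $\interior C_\infty^+(\mathbb{R};H)$ is dense in $H_{-\rho,1}(\mathbb{R})\otimes H$, each $F(u)$ extends uniquely to an element of the dual $H_{\rho,-1}(\mathbb{R})\otimes H$ (via the identification in Remark \ref{rem: Identification}), and by density of $\interior C_\infty(\mathbb{R};H)$ in $H_{\rho,0}(\mathbb{R})\otimes H$ together with uniform Lipschitz continuity, $F$ itself extends uniquely to $F_\rho$ with $\abs{F_\rho(u) - F_\rho(w)}_{\rho,-1} \le s\,\abs{u-w}_{\rho,0}$.

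Next I would define the solution operator
\[
   T : H_{\rho,0}(\mathbb{R})\otimes H \to H_{\rho,0}(\mathbb{R})\otimes H, \quad u \mapsto \partial_{0,\rho}^{-1} F_\rho(u),
\]
and verify that $T$ is a strict contraction. Indeed, for $u,w \in H_{\rho,0}(\mathbb{R})\otimes H$,
\[
   \abs{T(u) - T(w)}_{\rho,0} = \abs{\partial_{0,\rho}^{-1}(F_\rho(u)-F_\rho(w))}_{\rho,0} = \abs{F_\rho(u)-F_\rho(w)}_{\rho,-1} \le s \abs{u-w}_{\rho,0},
\]
using unitarity of $\partial_{0,\rho}^{-1}: H_{\rho,-1}(\mathbb{R})\otimes H \to H_{\rho,0}(\mathbb{R})\otimes H$. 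Since $s < 1$ and $H_{\rho,0}(\mathbb{R})\otimes H$ is a complete metric space, Banach's fixed point theorem yields a unique $u \in H_{\rho,0}(\mathbb{R})\otimes H$ with $u = T(u)$, and applying $\partial_{0,\rho}$ gives the required identity in $H_{\rho,-1}(\mathbb{R})\otimes H$.

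The only nontrivial step is really the careful bookkeeping in the first paragraph: checking that the duality pairing on $\interior C_\infty^+(\mathbb{R};H)$ agrees with the one coming from the Gelfand triple via Remark \ref{rem: Identification}, and that the Lipschitz constant $s$ is inherited by the extension. Once the extension $F_\rho$ with norm bound $s<1$ is in hand, everything else is the standard contraction argument in a Hilbert space, and in particular neither local-in-time approximation nor non-reflexive technicalities appear, which is exactly the advantage advertised in the introduction.
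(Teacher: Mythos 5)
Your argument is correct and is essentially the paper's own proof: both reformulate the equation as the fixed-point problem $u=\partial_{0,\rho}^{-1}F_\rho(u)$, use the unitarity of $\partial_{0,\rho}^{-1}:H_{\rho,-1}(\mathbb{R})\otimes H\to H_{\rho,0}(\mathbb{R})\otimes H$ from Theorem \ref{thm:sobolev chain} to see that the Lipschitz constant $s<1$ is preserved, and apply the contraction mapping theorem. The only difference is that you spell out the density/extension argument for $F_\rho$, which the paper disposes of as "straightforward" in the paragraph preceding the theorem.
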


\begin{proof} Let $\rho \in \R_{>\rho_0}$.
We consider the fixed point problem
\begin{equation}
u=\partial_{0,\rho}^{-1}F_\rho(u). {\label{eq:fixed point}}
\end{equation}
According to Theorem \ref{thm:sobolev chain} the operator $\partial_{0,\rho}^{-1}$ is unitary from
$H_{\rho,-1}(\mathbb{R})\otimes H$ to $H_{\rho,0}(\mathbb{R})\otimes H$. Since the Lipschitz constant of $F_\rho$ is strictly less than $1$, the contraction mapping theorem implies the existence of a unique $u\in H_{\rho,0}(\mathbb{R})\otimes
H$ satisfying (\ref{eq:fixed point}). For this fixed point it
follows that
\begin{equation*}
\partial_{0,\rho}u=F_\rho(u) \mbox{ in } H_{\rho,-1}(\mathbb{R})\otimes H.
\end{equation*}
The uniqueness of the solution follows immediately, since for any
element $x\in H_{\rho,0}(\mathbb{R})\otimes H$ satisfying
(\ref{eq:ode}) the fixed point equation (\ref{eq:fixed point})
holds as well. Since this fixed point is unique, we conclude $u=x$.
\end{proof}

If we discuss a particular class of equations of neutral type (see Theorem \ref{thm:neutral2}), it turns out that the respective function $F$ satisfies another estimate than \eqref{eq:estimate}. We consider other possible cases subsequently.

\begin{cor}{\label{cor:Picard-Lindeloef 2}}
Let $\rho_0 \in \R_{>0}$, $s\in (0,1)$ and let $F:\interior C_{\infty}\left(\mathbb{R};H\right) \to \interior C_{\infty}^+\left(\mathbb{R};H\right)'$ be such that for all $\rho\in \R_{>\rho_0}$, there is $K\in\R_{>0}$ such that for all $u,w\in \interior C_{\infty}\left(\mathbb{R};H\right)$ and $\psi \in \interior C_{\infty}^+\left(\mathbb{R};H\right)$ we have
\[
   \abs{F(0)(\psi)}\leq K\abs{\psi}_{-\rho,0}
   \quad \text{and} \quad 
   \abs{F(u)(\psi)-F(w)(\psi)} \leq s \abs{\psi}_{-\rho,0} \abs{u-w}_{\rho,1}.
\]
For $\rho \in \R_{>\rho_0}$ denote by $F_\rho: H_{\rho,1}(\mathbb{R})
\otimes H \to H_{\rho,0}(\mathbb{R})\otimes H$ the strictly contracting extension of $F$.
Then for all $\rho \in \R_{>\rho_0}$ there is a unique $u\in
H_{\rho,1}(\mathbb{R})\otimes H$ satisfying
\begin{equation*}
\partial_{0,\rho} u=F_\rho(u)
\end{equation*}
in $H_{\rho,0}(\mathbb{R})\otimes H$.
\end{cor}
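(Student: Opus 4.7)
The approach mirrors the proof of Theorem \ref{thm:Picard-Lindeloef}, except shifted by one in the Sobolev scale: the plan is to recast the equation as a fixed-point problem on $H_{\rho,1}(\mathbb{R})\otimes H$ and invoke the contraction mapping theorem.

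First I would confirm that the two estimates genuinely produce an extension $F_\rho$ with the advertised mapping property. The bound $|F(0)(\psi)|\leq K|\psi|_{-\rho,0}$ exhibits $F(0)$, initially only defined on test functions, as a continuous linear functional on a dense subspace of $H_{-\rho,0}(\mathbb{R})\otimes H$; via the identification of Remark \ref{rem: Identification} (the Riesz-type pairing $\langle\cdot,\cdot\rangle_{0,0}$ between $H_{\rho,0}(\mathbb{R})\otimes H$ and $H_{-\rho,0}(\mathbb{R})\otimes H$), this functional corresponds to an element of $H_{\rho,0}(\mathbb{R})\otimes H$. Combined with the Lipschitz inequality $|F(u)(\psi)-F(w)(\psi)|\leq s\,|\psi|_{-\rho,0}|u-w|_{\rho,1}$, the same identification yields $|F(u)-F(w)|_{\rho,0}\leq s\,|u-w|_{\rho,1}$ on test functions. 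Since $\interior{C}_\infty(\mathbb{R};H)$ is dense in $H_{\rho,1}(\mathbb{R})\otimes H$, $F$ admits a unique extension $F_\rho:H_{\rho,1}(\mathbb{R})\otimes H\to H_{\rho,0}(\mathbb{R})\otimes H$ that is Lipschitz continuous with constant strictly less than $1$.

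Second, I would rewrite $\partial_{0,\rho}u=F_\rho(u)$ as
\[
   u=\partial_{0,\rho}^{-1}F_\rho(u),
\]
which is meaningful because, by definition of the norm on $H_{\rho,1}(\mathbb{R})$ (and by Theorem \ref{thm:sobolev chain} applied at this level of the Sobolev chain), $\partial_{0,\rho}:H_{\rho,1}(\mathbb{R})\otimes H\to H_{\rho,0}(\mathbb{R})\otimes H$ is unitary, and hence so is $\partial_{0,\rho}^{-1}$ in the reverse direction. Consequently, the composition
\[
   \partial_{0,\rho}^{-1}\circ F_\rho : H_{\rho,1}(\mathbb{R})\otimes H \longrightarrow H_{\rho,1}(\mathbb{R})\otimes H
\]
is a strict contraction with Lipschitz constant bounded by $s<1$. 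The Banach fixed-point theorem delivers a unique $u\in H_{\rho,1}(\mathbb{R})\otimes H$ solving the fixed-point equation; applying the unitary $\partial_{0,\rho}$ to both sides recovers $\partial_{0,\rho}u=F_\rho(u)$ in $H_{\rho,0}(\mathbb{R})\otimes H$. Uniqueness is automatic since any solution of the differential equation is a fixed point.

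The only nontrivial issue is the bookkeeping of the correct dual pairing in the first step: one has to be careful that the estimate $|F(0)(\psi)|\leq K|\psi|_{-\rho,0}$ genuinely places $F(0)$ into $H_{\rho,0}(\mathbb{R})\otimes H$ (rather than into a distributional space at a different weight), and correspondingly that the Lipschitz estimate controls the $|\cdot|_{\rho,0}$-norm of $F(u)-F(w)$ in terms of the $|\cdot|_{\rho,1}$-norm of $u-w$. Once that identification is in place the argument is a verbatim transfer of the proof of Theorem \ref{thm:Picard-Lindeloef} to the triple $(H_{\rho,1},H_{\rho,0})$ in place of $(H_{\rho,0},H_{\rho,-1})$.
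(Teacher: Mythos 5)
Your proof is correct and takes essentially the same approach as the paper: where the paper conjugates with the unitary $\partial_{0,\rho}$ to define $G(v)=\partial_{0,\rho}F_\rho(\partial_{0,\rho}^{-1}v)$ and then quotes Theorem \ref{thm:Picard-Lindeloef} on $H_{\rho,0}(\mathbb{R})\otimes H$, you run the identical contraction argument directly on $H_{\rho,1}(\mathbb{R})\otimes H$ using that $\partial_{0,\rho}\colon H_{\rho,1}(\mathbb{R})\otimes H\to H_{\rho,0}(\mathbb{R})\otimes H$ is unitary by the definition of $\abs{\cdot}_{\rho,1}$ --- the two are unitarily equivalent formulations of the same fixed-point problem. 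Your additional care in checking that the estimates really place $F(u)-F(w)$ into $H_{\rho,0}(\mathbb{R})\otimes H$ via the $\langle\cdot,\cdot\rangle_{0,0}$ pairing is sound and merely makes explicit what the paper's statement of the corollary takes for granted.
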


\begin{proof}
Let $\rho \in \mathbb{R}_{>\rho_0}$. We consider the mapping
\begin{align*}
G: H_{\rho,0}(\mathbb{R})\otimes H & \to H_{\rho,-1}(\mathbb{R})\otimes H \\
                     v &\mapsto \partial_{0,\rho} F_\rho(\partial_{0,\rho}^{-1} v).
\end{align*}
Since the operators $\partial_{0,\rho}$ and $\partial_{0,\rho}^{-1}$ are
unitary, it follows that $G_\rho$ is strictly contracting. Thus, by Theorem
\ref{thm:Picard-Lindeloef} there exists a unique $v\in
H_{\rho,0}(\mathbb{R})\otimes H$ with
\begin{equation*}
\partial_{0,\rho} v=G(v)=\partial_{0,\rho} F_\rho(\partial_{0,\rho}^{-1}v)
\end{equation*}
which is equivalent to
\begin{equation*}
v=F_\rho(\partial_{0,\rho}^{-1}v).
\end{equation*}
By setting $u := \partial_{0,\rho}^{-1}v\in H_{\rho,1}(\mathbb{R})\otimes
H$, we obtain the desired solution of our differential equation.
The uniqueness is clear since any solution $x\in
H_{\rho,1}(\mathbb{R})\otimes H$ satisfies
\begin{equation*}
\partial_{0,\rho}x=F_\rho(x)=F_\rho(\partial_{0,\rho}^{-1}\partial_{0,\rho} x).
\end{equation*}
Hence, by the uniqueness of $v$ we obtain $\partial_{0,\rho}x=v$ and thus
$x=u$.
\end{proof}

If we impose a modified Lipschitz-type condition on $F$, it turns out that we gain
better regularity of the solution. It should be noted that in contrary to (\ref{eq:estimate}) this estimate does not impose a strict contractivity condition on $F$.

\begin{cor}{\label{cor:Picard-Lindeloef 3}}
Let $k\in \{0,1\}$, $\rho_0, C\in \R_{>0}$ and let $F: \interior C_{\infty}\left(\mathbb{R};H\right) \to \interior C_{\infty}^+\left(\mathbb{R};H\right)'$ be such that for all $\rho\in \R_{>\rho_0}$, there exists $K\in\R_{>0}$ such that for all $u,w\in \interior C_{\infty}\left(\mathbb{R};H\right)$ and $\psi \in \interior C_{\infty}^+\left(\mathbb{R};H\right)$ we have
\[
  \abs{F(0)(\psi)}\leq K\abs{\psi}_{-\rho,k}
  \quad\text{and}\quad
  \abs{F(u)(\psi) - F(w)(\psi)} \leq C \abs{\psi}_{-\rho,k}\abs{u-w}_{\rho,k}.
\]
For $\rho\in \R_{>\rho_0}$ we denote by $F_\rho: H_{\rho,k}(\mathbb{R}) \otimes H \to H_{\rho,k}(\mathbb{R})\otimes H$ the unique continuous extension of $F$.
Then for all $\rho\in \R_{>\max\{C,\rho_0\}}$ there is a unique $u\in H_{\rho,k+1}(\mathbb{R})\otimes H$ with
\begin{equation*}
\partial_{0,\rho} u=F_\rho(u) \mbox{ in } H_{\rho,k}(\mathbb{R})\otimes H.
\end{equation*}
\end{cor}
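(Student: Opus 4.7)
The plan is to recast the equation $\partial_{0,\rho} u = F_\rho(u)$ as the fixed point equation $u = \partial_{0,\rho}^{-1} F_\rho(u)$ in $H_{\rho,k}(\R) \otimes H$ and exploit the fact that $\partial_{0,\rho}^{-1}$ has small operator norm for large $\rho$ in order to turn the merely Lipschitz (but not contractive) map $F_\rho$ into a contraction. This is the natural variant of the strategy in Theorem \ref{thm:Picard-Lindeloef}, but exploiting a regularity gain from $\partial_{0,\rho}^{-1}$ rather than starting from a strict contraction. Crucially, because we extract the smallness from $\partial_{0,\rho}^{-1}$ rather than from $F_\rho$, the source and target spaces of the contraction agree and we do not need to move between different spaces of the Sobolev chain.

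The key preliminary step is to verify that $\partial_{0,\rho}^{-1}: H_{\rho,k}(\R) \otimes H \to H_{\rho,k}(\R) \otimes H$ has operator norm at most $1/\rho$ for $k \in \{0,1\}$. For $k=0$ this is Corollary \ref{cor:FLT}(d), together with the canonical extension to the tensor product. For $k=1$, the definition $|u|_{\rho,1} = |\partial_{0,\rho} u|_{\rho,0}$, applied to $\partial_{0,\rho}^{-1} u$ and combined with the $k=0$ bound, gives
\[
   |\partial_{0,\rho}^{-1} u|_{\rho,1} = |\partial_{0,\rho}\partial_{0,\rho}^{-1} u|_{\rho,0} = |u|_{\rho,0} = |\partial_{0,\rho}^{-1}\partial_{0,\rho} u|_{\rho,0} \leq \tfrac{1}{\rho} |\partial_{0,\rho} u|_{\rho,0} = \tfrac{1}{\rho} |u|_{\rho,1}.
\]

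With this in hand, fix $\rho > \max\{C, \rho_0\}$ and consider
\[
   \Phi : H_{\rho,k}(\R)\otimes H \to H_{\rho,k}(\R)\otimes H, \quad u \mapsto \partial_{0,\rho}^{-1} F_\rho(u).
\]
By the Lipschitz assumption and the preceding bound,
\[
   |\Phi(u) - \Phi(w)|_{\rho,k} \leq \tfrac{1}{\rho}|F_\rho(u) - F_\rho(w)|_{\rho,k} \leq \tfrac{C}{\rho}|u - w|_{\rho,k},
\]
with $C/\rho < 1$, so $\Phi$ is a strict contraction. Banach's fixed point theorem yields a unique $u \in H_{\rho,k}(\R)\otimes H$ with $u = \partial_{0,\rho}^{-1}F_\rho(u)$. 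Since $F_\rho(u) \in H_{\rho,k}(\R)\otimes H$ and $\partial_{0,\rho}^{-1}$ maps $H_{\rho,k}(\R)\otimes H$ into $H_{\rho,k+1}(\R)\otimes H$ (a direct continuation of the unitary identifications in Theorem \ref{thm:sobolev chain}), the identity $u = \partial_{0,\rho}^{-1}F_\rho(u)$ shows $u \in H_{\rho,k+1}(\R)\otimes H$ and $\partial_{0,\rho} u = F_\rho(u)$ in $H_{\rho,k}(\R)\otimes H$.

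For uniqueness, any $x \in H_{\rho,k+1}(\R)\otimes H$ solving $\partial_{0,\rho} x = F_\rho(x)$ in $H_{\rho,k}(\R)\otimes H$ also satisfies $x = \partial_{0,\rho}^{-1}F_\rho(x)$ and is therefore a fixed point of $\Phi$, hence equals $u$. The only real subtlety is the norm estimate for $\partial_{0,\rho}^{-1}$ on $H_{\rho,1}(\R)\otimes H$; once this is in place, the result is an immediate application of the contraction principle, which also accounts for why the constant $C$ is allowed to be arbitrary at the cost of taking $\rho$ larger.
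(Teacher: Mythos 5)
Your proof is correct, and it takes a slightly different (and more direct) route than the paper. The paper extracts the factor $\rho^{-1}$ on the test-function side, via $\abs{\psi}_{-\rho,k}=\abs{\partial_{0,-\rho}^{-1}\partial_{0,-\rho}\psi}_{-\rho,k}\leq\rho^{-1}\abs{\psi}_{-\rho,k+1}$, so that $F_\rho$ \emph{reinterpreted as a map into the lower-index space} $H_{\rho,k-1}(\R)\otimes H$ becomes a strict contraction; it then feeds this into Theorem \ref{thm:Picard-Lindeloef} for $k=0$ and into Corollary \ref{cor:Picard-Lindeloef 2} for $k=1$ (the latter itself involving a conjugation by $\partial_{0,\rho}$), and finally bootstraps the regularity exactly as you do. You instead keep $F_\rho$ as a self-map of $H_{\rho,k}(\R)\otimes H$ and extract the same factor $\rho^{-1}$ from the operator norm of $\partial_{0,\rho}^{-1}$ on $H_{\rho,k}(\R)\otimes H$ (your computation of the norm bound on $H_{\rho,1}$ from the unitarity $\abs{\partial_{0,\rho}^{-1}u}_{\rho,1}=\abs{u}_{\rho,0}$ combined with Corollary \ref{cor:FLT}(d) is exactly right), so that Banach's fixed point theorem applies directly and uniformly in $k$, with no case distinction and no appeal to the earlier existence results. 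The two arguments hinge on the same $\rho^{-1}$ gain, but yours is self-contained and avoids the detour through the Sobolev chain indices; the paper's version has the merit of reusing the already-established machinery and of making explicit that the equation still holds in the distributional sense in $H_{\rho,k-1}(\R)\otimes H$ before the regularity is upgraded. One small point worth making explicit in your write-up: for $k=1$ the conclusion $u\in H_{\rho,2}(\R)\otimes H$ tacitly uses the higher rungs of the Sobolev chain of \cite{PicMcG2011} (the paper's statement does the same), i.e., that $\partial_{0,\rho}^{-1}$ maps $H_{\rho,1}(\R)\otimes H$ into $H_{\rho,2}(\R)\otimes H$.
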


\begin{proof} Let $\rho \in \R_{>\max\{C,\rho_0\}}$. For $\psi \in \interior C_{\infty}^+\left(\mathbb{R};H\right)$, we observe that by Theorem \ref{thm:sobolev chain}
\[
 \abs{\psi}_{-\rho,k}=\abs{\partial_{0,-\rho}^{-1}\partial_{0,-\rho}\psi}_{-\rho,k}\leq \frac1{\rho}\abs{\partial_{0,-\rho}\psi}_{-\rho,k}=\frac1\rho\abs{\psi}_{-\rho,k+1}.
\]
Consequently, $F_\rho$ considered as a mapping with values in $H_{\rho,k-1}(\R)\otimes H$ is strictly contracting.
Thus, we are in the situation of Theorem \ref{thm:Picard-Lindeloef} if $k=0$ or
in the situation of Corollary \ref{cor:Picard-Lindeloef 2} in the
case $k=1$. Hence, we find a unique $u\in
H_{\rho,k}(\mathbb{R})\otimes H$ with
\begin{equation*}
\partial_{0,\rho}u=F_\rho(u).
\end{equation*}
Since $F_\rho(u)\in H_{\rho,k}(\mathbb{R})\otimes H$, it follows that
indeed $u\in H_{\rho,k+1}(\mathbb{R})\otimes H$ and the equation
even holds in $H_{\rho,k}(\mathbb{R})\otimes H$.
\end{proof}

\begin{rem}\label{rem: rho_Inde}
It should be noted that the solution of (\ref{eq:ode}) seems to
depend on the choice of the parameter $\rho\in \mathbb{R}_{>\rho_0}$.
This however is not the case and will be shown in Theorem \ref{thm: rho-Independent}.
\end{rem}
Now we show the continuous dependence of our solution $u$ of
(\ref{eq:ode}) on the function $F$ with respect to a suitable
topology. For a Lipschitz continuous mapping $F : H_{\rho,0}(\mathbb{R})\otimes H \to
H_{\rho,-1}(\mathbb{R})\otimes H$ we denote the best Lipschitz constant of $F$ by $|F|_{\mathrm{Lip}}$.
\begin{thm}{\label{thm:continuous dependence}}
Let $\rho \in \mathbb{R}_{>0}$ and
\begin{equation*}
F,G: H_{\rho,0}(\mathbb{R})\otimes H \to
H_{\rho,-1}(\mathbb{R})\otimes H
\end{equation*}
be two Lipschitz-continuous mappings with
\begin{equation*}
\frac{|F|_{\mathrm{Lip}}+|G|_{\mathrm{Lip}}}{2} < 1.
\end{equation*}
Furthermore let $u,v\in H_{\rho,0}(\mathbb{R})\otimes H$ with
\begin{equation*}
\partial_{0,\rho}u=F(u) \mbox{ and } \partial_{0,\rho} v=G(v).
\end{equation*}
Then
\begin{equation*}
|u-v|_{\rho,0} \leq
\frac{1}{1-\frac{|F|_{\mathrm{Lip}}+|G|_{\mathrm{Lip}}}{2}}
\sup_{x\in H_{\rho,0}(\mathbb{R})\otimes H}
|F(x)-G(x)|_{\rho,-1}.
\end{equation*}
\end{thm}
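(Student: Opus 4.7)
The plan is to recast both differential equations as fixed-point identities and then exploit the isometric nature of $\partial_{0,\rho}^{-1}$ between the Sobolev-chain spaces. By Theorem \ref{thm:sobolev chain}, the operator $\partial_{0,\rho}^{-1}\colon H_{\rho,-1}(\R)\otimes H \to H_{\rho,0}(\R)\otimes H$ is unitary, so the hypotheses $\partial_{0,\rho}u = F(u)$ and $\partial_{0,\rho}v = G(v)$ are equivalent to $u = \partial_{0,\rho}^{-1}F(u)$ and $v = \partial_{0,\rho}^{-1}G(v)$ respectively. Subtracting these identities and invoking the isometry yields
\[
\abs{u-v}_{\rho,0} \;=\; \abs{\partial_{0,\rho}^{-1}\bigl(F(u)-G(v)\bigr)}_{\rho,0} \;=\; \abs{F(u)-G(v)}_{\rho,-1}.
\]
It therefore suffices to estimate $\abs{F(u)-G(v)}_{\rho,-1}$ in terms of $\abs{u-v}_{\rho,0}$ and $\sup_x \abs{F(x)-G(x)}_{\rho,-1}$.

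The key step is to choose a decomposition of $F(u)-G(v)$ that treats $F$ and $G$ symmetrically, so that the arithmetic mean $\tfrac12(\abs{F}_{\mathrm{Lip}}+\abs{G}_{\mathrm{Lip}})$ shows up rather than $\max\{\abs{F}_{\mathrm{Lip}},\abs{G}_{\mathrm{Lip}}\}$. I would use the averaged splitting
\begin{align*}
F(u) - G(v) &= \tfrac12\bigl(F(u)-F(v)\bigr) + \tfrac12\bigl(G(u)-G(v)\bigr) \\
 & \quad {} + \tfrac12\bigl(F(v)-G(v)\bigr) + \tfrac12\bigl(F(u)-G(u)\bigr),
\end{align*}
which is immediately seen to be an algebraic identity. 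Applying the triangle inequality together with the Lipschitz bounds for $F$ and $G$ to the first two summands, and bounding each of the last two by the supremum, one obtains
\[
\abs{F(u)-G(v)}_{\rho,-1} \;\leq\; \frac{\abs{F}_{\mathrm{Lip}}+\abs{G}_{\mathrm{Lip}}}{2}\,\abs{u-v}_{\rho,0} + \sup_{x\in H_{\rho,0}(\R)\otimes H}\abs{F(x)-G(x)}_{\rho,-1}.
\]

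Combining this with the identity from the first paragraph and rearranging produces the announced inequality; the absorption of the $\abs{u-v}_{\rho,0}$-term on the left-hand side is legitimate precisely because the hypothesis $\tfrac12(\abs{F}_{\mathrm{Lip}}+\abs{G}_{\mathrm{Lip}})<1$ renders the corresponding coefficient strictly less than one. I do not anticipate any genuine technical obstacle here: the only non-routine aspect is noticing the symmetric decomposition above, since the naive split $F(u)-G(v) = \bigl(F(u)-F(v)\bigr) + \bigl(F(v)-G(v)\bigr)$ would yield only $\abs{F}_{\mathrm{Lip}}$ (or, dually, $\abs{G}_{\mathrm{Lip}}$) on the right-hand side instead of the arithmetic mean.
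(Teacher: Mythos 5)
Your proof is correct and follows essentially the same route as the paper: the same symmetric averaged decomposition of $F(u)-G(v)$ into Lipschitz terms and supremum terms, followed by absorption of the $\abs{u-v}_{\rho,0}$-term using $\tfrac12(\abs{F}_{\mathrm{Lip}}+\abs{G}_{\mathrm{Lip}})<1$. The only cosmetic difference is that you pass through the identity $\abs{u-v}_{\rho,0}=\abs{F(u)-G(v)}_{\rho,-1}$ before splitting, whereas the paper splits first and then applies the unitary $\partial_{0,\rho}^{-1}$ termwise.
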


\begin{proof}
For $u$ and $v$ we compute in $H_{\rho,0}(\R)\otimes H$
\begin{align*}
u-v =& \partial_{0,\rho}^{-1} F(u)-\partial_{0,\rho}^{-1} G(v) \\
=& \frac{1}{2} \partial_{0,\rho}^{-1} (F(u)-F(v)) - \frac{1}{2}
\partial_{0,\rho}^{-1} (G(v)-F(v)) +\frac{1}{2} \partial_{0,\rho}^{-1}
F(u)-\frac{1}{2} \partial_{0,\rho}^{-1} G(v) \\
=& \frac{1}{2} \partial_{0,\rho}^{-1} (F(u)-F(v))
+\frac{1}{2}\partial_{0,\rho}^{-1}(G(u)-G(v)) -\frac{1}{2}\partial_{0,\rho}^{-1}
(G(v)-F(v)) +\frac{1}{2}\partial_{0,\rho}^{-1} (F(u)-G(u)).
\end{align*}
The latter yields
\begin{equation*}
|u-v|_{\rho,0} \leq
\frac{1}{2}(|F|_{\mathrm{Lip}}+|G|_{\mathrm{Lip}})|u-v|_{\rho,0} + \sup_{x\in H_{\rho,0}(\mathbb{R})\otimes H}
|F(x)-G(x)|_{\rho,-1}
\end{equation*}
and thus
\begin{equation*}
|u-v|_{\rho,0} \leq
\frac{1}{1-\frac{|F|_{\mathrm{Lip}}+|G|_{\mathrm{Lip}}}{2}}
\sup_{x\in H_{\rho,0}(\mathbb{R})\otimes H}
|F(x)-G(x)|_{\rho,-1}. \qedhere
\end{equation*}
\end{proof}

\begin{rem}\label{rem:contdep}~

 \begin{enumerate}[(a)]
  \item{\label{rem0}} In the case of
\begin{equation*}
 F,G:H_{\rho,1}(\mathbb{R})\otimes H \to H_{\rho,0}(\mathbb{R})\otimes H
\end{equation*}
with
\begin{equation*}
 \frac{|F|_{\mathrm{Lip}}+|G|_{\mathrm{Lip}}}{2} < 1
\end{equation*}
we consider the mappings $\tilde{F},\tilde{G}$ given by $\tilde{F}(v)=\partial_{0,\rho} F(\partial_{0,\rho}^{-1}v)$ and
$\tilde{G}(v)=\partial_{0,\rho} G(\partial_{0,\rho}^{-1}v)$ for $v\in H_{\rho,0}(\mathbb{R})\otimes H$. These mappings are again Lipschitz continuous from $H_{\rho,0}(\mathbb{R})\otimes H$ to $H_{\rho,-1}(\mathbb{R})\otimes H$ with
\begin{equation*}
 |\tilde{F}|_{\mathrm{Lip}}=|F|_{\mathrm{Lip}} 
 \quad\text{and}\quad
 |\tilde{G}|_{\mathrm{Lip}}=|G|_{\mathrm{Lip}}.
\end{equation*}
 Thus we have
\begin{equation*}
 |\tilde{u}-\tilde{v}|_{\rho,0} \leq
\frac{1}{1-\frac{|\tilde{F}|_{\mathrm{Lip}}+|\tilde{G}|_{\mathrm{Lip}}}{2}}
\sup_{x\in H_{\rho,0}(\mathbb{R})\otimes H}
|\tilde{F}(x)-\tilde{G}(x)|_{\rho,-1}
\end{equation*}
for $\partial_{0,\rho} \tilde{u}=\tilde{F}(\tilde{u})$ and $\partial_{0,\rho} \tilde{v}=\tilde{G}(\tilde{v})$ by Theorem \ref{thm:continuous dependence}. Since $u:=\partial_{0,\rho}^{-1}\tilde{u}$ and $v:=\partial_{0,\rho}^{-1}\tilde{v}$ satisfy $\partial_{0,\rho}u=F(u)$ and $\partial_{0,\rho} v=G(v)$ we conclude that, by using the unitarity of $\partial_{0,\rho}^{-1}$
\begin{equation*}
 |u-v|_{\rho,1} \leq
\frac{1}{1-\frac{|F|_{\mathrm{Lip}}+|G|_{\mathrm{Lip}}}{2}}
\sup_{x\in H_{\rho,1}(\mathbb{R})\otimes H}
|F(x)-G(x)|_{\rho,0}.
\end{equation*}
\item If
\begin{equation*}
 F,G:H_{\rho,k}(\mathbb{R})\otimes H \to H_{\rho,k}(\mathbb{R})\otimes H
\end{equation*}
with
\begin{equation*}
 \frac{|F|_{\mathrm{Lip}}+|G|_{\mathrm{Lip}}}{2} < \rho
\end{equation*}
for $k\in \{0,1\}$ we consider the functions
\begin{equation*}
 \tilde{F},\tilde{G} : H_{\rho,k}(\mathbb{R})\otimes H \to H_{\rho,k-1}(\mathbb{R})\otimes H
\end{equation*}
with $\tilde{F}(x)=F(x)$ and $\tilde{G}(x)=G(x)$ for $x\in H_{\rho,k}(\mathbb{R})\otimes H$. Then we can estimate the Lipschitz-constants of $\tilde{F}$ and $\tilde{G}$ by
$\rho^{-1}|F|_{\mathrm{Lip}}$ and $\rho^{-1}|G|_{\mathrm{Lip}}$, respectively. Thus we conclude
 \begin{equation*}
 \frac{|\tilde{F}|_{\mathrm{Lip}}+|\tilde{G}|_{\mathrm{Lip}}}{2} < 1
\end{equation*}
and hence, we can apply Theorem \ref{thm:continuous dependence} in the case $k=0$ and Remark \ref{rem:contdep}\eqref{rem0} in the case $k=1$. Thus for $u,v\in H_{\rho,k}(\mathbb{R})\otimes H$ with $\partial_{0,\rho}u=F(u)$ and $\partial_{0,\rho}v=G(v)$ we get
\begin{align*}
 |u-v|_{\rho,k} &\leq \frac{1}{1-\frac{|\tilde{F}|_{\mathrm{Lip}}+|\tilde{G}|_{\mathrm{Lip}}}{2}}
\sup_{x\in H_{\rho,k}(\mathbb{R})\otimes H}
|\tilde{F}(x)-\tilde{G}(x)|_{\rho,k-1}\\
&\leq \frac{1}{\rho-\frac{|F|_{\mathrm{Lip}}+|G|_{\mathrm{Lip}}}{2}}
\sup_{x\in H_{\rho,k}(\mathbb{R})\otimes H}
|F(x)-G(x)|_{\rho,k}.
\end{align*}
\end{enumerate}
\end{rem}

Another common case is that $F$ has two arguments, i.e.,
\begin{align*}
F: (H_{\rho,0}(\mathbb{R})\otimes H) \oplus (H_{\rho,-1}(\mathbb{R})\otimes H) &\to H_{\rho,-1}(\mathbb{R})\otimes H \\
(u,f) &\mapsto F(u,f),
\end{align*}
where the second argument $f$ should be interpreted as a certain source term, cf.\ Theorem \ref{thm:ivode}. Then we arrive at the problem of finding $u\in H_{\rho,0}(\mathbb{R})\otimes H$ such that
\begin{equation}{\label{eq:ode source term}}
 \partial_{0,\rho} u=F(u,f)
\end{equation}
for a fixed $f\in H_{\rho,-1}(\mathbb{R})\otimes H$. We now define what it means for (\ref{eq:ode source term}) to be autonomous.

\begin{defn}
An ordinary differential equation of the form (\ref{eq:ode source term})
 is called \emph{autonomous} if $F$ commutes with time translation,
i.e.,\ for all $u\in H_{\rho,0}(\mathbb{R})\otimes H,f\in H_{\rho,-1}(\mathbb{R})\otimes H$ and $h\in \mathbb{R}$  we have
\begin{equation*}
 F(\tau_h u, \tau_h f)=\tau_h F(u,f).
\end{equation*}
\end{defn}

\begin{rem}
In the situation of an autonomous equation (\ref{eq:ode source term}) it follows that for a solution $u\in H_{\rho,0}(\mathbb{R})\otimes H$ and for each $h\in \mathbb{R}$
\begin{equation*}
 \partial_{0,\rho}\tau_h u=\tau_h \partial_{0,\rho} u= \tau_h F(u,f)= F(\tau_h u,\tau_h f).
\end{equation*}
This means that the translated solution solves the equation for the translated source term $f$.
\end{rem}

\section{Causality and Memory }\label{Sec: CausMem}

\subsection{Causal Solution Operators}

If a solution of an evolutionary problem up to time $a$ only depends on the equation up to time $a$, then the solution operator is called causal or nonanticipative. Already Volterra implicitely used nonanticipative operators in his work on 	integral equations. Later Tychonoff made contributions in developing the theory of functional equations involving causal operators, cp.\ also \cite{Devi2011,Laksh10}. We consider equations of the form (\ref{eq:ode}) with time on the whole real line. In this setting causality is a natural property of the solution operator. At first we give a definition of causality in our framework.

\begin{defn}{\label{def:causality}}
Let $X,Y$ be Hilbert spaces, $\rho\in \R$. A mapping
\begin{equation*}
W:D\left(W\right)\subseteq H_{\rho,0}(\mathbb{R})\otimes X\to H_{\rho,0}(\mathbb{R})\otimes Y
\end{equation*}
is called \emph{causal} if for all $a\in\mathbb{R},\, x,y\in D\left(W\right)$\footnote{For a Hilbert space $H$ and a bounded, measurable function $\phi:\R\to \R$, we denote
\begin{equation*}
 (\phi(m_0)f)(t):=\phi(t)f(t)\quad (t\in \mathbb{R},f\in H_{\rho,0}(\mathbb{R})\otimes H).
\end{equation*}} 
\begin{equation*}
\left(\chi_{\R_{<a}}\left(m_{0}\right)\left(x-y\right)=0\implies\chi_{\R_{<a}}\left(m_{0}\right)\:\left(W\left(x\right)-W\left(y\right)\right)=0\right).
\end{equation*}
\end{defn}

\begin{rem}~

 \begin{enumerate}[(a)]
\item An equivalent formulation of causality is the following, cf.\ also \cite{Thomas1997,Weiss2003}.
A mapping \[
W:D\left(W\right)\subseteq H_{\rho,0}(\mathbb{R})\otimes X\to H_{\rho,0}(\mathbb{R})\otimes Y,\]
is causal, if for all $a\in \mathbb{R}$
\begin{equation*}
 \chi_{\R_{<a}}(m_0)W=\chi_{\R_{<a}}(m_0)W \chi_{\R_{<a}}(m_0).
\end{equation*}
\item If $\rho\neq 0$, then it is immediate from the formulas in Theorem \ref{thm:sobolev chain} that $\partial_{0,\rho}^{-1}$ is causal if and only if $\rho>0$.
\end{enumerate}
\end{rem}

It is remarkable that causality is actually implied by the uniform Lipschitz continuity we required in Theorem \ref{thm:Picard-Lindeloef}. In order to prove this fact, i.e., Theorem \ref{thm:causality}, we need a definition.
\begin{defn}
  Let $w \in \interior C_{\infty}^+\left(\mathbb{R};H\right)'$. Then define
 \[
     \int_{-\infty}^\cdot w : \interior C_{\infty}^+\left(\mathbb{R};H\right) \to \mathbb{R} : \psi \mapsto w(\int_{\cdot}^\infty \psi).
 \]
\end{defn}
\begin{rem} Let $\rho \in \R_{>0}$. We choose to identify $w \in \interior C_{\infty}^+\left(\mathbb{R};H\right)'$, for which there exists $C\in \R_{>0}$ such that for all $\psi\in \interior C_\infty^+(\R;H)$ it holds $\abs{w(\psi)}\leq C \abs{\psi}_{-\rho,1}$, with an element of $H_{\rho,-1}(\R)\otimes H$ by appropriate continuous extension. Then we have the equality
\[
   \int_{-\infty}^\cdot w  = \partial_{0,\rho}^{-1} w \in H_{-\rho,0}(\R)^*\otimes H \cong H_{\rho,0}(\R)\otimes H.
\]
Indeed, let $\psi \in \interior C_{\infty}^+\left(\mathbb{R};H\right)$. Then we have
\[
   \int_{-\infty}^\cdot w (\psi) = w(\int_{\cdot}^\infty \psi) = w(-\partial_{0,-\rho}^{-1}\psi) = \partial_{0,\rho}^{-1} w(\psi).
\]
\end{rem}

\begin{thm}{\label{thm:causality}}
With the assumptions and the notation from Theorem \ref{thm:Picard-Lindeloef}, we have that for all $\rho \in \mathbb{R}_{>\rho_0}$ the mapping $\partial_{0,\rho}^{-1}F_\rho$ is causal as a mapping from $H_{\varrho,0}(\R)\otimes H$ to $H_{\varrho,0}(\R)\otimes H$.
\end{thm}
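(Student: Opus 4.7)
The proof rests on the uniformity of the Lipschitz estimate \eqref{eq:estimate}: the same constant $s<1$ serves every $\rho>\rho_0$. The plan is to establish first a localized vanishing statement---namely, that whenever $x,y\in H_{\rho,0}(\R)\otimes H$ agree on $(-\infty,a)$, then $F_\rho(x)-F_\rho(y)$ annihilates every $\psi\in\interior C_\infty^+(\R;H)$ with $\sup\supp\psi\leq a$---and then to translate this via the duality between $\partial_{0,\rho}^{-1}$ and $\partial_{0,-\rho}^{-1}$ into causality of $\partial_{0,\rho}^{-1}F_\rho$.

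The core of the argument is the prototype case in which $u,w,\psi\in\interior C_\infty(\R;H)$ are smooth and compactly supported, with $\supp(u-w)\subseteq[a+\delta,\infty)$ and $\supp\psi\subseteq(-\infty,a-\epsilon]$ for some $\delta,\epsilon>0$. Because the weight $e^{-2\rho' t}$ is maximised on $[a+\delta,\infty)$ at the left endpoint and $e^{2\rho' t}$ is maximised on $(-\infty,a-\epsilon]$ at the right endpoint, the elementary bounds
\[
  \abs{u-w}_{\rho',0}\leq e^{-\rho'(a+\delta)}\Abs{u-w}_{L^2(\R;H)},\qquad
  \abs{\psi}_{-\rho',1}\leq e^{\rho'(a-\epsilon)}\Abs{\psi'}_{L^2(\R;H)}
\]
are at hand. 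Plugging them into \eqref{eq:estimate} yields $|F(u)(\psi)-F(w)(\psi)|\leq s\,e^{-\rho'(\delta+\epsilon)}\Abs{u-w}_{L^2}\Abs{\psi'}_{L^2}$, which tends to $0$ as $\rho'\to\infty$; hence $F(u)(\psi)=F(w)(\psi)$.

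To extend this equality to arbitrary $x,y\in H_{\rho,0}(\R)\otimes H$ with $\chi_{\R_{<a}}(m_0)(x-y)=0$ and general $\psi\in\interior C_\infty^+(\R;H)$ with $\sup\supp\psi\leq a$, I perform two density steps. Firstly, $\psi$ is approximated in $H_{-\rho,1}(\R)\otimes H$ by shifted cut-offs $\chi_R\cdot\tau_h\psi$ for $h\to0^+$ and $R\to\infty$, where $\chi_R$ is a smooth cut-off supported in $[-R-1,\infty)$; the shift $h>0$ secures $\sup\supp(\chi_R\tau_h\psi)\leq a-h$ so that the core step applies, and the polynomial growth of $\psi$ at $-\infty$ is harmless because the weight $e^{2\rho t}$ drives convergence. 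Secondly, writing $z:=\chi_{\R_{<a}}(m_0)x=\chi_{\R_{<a}}(m_0)y$, $\xi:=x-z$, $\eta:=y-z$, I approximate $z$ by $z_n\in\interior C_\infty(\R;H)$ and $\xi,\eta$ by $\xi_n,\eta_n\in\interior C_\infty(\R;H)$ with $\supp\xi_n,\supp\eta_n\subseteq[a+\tfrac1n,\infty)$. Setting $u_n:=z_n+\xi_n$, $w_n:=z_n+\eta_n$, the first extension gives $F(u_n)(\psi)=F(w_n)(\psi)$, and Lipschitz continuity of $F_\rho:H_{\rho,0}(\R)\otimes H\to H_{\rho,-1}(\R)\otimes H$ passes this to the limit $(F_\rho(x)-F_\rho(y))(\psi)=0$.

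Finally, for $\phi\in\interior C_\infty(\R;H)$ with $\supp\phi\subseteq(-\infty,a)$ the duality formula of Remark \ref{rem: dualdescr}(a) together with Corollary \ref{cor:FLT} gives
\[
  \langle\partial_{0,\rho}^{-1}(F_\rho(x)-F_\rho(y)),\phi\rangle_{0,0}=(F_\rho(x)-F_\rho(y))\bigl(-\partial_{0,-\rho}^{-1}\phi\bigr),
\]
and $-\partial_{0,-\rho}^{-1}\phi(t)=\int_t^\infty\phi(s)\,ds$ belongs to $\interior C_\infty^+(\R;H)$ with support in $(-\infty,a]$. By the preceding extension the right-hand side vanishes for every such $\phi$, and density of such $\phi$ in $\chi_{\R_{<a}}(m_0)(H_{\rho,0}(\R)\otimes H)$ yields $\chi_{\R_{<a}}(m_0)\partial_{0,\rho}^{-1}(F_\rho(x)-F_\rho(y))=0$, which is causality. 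The main technical obstacle is the first extension step: since $\interior C_\infty^+$ permits polynomial growth at $-\infty$, a plain truncation does not reduce to the core estimate, and the combination of cut-off with a small positive shift is needed to restore the strictly separated supports that drive the $\rho'\to\infty$ decay.
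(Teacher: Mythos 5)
Your proof is correct and rests on the same central mechanism as the paper's: the Lipschitz estimate \eqref{eq:estimate} holds with the same constant for every weight $\rho>\rho_0$, so letting the weight tend to $+\infty$ annihilates the pairing between a perturbation supported in $[a,\infty)$ and a test function supported in $(-\infty,a]$. The paper runs this limit directly on $\partial_{0,\rho}^{-1}F_\rho(v)(\psi)=F_\eta(v)(-\partial_{0,-\eta}^{-1}\psi)$ for smooth $v$ and passes to $\chi_{\R_{<a}}(m_0)v$ by continuity, whereas you first localize $F$ itself on strictly separated supports (gaining the decay factor $e^{-\rho'(\delta+\epsilon)}$) and then remove the separation and the smoothness by density before composing with $\partial_{0,\rho}^{-1}$; the two routes differ only in bookkeeping.
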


\begin{proof} Let $a\in \R$, $\rho \in \R_{>\rho_0}$ and let $\phi \in C_\infty(\R)$ be bounded. Now, let $v \in  \interior C_{\infty}\left(\mathbb{R};H\right)$ and $\psi \in \interior C_{\infty}\left(\mathbb{R};H\right)$ be such that $\sup \supp \psi \leq a$. For $\eta \in \R_{\geq \rho}$ we compute
\begin{align*}
    \abs{\partial_{0,\rho}^{-1} F_\rho (v)(\psi) -  \partial_{0,\rho}^{-1}F_\rho(\phi(m_0) v)(\psi)} & = \abs{\int_{-\infty}^\cdot F(v)(\psi) - \int_{-\infty}^\cdot F(\phi(m_0) v)(\psi)} \\
    & = \abs{\partial_{0,\eta}^{-1} F_\eta (v)(\psi) -  \partial_{0,\eta}^{-1}F_\eta(\phi(m_0) v)(\psi)} \\
    & = \abs{F_\eta(v)(-\partial_{0,-\eta}^{-1}\psi)-F_\eta(\phi(m_0) v)(-\partial_{0,-\eta}^{-1} \psi)} \\
    & \leq \abs{-\partial_{0,-\eta}^{-1} \psi }_{-\eta,1}\abs{ v- \phi(m_0) v }_{\eta,0}.\\
    & = \abs{\psi }_{-\eta,0}\abs{ v- \phi(m_0) v }_{\eta,0} \\
    & \leq \abs{\psi}_{0,0} e^{\eta a}\abs{ v- \phi(m_0) v }_{\eta,0}.
\end{align*}
Summarizing, we get for $\eta \in \R_{\geq \rho}$
\[
   \abs{\partial_{0,\rho}^{-1} F_\rho (v)(\psi) -  \partial_{0,\rho}^{-1}F_\rho(\phi(m_0) v)(\psi)} \leq \abs{\psi}_{0,0} e^{\eta a}\abs{ v- \phi(m_0) v }_{\eta,0}.
\]
By continuity, we deduce for $\eta \in \R_{\geq \rho}$
\begin{align*}
     &\abs{\partial_{0,\rho}^{-1} F_\rho (v)(\psi) -  \partial_{0,\rho}^{-1}F_\rho(\chi_{\R_{<a}}(m_0) v)(\psi)}\\& \leq \abs{\psi}_{0,0} e^{\eta a}\abs{ v- \chi_{\R_{<a}}(m_0) v }_{\eta,0}  \\
      &  = \abs{\psi}_{0,0} e^{\eta a} \abs{\chi_{\R_{>a}}(m_0)v}_{\eta,0} = \abs{\psi}_{0,0} e^{\eta a} \left( \int_a^\infty \abs{v(t)}^2 e^{-2\eta t} d t\right)^{\frac12} \\
      &  = \abs{\psi}_{0,0} \left( \int_0^\infty \abs{v(t+a)}^2 e^{-2\eta t} d t\right)^{\frac12}.
\end{align*}
Letting $\eta \to \infty$ in the above inequality, we conclude that
\[
\abs{\partial_{0,\rho}^{-1} F_\rho (v)(\psi) -  \partial_{0,\rho}^{-1}F_\rho(\chi_{\R_{<a}}(m_0) v)(\psi)}=0.
                                                                    \]
Hence, by the choice of $\psi$, the function $\partial_{0,\rho}^{-1} F_\rho (v)-\partial_{0,\rho}^{-1}F_\rho(\chi_{(-\infty, a)}(m_0) v)$ is not supported on the set $\R_{<a}$. This yields the claim.
\end{proof}

Of course, as it was already noted in Remark \ref{rem: rho_Inde}, it is tempting to believe that the solution of \eqref{eq:ode} provided by the above Picard-Lindel\"of-type theorems (cf.\ Theorem \ref{thm:Picard-Lindeloef} and the Corollaries \ref{cor:Picard-Lindeloef 2} and \ref{cor:Picard-Lindeloef 3}) depends on the particular choice of $\rho$. This is not the case as our next result confirms.

\begin{thm}\label{thm: rho-Independent} With the assumptions and the notation from Theorem \ref{thm:Picard-Lindeloef}, the following holds. The respective solutions $w_{\rho_{k}}\in H_{\rho_{k},0}(\R)\otimes H$, $k\in \{1,2\}$, of \eqref{eq:ode} for $\rho_{1},\rho_{2}\geq\rho_{0}$, coincide, i.e., \[
w_{\rho_{1}}=w_{\rho_{2}}\in H_{\rho_{1},0}(\R)\otimes H\cap H_{\rho_{2},0}(\R)\otimes H\]
provided that
\[
   \partial_{0,\rho_1}w_{\rho_1}= F_{\rho_1}(w_{\rho_1}) 
   \quad\text{and}\quad 
   \partial_{0,\rho_2}w_{\rho_2}= F_{\rho_2}(w_{\rho_2})
\]
holds in $H_{\rho_{1},-1}(\R)\otimes H$ and $H_{\rho_{2},-1}(\R)\otimes H$, respectively.
 \end{thm}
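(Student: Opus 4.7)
The plan is to exploit that the two fixed-point problems are two continuous realizations of one and the same formal integral equation, and to run a parallel Picard iteration whose iterates coincide in $L^2_{\mathrm{loc}}(\R;H)$. Without loss of generality I assume $\rho_0 < \rho_1 \leq \rho_2$.

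Two compatibility facts drive the argument. First, by the comment following Corollary \ref{cor:FLT}, the operator $\partial_{0,\rho}$ is $\rho$-independent on its common domain; in particular, the explicit formula of that corollary gives $(\partial_{0,\rho}^{-1}\varphi)(x) = \int_{-\infty}^x \varphi(s)\,ds$ for every test function $\varphi$ and every $\rho > 0$, independently of $\rho$. Second, the Lipschitz extensions $F_{\rho_1}$ and $F_{\rho_2}$ are by construction continuous extensions of a single map $F : \interior C_\infty(\R;H) \to \interior C_\infty^+(\R;H)'$ and so agree on the test space. Taken together, these facts say that the two solution operators $\partial_{0,\rho_k}^{-1} F_{\rho_k}$ are two continuous realizations of one and the same formal causal operator.

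I would then set up a parallel Picard iteration: for $k = 1, 2$ put $u^{(k)}_0 := 0$ and $u^{(k)}_{n+1} := \partial_{0,\rho_k}^{-1} F_{\rho_k}(u^{(k)}_n)$, so that $u^{(k)}_n \to w_{\rho_k}$ in $H_{\rho_k,0}(\R) \otimes H$ as in the proof of Theorem \ref{thm:Picard-Lindeloef}. I would next show by induction on $n$ that $u^{(1)}_n = u^{(2)}_n$ as elements of $L^2_{\mathrm{loc}}(\R;H)$. The base case $n=0$ is trivial. For the inductive step I would approximate the common iterate simultaneously in both weighted norms by test functions via a cutoff-plus-mollification scheme, use the two compatibility facts above to see that $\partial_{0,\rho_1}^{-1} F_{\rho_1}$ and $\partial_{0,\rho_2}^{-1} F_{\rho_2}$ produce identical outputs on the test-function inputs, and pass to the limit using the continuity of each extension in its own norm. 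Since convergence in each $H_{\rho_k,0}$ implies convergence in $L^2_{\mathrm{loc}}$, this forces $w_{\rho_1} = w_{\rho_2}$ locally in $L^2$, and hence in $H_{\rho_1,0}(\R) \otimes H \cap H_{\rho_2,0}(\R) \otimes H$.

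The main obstacle is precisely the joint-approximation step: approximating an element of $H_{\rho_1,0}(\R) \otimes H \cap H_{\rho_2,0}(\R) \otimes H$ by test functions simultaneously in both weighted norms, so that the continuous extensions can be legitimately compared through their actions on the test-function approximants. This is a routine cutoff-plus-mollification argument but requires dominated-convergence estimates valid for both weights at once. Alternatively, one may package the localization step into the causality statement of Theorem \ref{thm:causality}, comparing the two solutions on every half-line $(-\infty,a)$, where the weighted topologies are locally equivalent and causality isolates the dependence of the solution on its past; either route reduces the problem to the structural facts collected in Section \ref{sec:TTD}.
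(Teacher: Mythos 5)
Your argument is correct, but it runs along a genuinely different track than the paper's. The paper localizes: by causality (Theorem \ref{thm:causality}) each solution satisfies $\chi_{\R_{<a}}(m_0)w_{\rho_k}=\chi_{\R_{<a}}(m_0)\partial_{0,\rho_k}^{-1}F_{\rho_k}\bigl(\chi_{\R_{<a}}(m_0)w_{\rho_k}\bigr)$, and since a function supported on $\R_{<a}$ and lying in $H_{\rho_k,0}(\R)\otimes H$ automatically lies in $H_{\rho,0}(\R)\otimes H$ for every $\rho\in[\rho_0,\min\{\rho_1,\rho_2\}]$, both truncations land in one common Hilbert space where a single Cauchy--Schwarz plus the contraction constant $s<1$ forces $\chi_{\R_{<a}}(m_0)(w_{\rho_1}-w_{\rho_2})=0$ for every $a$. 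You instead avoid causality entirely: you run the two Picard iterations in parallel, use that $\partial_{0,\rho}^{-1}$ acts as $\int_{-\infty}^{\cdot}$ on test-function data independently of $\rho$ and that all $F_\rho$ extend the single map $F$, and propagate equality of the iterates in $L^2_{\mathrm{loc}}$ by a simultaneous test-function approximation in both weighted norms. That approximation step is indeed routine (truncate to $[-N,N]$, where each tail is controlled by the finiteness of the respective weighted integral, then mollify on the compact set where all weights are equivalent), and convergence in either $H_{\rho_k,0}$ implies $L^2_{\mathrm{loc}}$ convergence, so the induction closes. The trade-off: the paper's route is shorter because truncation places both solutions into one space where a single fixed-point estimate finishes, but it quietly uses the consistency $\chi_{\R_{<a}}(m_0)\partial_{0,\rho_k}^{-1}F_{\rho_k}=\chi_{\R_{<a}}(m_0)\partial_{0,\rho}^{-1}F_{\rho}$ on the intersection of domains --- exactly the density fact you make explicit; your route pays for independence from Theorem \ref{thm:causality} with the joint-approximation lemma and an extra limit passage. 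Your closing ``alternative'' (comparing the solutions on every half-line $(-\infty,a)$ via causality) is essentially the paper's proof.
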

 \begin{proof} Let $a\in\R$, $\rho\in \mathbb{R}_{\geq\rho_{0}}$. Denoting by $w_\rho$ the solution of
 \[
    \partial_{0,\rho}w_{\rho}= F_\rho(w_{\rho}) \in H_{\rho,-1}(\R)\otimes H,
 \]we recall $w_{\rho}\in H_{\rho,0}(\R)\otimes H$.
 Moreover, we have due to causality, i.e., Theorem \ref{thm:causality}
  \begin{eqnarray*}
\chi_{\R_{<a}}\left(m_{0}\right)w_{\rho} & = & \chi_{\R_{<a}}\left(m_{0}\right)\partial_{0,\rho}^{-1}F\left(w_{\rho}\right)\\
 & = & \chi_{\R_{<a}}\left(m_{0}\right)\partial_{0,\rho}^{-1}F\left(\chi_{\R_{<a}}\left(m_{0}\right)w_{\rho}\right).\end{eqnarray*}
 Let  $\rho\in \mathbb{R}_{\geq\rho_{0}}$ be such that $\min\{\rho_1,\rho_2\}\geq \rho$. Then, as $\partial_{0,\rho}^{-1}F_\rho$ leaves $H_{\rho,0}(\R)\otimes H$ invariant, we have for $k\in\{1,2\}$, since $\chi_{\R_{<a}}(m_0)w_{\rho_k} \in H_{\rho,0}(\R)\otimes H$, the following equality
 \[
\chi_{\R_{<a}}(m_0)\partial_{0,\rho_k}^{-1}F_{\rho_k}(\chi_{\R_{<a}}(m_0)w_{\rho_k})=\chi_{\R_{<a}}(m_0)\partial_{0,\rho}^{-1}F_\rho(\chi_{\R_{<a}}(m_0)w_{\rho_k}).
 \]
 Hence, \begin{eqnarray*}
 &  & \left\langle w_{\rho_{1}}-w_{\rho_{2}}\big|\chi_{\R_{<a}}\left(m_{0}\right)\left(w_{\rho_{1}}-w_{\rho_{2}}\right)\right\rangle_{\rho,0}=\\
 &  & \qquad=\left\langle w_{\rho_{1}}-w_{\rho_{2}}\big|\chi_{\R_{<a}}\left(m_{0}\right)\left(\partial_{0,\rho}^{-1}F_\rho\left(\chi_{\R_{<a}}\left(m_{0}\right)w_{\rho_{2}}\right)-\partial_{0,\rho}^{-1}F_\rho\left(\chi_{\R_{<a}}\left(m_{0}\right)w_{\rho_{2}}\right)\right)\right\rangle _{\rho,0}.\end{eqnarray*}
 Using the Cauchy-Schwarz inequality, we get
 \begin{align*}
& \left|\chi_{\R_{<a}}\left(m_{0}\right)\left(w_{\rho_{1}}-w_{\rho_{2}}\right)\right|_{\rho,0}^2 \\ &\leq \left|F\left(\chi_{\R_{<a}}\left(m_{0}\right)w_{\rho_{1}}\right)-F\left(\chi_{\R_{<a}}\left(m_{0}\right)w_{\rho_{2}}\right)\right|_{\rho,-1}\left|\chi_{\R_{<a}}\left(m_{0}\right)\left(w_{\rho_{1}}-w_{\rho_{2}}\right)\right|_{\rho,0}\\
 & \leq s \left|\chi_{\R_{<a}}\left(m_{0}\right)\left(w_{\rho_{1}}-w_{\rho_{2}}\right)\right|_{\rho,0}^2.\end{align*}
 Since $s<1$, we deduce $ \left|\chi_{\R_{<a}}\left(m_{0}\right)\left(w_{\rho_{1}}-w_{\rho_{2}}\right)\right|_{\rho,0}=0$.
 Since $a\in\mathbb{R}$ was arbitrary, the desired result follows.
\end{proof}

In the spirit of the continuous dependence result Theorem \ref{thm:continuous dependence}, we now show causality of the solution operator in a suitably adapted sense. Thereby, we strengthen the causality result by showing that the solution is independent of \emph{any} future of $F$. Before, however, stating the theorem, we define the set of possible right hand sides in \eqref{eq:ode} for some $\rho\in\R_{>0}$.


\begin{defn}\label{def:Lip} Let $H$ be a Hilbert space. For $\rho\in \R_{>0}$ we define $\textnormal{Con}_{\textnormal{ev}}(H_{\rho,0}(\R)\otimes H; H_{\rho,-1}(\R)\otimes H)$ the set of all eventually contracting mappings, i.e., $F \in \textnormal{Con}_{\textnormal{ev}}(H_{\rho,0}(\R)\otimes H; H_{\rho,-1}(\R)\otimes H)$ if and only if $F: H_{\rho,0}(\R)\otimes H \to H_{\rho,-1}(\R)\otimes H$ and there exists $s\in (0,1)$ such that for all $\eta \in \R_{\geq \rho}$, there is $K\in \R_{>0}$ such that for all $u,w\in \interior C_\infty(\R;H)$ and $\psi \in \interior C_\infty^+(\R;H)$ it holds
\[
    \abs{F(0)(\psi)}\leq K\abs{\psi}_{-\eta,1}
    \quad\text{and}\quad
    \abs{F(u)(\psi)-F(w)(\psi)} \leq s \abs{\psi}_{-\eta,1}\abs{u-w}_{\eta,0}.
\]
We may summarize the solution operators of \eqref{eq:ode} in the following way.
Define
\begin{align*}
   S_\rho : \,&\textnormal{Con}_{\textnormal{ev}}(H_{\rho,0}(\R)\otimes H; H_{\rho,-1}(\R)\otimes H) \to H_{\rho,0}(\R)\otimes H,
\end{align*}
where for $F\in \textnormal{Con}_{\textnormal{ev}}(H_{\rho,0}(\R)\otimes H; H_{\rho,-1}(\R)\otimes H)$ the element $S_\rho(F)\in H_{\rho,0}(\R)\otimes H$ is the unique fixed point of
\[
  x=\partial_{0,\rho}^{-1}F\left(x\right).
\]
Note that Theorem \ref{thm:Picard-Lindeloef} ensures the unique existence of this fixed point.
\end{defn}

We may formulate the strengthened causality result.

\begin{thm} Let $\rho\in \R_{>0}$, $a\in\R$ and let $H$ be a Hilbert space. Let $F,G \in \textnormal{Con}_{\textnormal{ev}}(H_{\rho,0}(\R)\otimes H; H_{\rho,-1}(\R)\otimes H)$. Assume $\chi_{\R_{<a}}\left(m_{0}\right)\partial_{0,\rho}^{-1}F=\chi_{\R_{<a}}\left(m_{0}\right)\partial_{0,\rho}^{-1}G$. Then  $\chi_{\R_{<a}}\left(m_{0}\right)S_\rho(F)=\chi_{\R_{<a}}\left(m_{0}\right)S_\rho(G)$.
\end{thm}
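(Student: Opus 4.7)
The plan is to reduce this to the kind of contraction argument used in the proof of Theorem \ref{thm: rho-Independent}, combining the causality result of Theorem \ref{thm:causality} with the hypothesized agreement of $\chi_{\R_{<a}}(m_{0})\partial_{0,\rho}^{-1}F$ and $\chi_{\R_{<a}}(m_{0})\partial_{0,\rho}^{-1}G$. Write $u:=S_{\rho}(F)$ and $v:=S_{\rho}(G)$ so that $u=\partial_{0,\rho}^{-1}F(u)$ and $v=\partial_{0,\rho}^{-1}G(v)$ in $H_{\rho,0}(\R)\otimes H$.

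First, apply Theorem \ref{thm:causality} to both $F$ and $G$ to rewrite the fixed point identities in ``cut off'' form:
\[
   \chi_{\R_{<a}}(m_{0})u=\chi_{\R_{<a}}(m_{0})\partial_{0,\rho}^{-1}F\bigl(\chi_{\R_{<a}}(m_{0})u\bigr), \quad \chi_{\R_{<a}}(m_{0})v=\chi_{\R_{<a}}(m_{0})\partial_{0,\rho}^{-1}G\bigl(\chi_{\R_{<a}}(m_{0})v\bigr).
\]
The hypothesis $\chi_{\R_{<a}}(m_{0})\partial_{0,\rho}^{-1}F=\chi_{\R_{<a}}(m_{0})\partial_{0,\rho}^{-1}G$ (as equality of mappings on $H_{\rho,0}(\R)\otimes H$) lets me replace $F$ by $G$ in the first equality:
\[
   \chi_{\R_{<a}}(m_{0})u=\chi_{\R_{<a}}(m_{0})\partial_{0,\rho}^{-1}G\bigl(\chi_{\R_{<a}}(m_{0})u\bigr).
\]

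Subtracting the two identities and taking the inner product with $\chi_{\R_{<a}}(m_{0})(u-v)$ in $H_{\rho,0}(\R)\otimes H$, I can use that $\chi_{\R_{<a}}(m_{0})$ is a self-adjoint projection to absorb one copy into the left factor. By Cauchy--Schwarz, the unitarity of $\partial_{0,\rho}^{-1}:H_{\rho,-1}(\R)\otimes H\to H_{\rho,0}(\R)\otimes H$ from Theorem \ref{thm:sobolev chain}, and the strict contractivity of (the extension of) $G$ with constant $s<1$ guaranteed by $G\in \textnormal{Con}_{\textnormal{ev}}$, I get
\[
  \bigl|\chi_{\R_{<a}}(m_{0})(u-v)\bigr|_{\rho,0}^{2}\leq s\,\bigl|\chi_{\R_{<a}}(m_{0})(u-v)\bigr|_{\rho,0}^{2},
\]
which forces $\chi_{\R_{<a}}(m_{0})(u-v)=0$ since $s<1$.

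I do not anticipate a genuine obstacle; the main subtlety is being careful that the hypothesis is interpreted as equality of the solution-valued maps on all of $H_{\rho,0}(\R)\otimes H$ (so that one may substitute $\chi_{\R_{<a}}(m_0)u$ in the argument), and that the contraction constant of $G$ extends from $\interior C_{\infty}(\R;H)$ to the whole of $H_{\rho,0}(\R)\otimes H$ by the density/extension statement built into Definition \ref{def:Lip} together with Theorem \ref{thm:Picard-Lindeloef}. Both are immediate from the setup, so the argument is essentially a two-line projection-plus-contraction estimate mirroring the $\rho$-independence proof.
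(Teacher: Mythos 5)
Your proposal is correct and follows essentially the same route as the paper: use the causality of $\partial_{0,\rho}^{-1}F$ and $\partial_{0,\rho}^{-1}G$ from Theorem \ref{thm:causality} to rewrite both fixed-point identities in cut-off form, invoke the hypothesis to replace $F$ by $G$ (the paper replaces $G$ by $F$, which is the same move), and then conclude that the two truncations coincide. The only cosmetic difference is that the paper finishes by observing that $\chi_{\R_{<a}}(m_0)u$ and $\chi_{\R_{<a}}(m_0)v$ are fixed points of the same contraction $\chi_{\R_{<a}}(m_0)\partial_{0,\rho}^{-1}F$, whereas you derive the equivalent estimate $\bigl|\chi_{\R_{<a}}(m_{0})(u-v)\bigr|_{\rho,0}\leq s\,\bigl|\chi_{\R_{<a}}(m_{0})(u-v)\bigr|_{\rho,0}$ directly (for which, incidentally, taking norms of the subtracted identity already suffices, since $\chi_{\R_{<a}}(m_0)$ has norm at most one; the inner-product and Cauchy--Schwarz detour is not needed).
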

\begin{proof}
For $x,y \in H_{\rho,0}(\R)\otimes H$ we need to show $\chi_{\R_{<a}}\left(m_{0}\right)x=\chi_{\R_{<a}}\left(m_{0}\right)y$,
if \[
x=\partial_{0,\rho}^{-1}F\left(x\right),\: y=\partial_{0,\rho}^{-1}G\left(y\right).\]
Due to the causality of $\partial_{0,\rho}^{-1}F$ and $\partial_{0,\rho}^{-1}G$, cf. Theorem \ref{thm:causality}, we have\begin{eqnarray*}
\chi_{\R_{<a}}\left(m_{0}\right)x & = & \chi_{\R_{<a}}\left(m_{0}\right)\partial_{0,\rho}^{-1}F\left(\chi_{\R_{<a}}\left(m_{0}\right)x\right)\\
\chi_{\R_{<a}}\left(m_{0}\right)y & = & \chi_{\R_{<a}}\left(m_{0}\right)\partial_{0,\rho}^{-1}G\left(\chi_{\R_{<a}}\left(m_{0}\right)y\right)\\
 & = & \chi_{\R_{<a}}\left(m_{0}\right)\partial_{0,\rho}^{-1}F\left(\chi_{\R_{<a}}\left(m_{0}\right)y\right)\end{eqnarray*}
We see that $\chi_{\R_{<a}}\left(m_{0}\right)x$ and $\chi_{\R_{<a}}\left(m_{0}\right)y$
are both solutions of a fixed point problem for the same contractive
mapping\[
\chi_{\R_{<a}}\left(m_{0}\right)\partial_{0,\rho}^{-1}F,\]
which implies \[
\chi_{\R_{<a}}\left(m_{0}\right)x=\chi_{\R_{<a}}\left(m_{0}\right)y.\qedhere\]
\end{proof}

\subsection{Delay and Memory}\label{Subsec: DelayMem}

In this section we provide a new definition for operators having delay. In order to do so, we define the opposite, i.e., we introduce the concept of an operator being memoryless or amnesic. It is remarkable that this notion is dual to the concept of causality.
\begin{defn} Let $\rho\in\R$ and $X, Y$ be Hilbert spaces.
A mapping\[
W:D\left(W\right)\subseteq H_{\rho,0}(\R)\otimes X\to H_{\rho,0}(\R)\otimes Y,\]
 is called \emph{amnesic} or said to \emph{have no delay} if for all $a\in\mathbb{R}$ and $x,y\in D\left(W\right)$
\[
\left(\chi_{_{\mathbb{R}_{>a}}}\left(m_{0}\right)\left(x-y\right)=0\implies\chi_{_{\mathbb{R}_{>a}}}\left(m_{0}\right)\:\left(W\left(x\right)-W\left(y\right)\right)=0\right).
\]
If a mapping $W$ is not amnesic, we also say $W$ \emph{has memory}
or \emph{has delay}.
\end{defn}

We observe that, by the very definition, a first example for an amnesic operator is $\partial_{0,\rho}^{-1}$ for $\rho<0$ or $(\partial_{0,\rho}^{-1})^*$ for $\rho>0$. We may also give other examples of amnesic operators, namely operators of Nemitzki type.

\begin{example}[Nemitzki operators]\label{ex:Nem} Let $H$ be a Hilbert space, $f : \R \times H \to H$, $\rho \in \R_{>0}$. We assume that $f$ is uniformly Lipschitz continuous with respect to the first variable, i.e., there exists $L>0$ such that for all $t\in\R$ and $x,y\in H$ we have
\[
   \abs{f(t,x)-f(t,y)}_H \leq L \abs {x - y}_H.
\]
Moreover, assume $f(t,0)=0$ for all $t\in\R$.
We may define the following mapping
\[
   F_\rho : H_{\rho,0}(\R)\otimes H \to H_{\rho,0}(\R)\otimes H : u \mapsto (t\mapsto f(t,u(t))).
\]
The uniform Lipschitz continuity of $f$ together with $f(t,0)=0$ for all $t\in\R$ ensures that $F_\rho$ is well-defined. We claim that $F_\rho$ is amnesic. Indeed, let $a\in \R$ and $u,v \in  H_{\rho,0}(\R)\otimes H$ be such that $\chi_{\R_{>a}}(m_0)(u-v)=0$. Then for a.e. $t\in\R_{>a}$ we have
\[
   F_\rho(t,u(t))= F_\rho(t, \chi_{\R_{>a}}(t)u(t))=F_\rho(t,\chi_{\R_{>a}}(t)v(t))=F_\rho(t,v(t)).
\]
Thus, the claim follows.
\end{example}

An example of an operator, which has memory is $\partial_{0,\rho}^{-1}$ for $\rho>0$. Now, we want to \emph{define}, when a differential equation of the form \eqref{eq:ode} is a delay differential equation, which, to the best of our knowledge, has not been done yet in a mathematically rigorous way. For a definition of delay differential equations of the form \eqref{eq:ode}, one should take into account that this should only depend on the right hand side $F$. However, this right hand side is in general not of the form to be described as amnesic or to have delay, since $F$ maps in general into a space with negative index. Taking into account that the operator $(\partial_{0,\rho}^{-1})^*$ is amnesic for $\rho>0$ and the application of $(\partial_{0,\rho}^{-1})^*$ transforms $F$ into a mapping within $H_{\rho,0}(\R)\otimes H$ (cf. Remark \ref{rem: dualdescr}\eqref{rem: dualdescr2}), we arrive at the following possible definition.
\begin{defn} Let $H$ be a Hilbert space, $\rho>0$ and $F \in \textnormal{Con}_{\textnormal{ev}}(H_{\rho,0}(\R)\otimes H; H_{\rho,-1}(\R)\otimes H)$. A differential equation of the form (\ref{eq:ode}), i.e.,
\[
   \partial_{0,\rho} u = F(u)
\] is called a \emph{delay differential equation} if $\left(\partial_{0,\rho}^{-1}\right)^{*}F$ has delay.
\end{defn}
We illustrate this definition by means of the following examples.
\begin{example} Let $H$ be a Hilbert space and let $f: H\to H$ satisfy analogue conditions as in Example \ref{ex:Nem}, i.e., $f$ is Lipschitz continuous with Lipschitz constant $L>0$ and $f(0)=0$. Let $\rho\in \R_{>L}$, $g\in H_{\rho,-1}(\R)\otimes H$. We consider a differential equation of the following form\footnote{Such type of problems will be discussed later, when we come to initial value problems.}, with $F_\rho$ being analogously defined as in Example \ref{ex:Nem},
\begin{equation}\label{eq:amn}
   \partial_{0,\rho} u = F_\rho (u)+g.
\end{equation}
The latter equation admits a unique solution $u\in H_{\rho,0}(\R)\otimes H$ by our choice of $\rho$, cf. Corollary \ref{cor:Picard-Lindeloef 3}. Moreover, this differential equation is not a delay differential equation, since, as a composition of amnesic mappings, the mapping $u\mapsto (\partial_{0,\rho}^{-1})^*(F_\rho(u)+g)$ is amnesic itself. We may change equation \eqref{eq:amn} a little by introducing a time translation. Let $h\in\R_{>0}$ and $\tau_{-h}$ be the time translation of Example \ref{ex:time_translation}. Consider the differential equation
\begin{equation}\label{eq:del}
   \partial_{0,\rho} u = F_\rho (\tau_{-h} u) + g.
\end{equation}
The equation \eqref{eq:del} is a delay differential equation, if $f$ is not constant. Indeed, if $f$ is not constant, then there is $x_1,x_2 \in H$ such that $f(x_1)\neq f(x_2)$. Define $u := \chi_{[-h,0]}(\cdot)x_1, v := \chi_{[-h,0]}(\cdot)x_2$. Then $\chi_{\R_{>0}}(m_0)(u-v)=0$. Using the fact that $(\partial_{0,\rho}^{-1})^*$ is amnesic, we get
\begin{align*}
   & \chi_{\R_{>0}}(m_0)((\partial_{0,\rho}^{-1})^*(F_\rho(\tau_{-h}u)+g)-(\partial_{0,\rho}^{-1})^*(F_\rho(\tau_{-h} v)+g)) \\
   & = \chi_{\R_{>0}}(m_0)((\partial_{0,\rho}^{-1})^*(F_\rho(\tau_{-h} u))-(\partial_{0,\rho}^{-1})^*(F_\rho(\tau_{-h} v)))\\
   & = \chi_{\R_{>0}}(m_0)((\partial_{0,\rho}^{-1})^*\chi_{\R_{>0}}(m_0)(F_\rho(\tau_{-h} u))-(\partial_{0,\rho}^{-1})^*\chi_{\R_{>0}}(m_0)(F_\rho(\tau_{-h} v)))\\
   & = \chi_{\R_{>0}}(m_0) (\partial_{0,\rho}^{-1})^*\chi_{\R_{>0}}(m_0)(F_\rho(\tau_{-h} u)-F_\rho(\tau_{-h} v)).
\end{align*}
Since for a.e. $t\in [0, h]$ we have $(F_\rho(\tau_{-h} u)(t)-F_\rho(\tau_{-h} v)(t))=f(x_1)-f(x_2)\neq 0$, we deduce
\[
   \chi_{\R_{>0}}(m_0)(F_\rho(\tau_{-h} u)-F_\rho(\tau_{-h} v))\neq 0.
\]
Hence, as $(\partial_{0,\rho}^{-1})^*$ is one-to-one, we have
\[
  \chi_{\R_{>0}}(m_0) (\partial_{0,\rho}^{-1})^*\chi_{\R_{>0}}(m_0)(F_\rho(\tau_{-h} u)-F_\rho(\tau_{-h} v))\neq 0.
\]
This yields that \eqref{eq:del} is a delay differential equation. This justifies a posteriori the name delay operator for $\tau_{-h}$ since, by setting $f$ to be the identity on $H$, the equation \eqref{eq:del} is indeed a delay differential equation.
\end{example}

\section{Applications}\label{sec:app}

In this section we illustrate the versatility of the concepts developed above. In particular we give several examples of delay problems in order to show that many delay problems fit into the unified framework which we developed. We start with a discussion on how to investigate initial value problems within our context.

\subsection{Initial Value Problems for ODE}

We discuss the initial value problem
\begin{equation}\label{eq:ivode}
   \partial_0 u = F(u), \quad u(0) = u_0.
\end{equation}
This problem seems not to be covered by our previous reasoning, however, as we will show now, our abstract solution theory also applies to this initial value problem. Since our approach basically builds on $L^2$-space based arguments, we need a theorem, which justifies point-wise evaluation of functions. In order to do so, we define a weighted H\"older-type space.
\begin{defn} Let $\rho\in \R_{>0}$, $H$ Hilbert space. Then we define for a continuous function $\phi \in C(\R;H)$
\begin{align*}
   \left|\phi\right|_{\rho,\infty,1/2} & :=  \sup\left\{ \left|\exp\left(-\rho t\right)\phi\left(t\right)\right|_{H}\:\big|\, t\in\mathbb{R}\right\}\\  & \qquad + \sup\left\{ \left.\frac{\left|\exp\left(-\rho t\right)\phi\left(t\right)-\exp\left(-\rho s\right)\phi\left(s\right)\right|_{H}}{\left|t-s\right|^{1/2}}\:\right|\, t,s\in\mathbb{R}\:\wedge t\not=s\right\}.
\end{align*}
Moreover, define $C_{\rho,\infty,1/2}(\R;H):= \{ \phi \in C(\R;H); \abs{\phi}_{\rho,\infty,1/2}<\infty\}$. The vector space $C_{\rho,\infty,1/2}(\R;H)$ becomes a Banach space under the norm $\abs{\cdot}_{\rho,\infty,1/2}$.
\end{defn}

We have the following form of a Sobolev embedding result.
\begin{lem}[{\cite[Lemma 3.1.59]{PicMcG2011}}] \label{lem:ex:trace}Let $\rho\in\mathbb{R}\setminus\{0\}$. Then the mapping
\begin{eqnarray*}
  \interior C_{\infty}\left(\mathbb{R}; H\right)\subseteq H_{\rho,1}\left(\mathbb{R}\right)\otimes H & \to &C_{\rho,\infty,1/2}(\R;H) \\
u & \mapsto & \left(\R \ni t\mapsto u\left(t\right) \in H \right)\end{eqnarray*}
 has a continuous extension $\Gamma$ to all of $H_{\rho,1}\left(\mathbb{R}\right)\otimes H$
(sometimes called the {}``trace operator'' or the {}``operator
of point-wise evaluation in time'').
Moreover, for all $u\in H_{\rho,1}\left(\mathbb{R}\right)\otimes H$
\[
\sup\left\{ \left|\exp\left(-\rho t\right)\left(\Gamma
u\right)\left(t\right)\right|_{H}\:\big|\, t\in\mathbb{R}\right\}
\leq\frac{1}{\sqrt{2\abs{\rho}}}\:\left|u\right|_{H_{\rho,1}\left(\mathbb{R}\right)\otimes
H}\]
 and\[
\sup\left\{ \left.\frac{\left|\exp\left(-\rho t\right)\left(\Gamma u\right)\left(t\right)-\exp\left(-\rho s\right)\left(\Gamma u\right)\left(s\right)\right|_{H}}{\left|t-s\right|^{1/2}}\:\right|\, t,s\in\mathbb{R}\:\wedge t\not=s\right\} \leq\left|u\right|_{H_{\rho,1}\left(\mathbb{R}\right)\otimes H}.\]
 Furthermore, the mapping $\Gamma$ is injective.
\end{lem}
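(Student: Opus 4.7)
The plan is to prove all the estimates on smooth compactly supported functions first, then extend by density. Throughout, fix $u \in \interior C_\infty(\mathbb{R};H)$ and assume first $\rho>0$ (the case $\rho<0$ is symmetric, integrating from $+\infty$ instead). Recall from the discussion after Corollary \ref{cor:FLT} that $\partial_{0,\rho}u = u'$ as a pointwise derivative on such functions, so
\[
   |u|_{\rho,1}^{2} \;=\; |\partial_{0,\rho}u|_{\rho,0}^{2} \;=\; \int_{\mathbb{R}} |u'(\tau)|_H^{2}\,e^{-2\rho\tau}\,d\tau .
\]
Set the unitarily rescaled function $v(\tau):=e^{-\rho\tau}u(\tau)\in \interior C_\infty(\mathbb{R};H)\subseteq L^{2}(\mathbb{R};H)$. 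A direct computation yields $v'(\tau)=e^{-\rho\tau}(u'(\tau)-\rho u(\tau))$.

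The key preliminary identity is an integration by parts. Since $u$ has compact support,
\[
   0 \;=\; \int_{\mathbb{R}} \frac{d}{ds}\Bigl(|u(s)|_H^{2}\,e^{-2\rho s}\Bigr)\,ds \;=\; 2\int_{\mathbb{R}} \Re\langle u'(s),u(s)\rangle_H\, e^{-2\rho s}\,ds \;-\; 2\rho\,|u|_{\rho,0}^{2},
\]
so $\Re\int u'\,\overline{u}\,e^{-2\rho s}\,ds = \rho\,|u|_{\rho,0}^{2}$. Expanding the square,
\[
   |v'|_{L^{2}(\mathbb{R};H)}^{2}
    \;=\; |u'|_{\rho,0}^{2} \;-\; 2\rho\cdot\rho\,|u|_{\rho,0}^{2} \;+\; \rho^{2}\,|u|_{\rho,0}^{2}
    \;=\; |u|_{\rho,1}^{2} \;-\; \rho^{2}\,|u|_{\rho,0}^{2}
    \;\le\; |u|_{\rho,1}^{2}.
\]
For the uniform bound, write
\[
   |v(t)|_H^{2} \;=\; \int_{-\infty}^{t} \frac{d}{ds}|v(s)|_H^{2}\,ds \;=\; -2\rho\int_{-\infty}^{t}e^{-2\rho s}|u(s)|_H^{2}\,ds \;+\; 2\int_{-\infty}^{t}e^{-2\rho s}\Re\langle u(s),u'(s)\rangle_H\,ds.
\]
Using the Young inequality $2ab\le 2\rho\,a^{2}+\frac{1}{2\rho}b^{2}$ on the second integrand, the two terms involving $|u(s)|^{2}$ cancel, leaving
\[
   |v(t)|_H^{2}\;\le\;\frac{1}{2\rho}\int_{-\infty}^{t}e^{-2\rho s}|u'(s)|_H^{2}\,ds\;\le\;\frac{1}{2\rho}\,|u|_{\rho,1}^{2},
\]
which is exactly the first claimed estimate. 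For the Hölder bound, write $v(t)-v(s)=\int_{s}^{t}v'(\tau)\,d\tau$ and apply the Cauchy–Schwarz inequality:
\[
   |v(t)-v(s)|_H\;\le\;|t-s|^{1/2}\,|v'|_{L^{2}(\mathbb{R};H)}\;\le\;|t-s|^{1/2}\,|u|_{\rho,1},
\]
which is the second claimed estimate. Taken together, these show that the assignment $u\mapsto v=e^{-\rho\,\cdot\,}u$ is continuous from $\interior C_\infty(\mathbb{R};H)$, with its $H_{\rho,1}$-norm, into $C_{\rho,\infty,1/2}(\mathbb{R};H)$.

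Since $\interior C_\infty(\mathbb{R};H)$ is a core for $\partial_{0,\rho}$ (cf.\ the remark after Corollary \ref{cor:FLT}), it is dense in $H_{\rho,1}(\mathbb{R})\otimes H$. As $C_{\rho,\infty,1/2}(\mathbb{R};H)$ is complete under $|\cdot|_{\rho,\infty,1/2}$, the map extends by uniform continuity to a bounded linear operator $\Gamma:H_{\rho,1}(\mathbb{R})\otimes H\to C_{\rho,\infty,1/2}(\mathbb{R};H)$ satisfying the same two estimates. Finally, for injectivity: if $\Gamma u = 0$ for some $u\in H_{\rho,1}(\mathbb{R})\otimes H$, then for any approximating sequence $(u_{n})$ in $\interior C_\infty(\mathbb{R};H)$ with $u_{n}\to u$ in $H_{\rho,1}$, the sup estimate gives $u_{n}\to 0$ uniformly on compacta. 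But $u_{n}\to u$ also in $H_{\rho,0}(\mathbb{R})\otimes H$, hence a subsequence converges to $u$ pointwise a.e., forcing $u=0$ in $L^{2}_{\mathrm{loc}}$ and therefore in $H_{\rho,1}(\mathbb{R})\otimes H$.

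The step most likely to need care is extracting the sharp constant $\tfrac{1}{\sqrt{2\rho}}$ in the sup bound: the naive Sobolev embedding $|v(t)|^{2}\le 2|v|_{L^{2}}|v'|_{L^{2}}$ combined with $|u|_{\rho,0}\le\tfrac{1}{\rho}|u|_{\rho,1}$ gives only the weaker constant $\sqrt{2/\rho}$, and the identity $|v'|_{L^{2}}^{2}=|u|_{\rho,1}^{2}-\rho^{2}|u|_{\rho,0}^{2}$ alone is also insufficient. The sharp constant is forced by choosing the AM–GM splitting $2\rho$ versus $1/(2\rho)$ so that the $|u|^{2}$-terms cancel exactly against the $-2\rho\int e^{-2\rho s}|u|^{2}\,ds$ produced by differentiating $|v|^{2}$; this tight matching is the only subtle point in the proof. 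The case $\rho<0$ requires substituting $\tilde u(t):=u(-t)$ (which swaps the weight sign and reverses the direction of integration), after which the above argument applies verbatim to produce the same constants with $|\rho|$ in place of $\rho$.
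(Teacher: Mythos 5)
Your proof is correct and follows essentially the same route as the paper: establish the two estimates on the dense core $\interior C_{\infty}\left(\mathbb{R};H\right)$ (using the identity $|u|_{\rho,1}^{2}=|\partial_{\rho}u|_{\rho,0}^{2}+\rho^{2}|u|_{\rho,0}^{2}$ for the H\"older bound) and extend by uniform continuity. The only cosmetic differences are that the paper obtains the sup bound via a weighted Cauchy--Schwarz estimate on $\int_{s}^{t}\partial_{0,\rho}\phi$ followed by $s\to-\infty$ rather than your exactly-tuned Young-inequality differential inequality, and proves injectivity by showing $\partial_{0,\rho}f=0$ through duality with test functions rather than by a.e.\ convergence of a subsequence; both variants yield the same sharp constants.
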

\begin{proof} Let $\phi\in\interior C_{\infty}(\mathbb{R};H)$.
We have for $s,t \in \R$ with $s\leq t$ invoking H\"older's inequality and the mean value theorem \begin{eqnarray*}
\left|\phi\left(t\right)-\phi\left(s\right)\right|_{H} & = & \left|\int_{s}^{t}\partial_{0,\rho}\phi\left(u\right)\: du\right|_{H}\\
 & \leq & \sqrt{\left|\int_{s}^{t}\:\exp\left(2\rho u\right)\: du\right|}\sqrt{\left|\int_{s}^{t}\left|\partial_{0,\rho}\phi\left(u\right)\right|_{H}^{2}\:\exp\left(-2\rho u\right)\: du\right|}\\
 & \leq & \sqrt{\frac{\left|\exp\left(2\rho t\right)-\exp\left(2\rho s\right)\right|}{2\abs{\rho}}}\left|\phi\right|_{\rho,1}\\
 & \leq & \sqrt{\left|t-s\right|}\:\sqrt{\frac{\left|\exp\left(2\rho t\right)-\exp\left(2\rho s\right)\right|}{2\abs{\rho}\left|t-s\right|}}\;\left|\phi\right|_{\rho,1}\\
 & \leq & \sqrt{\left|t-s\right|}\:\max\left\{ \exp\left(\rho x\right)\big|\: x\in\{s,t\} \right\} \;\left|\phi\right|_{\rho,1}\end{eqnarray*}
 from which we can read off the desired H\"older continuity. With $s\to-\infty$
we also see from the second inequality that\[
\left|\phi\left(t\right)\right|\leq\exp\left(\rho
t\right)\sqrt{\frac{1}{2\abs{\rho}}}\left|\phi\right|_{\rho,1}.\]
 Moreover, using the relations $\partial_{0,\rho} =\partial_\rho + \rho$, cf.\ Corollary \ref{cor:FLT}, and $\abs{\phi}_{\rho,1}^2 = \abs{\rho \phi}_{\rho,0}^2+ \abs{\partial_\rho \phi }_{\rho,0}^2$, we calculate  \begin{eqnarray*}
\left|\exp\left(-\rho t\right)\phi\left(t\right)-\exp\left(-\rho s\right)\phi\left(s\right)\right|_{H} & = & \left|\int_{s}^{t}\left(\partial_{0,\rho}\left(\exp\left(-\rho m_{0}\right)\phi\right)\right)\left(u\right)\: du\right|_{H}\\
 & \leq & \sqrt{\left|t-s\right|}\sqrt{\left|\int_{s}^{t}\left|\left(\partial_{0,\rho}\left(\exp\left(-\rho m_{0}\right)\phi\right)\right)\left(u\right)\right|_{H}^{2}\: du\right|}\\
 & \leq & \sqrt{\left|t-s\right|}\sqrt{\left|\int_{s}^{t}\left|\left(\partial_{0,\rho}\phi-\rho\phi\right)\left(u\right)\right|_{H}^{2}\:\exp\left(-2\rho u\right)\: du\right|}\\
 & \leq & \sqrt{\left|t-s\right|}\sqrt{\left|\int_{s}^{t}\left|\left(\left(\partial_{0,\rho}-\rho\right)\phi\right)\left(u\right)\right|_{H}^{2}\:\exp\left(-2\rho u\right)\: du\right|}\\
 & \leq & \sqrt{\left|t-s\right|}\:\left|\left(\partial_\rho\right)\phi\right|_{\rho,0},\\
 & \leq & \sqrt{\left|t-s\right|}\:\sqrt{\left|\left(\partial_\rho\right)\phi\right|_{\rho,0}^{2}+\left|\rho\phi\right|_{\rho,0}^{2}}\\
 & = & \sqrt{\left|t-s\right|}\:\left|\phi\right|_{\rho,1},\end{eqnarray*}
 which shows that the mapping under consideration is a well-defined
continuous linear mapping. This mapping can now be extended by the
obvious uniform continuity to all of $H_{\rho,1}\left(\mathbb{R}\right)\otimes H$
due to the density of $\interior C_{\infty}\left(\mathbb{R};H\right)$
in $H_{\rho,1}\left(\mathbb{R}\right)\otimes H$.

Finally, to see that $\Gamma:\: H_{\rho,1}\left(\mathbb{R}\right)\otimes H\to C_{\rho,\infty,1/2}(\R;H) $
is injective, assume $\left(\phi_{k}\right)_{k}$ is a sequence in
$\interior C_{\infty}\left(\mathbb{R};H\right)$
with the property that $\left|\phi_{k}\right|_{\rho,\infty,1/2}\overset{k\to\infty}{\to}0$
is a Cauchy sequence in $H_{\rho,1}\left(\mathbb{R}\right)\otimes H$
with limit $f\in H_{\rho,1}\left(\mathbb{R}\right)\otimes H$. We
need to show that $f=0$. Letting $k\to\infty$ in the equality\begin{eqnarray*}
\left\langle \partial_{0,\rho}\phi_{k}\big|\,\psi\right\rangle_{0,0} & = & \left\langle \phi_{k}\big|\,-\partial_{0,-\rho}\psi\right\rangle _{0,0}\:,\end{eqnarray*}
 where $\psi\in\interior C_{\infty}\left(\mathbb{R};H\right)$
is arbitrary, we obtain\begin{eqnarray*}
\left\langle \partial_{0,\rho}f\big|\,\psi\right\rangle _{0,0} & = & 0\end{eqnarray*}
 for every $\psi\in\interior C_{\infty}\left(\mathbb{R};H\right)$
from which we conclude that \begin{eqnarray*}
\partial_{0,\rho}f & = & 0\end{eqnarray*}
 and hence\begin{eqnarray*}
f & = & 0\textrm{ in }H_{\rho,1}\left(\mathbb{R}\right)\otimes H\end{eqnarray*}
 follows. \end{proof}

Lemma \ref{lem:ex:trace} gives a criterion when it may be reasonable
to impose initial conditions. The solution of the respective
differential equation has to lie in some sense in the space
$H_{\rho,1}(\R)\otimes H$. We need the following definition.
\begin{defn}[Dirac delta distribution] Let $\rho\in\R_{>0}$. Then
$\chi_{\R_{>0}} \in H_{\rho,0}(\R)$. We define the Dirac delta
distribution $\delta$ in the point zero as the derivative of the
Heavyside function:
\[
\delta :=   \partial_{0,\rho} \chi_{\R_{>0}}.
\]
Clearly, $\delta \in H_{\rho,-1}(\R)$. Moreover, it is easy to see
that
\[
   \delta : H_{-\rho,1}(\R)\to \mathbb{K} : \phi \mapsto \phi(0).
\] For a Hilbert space $H$ and $w\in H$ we denote by $\delta\otimes
w \in H_{\rho,-1}(\R)\otimes H$ the derivative of $t\mapsto
\chi_{\R_{>0}}(t)w$.
\end{defn}

Now our perspective on initial value problems is as follows.

\begin{thm}\label{thm:ivode} Let $\rho_0 \in \R_{>0}$, $H$ a Hilbert space, $C>0$. Let $F: \interior
C_\infty(\R;H) \to \interior C_\infty^+(\R;H)'$ be such that for all $\rho\in\R_{>\rho_0}$ there exists $K\in \R_{>0}$ such that for all
$u,w\in \interior C_\infty(\R;H)$ and $\psi \in \interior
C_\infty^+(\R;H)$ the estimates
\[
  \abs{F(0)(\psi)}\leq K \abs{\psi}_{-\rho,0}
  \quad\text{and}\quad 
  \abs{F(u)(\psi)-F(w)(\psi)} \leq C
   \abs{\psi}_{-\rho,0}\abs{u-w}_{\rho,0}
\]
hold. Moreover, assume that $F(\phi)=0$ for all $\phi\in \interior
C_\infty(\R;H)$ with $\supp \phi\subseteq (-\infty,0)$. Let $\rho \in \R_{>\rho_0}$, $u_0\in H$ and denote by $F_\rho:
H_{\rho,0}(\R)\otimes H \to H_{\rho,0}(\R)\otimes H$ the unique
Lipschitz continuous extension of $F$. Then the
equation
\[
   \partial_{0,\rho} u = F_{\rho}(u) + \delta\otimes u_0
\]
admits a unique solution $u\in H_{\rho,0}(\R)\otimes H$ such that $u-\chi_{\R_{>0}}(m_0)u \in
H_{\rho,1}(\R)\otimes H$ and $u(0+)=u_0$.
\end{thm}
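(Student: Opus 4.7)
The plan is to reduce the initial value problem to a fixed point equation of the kind treated in Corollary \ref{cor:Picard-Lindeloef 3}, and then to realise the initial condition through causality together with the Sobolev trace Lemma \ref{lem:ex:trace}.

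First I would absorb the Dirac source. Setting $u_{1}:=\chi_{\R_{>0}}(m_{0})\otimes u_{0}\in H_{\rho,0}(\R)\otimes H$ gives $\partial_{0,\rho}u_{1}=\delta\otimes u_{0}$ in $H_{\rho,-1}(\R)\otimes H$ by the very definition of $\delta$. Writing $u=u_{1}+v$, the stated equation becomes $\partial_{0,\rho}v=F_{\rho}(v+u_{1})=:G_{\rho}(v)$. The hypotheses on $F$ provide, by duality between the $|\cdot|_{-\rho,0}$-norm on the test side and the $H_{\rho,0}$-norm on the range side, a $C$-Lipschitz extension $F_{\rho}:H_{\rho,0}(\R)\otimes H\to H_{\rho,0}(\R)\otimes H$ with $|F_{\rho}(0)|_{\rho,0}\leq K$. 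Consequently $G_{\rho}$ is $C$-Lipschitz from $H_{\rho,0}(\R)\otimes H$ into itself with $|G_{\rho}(0)|_{\rho,0}\leq K+C|u_{1}|_{\rho,0}$; Corollary \ref{cor:Picard-Lindeloef 3} with $k=0$ (applied for $\rho$ also chosen larger than $C$, which the statement implicitly requires) delivers a unique $v\in H_{\rho,1}(\R)\otimes H$ with $\partial_{0,\rho}v=G_{\rho}(v)$ in $H_{\rho,0}(\R)\otimes H$. Unravelling, $u:=v+u_{1}\in H_{\rho,0}(\R)\otimes H$ solves the stated equation in $H_{\rho,-1}(\R)\otimes H$.

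Next I would show that $u$ vanishes on $\R_{<0}$. The assumption $F(\phi)=0$ for smooth $\phi$ with $\supp\phi\subseteq(-\infty,0)$ extends, by a standard translation-and-mollification argument together with the Lipschitz continuity of $F_{\rho}$, to $F_{\rho}(w)=0$ for every $w\in H_{\rho,0}(\R)\otimes H$ with $\supp w\subseteq(-\infty,0]$. Via the Sobolev embedding $H_{\rho,0}\hookrightarrow H_{\rho,-1}$ of operator norm $1/\rho$, the map $F_{\rho}$ is strictly contracting into $H_{\rho,-1}(\R)\otimes H$ for $\rho>C$, so Theorem \ref{thm:causality} gives causality of $\partial_{0,\rho}^{-1}F_{\rho}$ on $H_{\rho,0}(\R)\otimes H$. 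Multiplying the fixed point identity $u=\partial_{0,\rho}^{-1}F_{\rho}(u)+u_{1}$ by $\chi_{\R_{<0}}(m_{0})$ and inserting causality yields
\begin{equation*}
    \chi_{\R_{<0}}(m_{0})u = \chi_{\R_{<0}}(m_{0})\partial_{0,\rho}^{-1}F_{\rho}\bigl(\chi_{\R_{<0}}(m_{0})u\bigr) = 0,
\end{equation*}
the last equality by the extended vanishing of $F_{\rho}$. The asserted $u-\chi_{\R_{>0}}(m_{0})u\in H_{\rho,1}(\R)\otimes H$ is then trivial since this expression equals $\chi_{\R_{\leq 0}}(m_{0})u=0$.

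Finally I would read off the initial value via Lemma \ref{lem:ex:trace}. Since $v\in H_{\rho,1}(\R)\otimes H$, it admits a H\"older continuous representative $\Gamma v$; as both $u$ and $u_{1}$ vanish on $\R_{<0}$, also $v$ does there, so continuity of $\Gamma v$ forces $(\Gamma v)(0)=0$. On $\R_{>0}$ one has $u=v+u_{0}$ pointwise, hence $u(0+)=v(0)+u_{0}=u_{0}$. Uniqueness in the declared class follows by reversing the substitution: any other solution $\tilde u$ produces $\tilde v:=\tilde u-u_{1}\in H_{\rho,1}(\R)\otimes H$ with $\partial_{0,\rho}\tilde v=G_{\rho}(\tilde v)$, whence $\tilde v=v$ by the uniqueness part of Corollary \ref{cor:Picard-Lindeloef 3}. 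The main obstacle will be the vanishing step, i.e., carefully transferring $F(\phi)=0$ on smooth $\phi$ supported in $(-\infty,0)$ to $F_{\rho}(w)=0$ for all $w\in H_{\rho,0}(\R)\otimes H$ supported in $(-\infty,0]$ and combining this with causality; everything else amounts to bookkeeping inside the framework already developed.
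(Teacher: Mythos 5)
Your proposal is correct and follows essentially the same route as the paper: a Picard--Lindel\"of fixed point argument for the equation with the Dirac source, causality of $\partial_{0,\rho}^{-1}F_\rho$ from Theorem \ref{thm:causality} combined with the vanishing of $F$ on the past to get $u=0$ on $\R_{<0}$, and the trace Lemma \ref{lem:ex:trace} to read off $u(0+)=u_0$. The only differences are cosmetic: you substitute $u=\chi_{\R_{>0}}\otimes u_0+v$ and invoke Corollary \ref{cor:Picard-Lindeloef 3} rather than applying Theorem \ref{thm:Picard-Lindeloef} directly to $u\mapsto F_\rho(u)+\delta\otimes u_0$, and you spell out the translation-and-mollification step extending $F(\phi)=0$ to all of $H_{\rho,0}(\R)\otimes H$ supported in $(-\infty,0]$, which the paper leaves implicit.
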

\begin{proof}  The unique existence of $u \in H_{\rho,0}(\R)\otimes H$ follows from Theorem \ref{thm:Picard-Lindeloef}.
Moreover, from the equation that is satisfied by $u$ we get
\[
u =    \partial_{0,\rho}^{-1} (F_{\rho}(u) + \delta\otimes u_0).
\]
Therefore,
\[
   u-\chi_{\R_{>0}}u = u - \partial_{0,\rho}^{-1}\delta\otimes u_0 =
   \partial_{0,\rho}^{-1} F_\rho(u).
\]
Hence, we read off that
\[
 t\mapsto u(t)-\chi_{\mathbb{R}_{\ge0}}(t)u_{0}
 \]
 lies in $H_{\rho,1}\left(\mathbb{R}\right)\otimes H$ and thus in $C_{\rho,\infty,1/2}(\R;H)$, by Lemma \ref{lem:ex:trace}. As a consequence,
\[
 \begin{array}{ccl}
u(0+)-u_{0} & = & \left(u-\chi_{\mathbb{R}_{\ge0}}\otimes u_{0}\right)(0+)\\
 & = & \left(u-\chi_{\mathbb{R}_{\ge0}}\otimes u_{0}\right)(0-)\\
 & = & u(0-)\end{array}.\]
Theorem \ref{thm:causality} yields the causality of
$\partial_{0,\rho}^{-1}F_\rho$. It follows that $u\left(0-\right)=0$ and
therefore the initial condition\[ u\left(0+\right)=u_{0}\]  is satisfied.
\end{proof}

The fact that the solution depends continuously on the initial data is formulated in our setting as follows.

\begin{thm} Let $H$ be a Hilbert space, $C,D\in\R_{>0}$. Let $F, G : \interior C_\infty(\R;H)\to \interior
C_\infty^+(\R;H)'$ be such that for all $\rho\in \R_{>\max\{C,D\}}$, there exists $K\in\R_{>0}$ such that for all
$u,w\in \interior C_\infty(\R;H)$ and $\psi \in \interior
C_\infty^+(\R;H)$ we have
\[
   \abs{F(u)(\psi)-F(w)(\psi)}\leq C
   \abs{\psi}_{-\rho,0}\abs{u-w}_{\rho,0} \text{ and }   \abs{G(u)(\psi)-G(w)(\psi)}\leq
   D \abs{\psi}_{-\rho,0}\abs{u-w}_{\rho,0}.
\]
and
\[
   \abs{F(0)(\psi)}\leq K \abs{\psi}_{-\rho,0}
   \quad\text{and}\quad 
   \abs{G(0)(\psi)}\leq K \abs{\psi}_{-\rho,0}
\]Let $u_0, w_0 \in H$ and denote by $F_\rho, G_\rho$ the respective extensions of $F$ and $G$ as continuous mappings within
$H_{\rho,0}(\R)\otimes H$. Moreover, let $u,w \in
H_{\rho,0}(\R)\otimes H$ be the respective solutions of the
differential equations \[
  \partial_{0,\rho}u=F_\rho(u)+\delta\otimes u_0
   \quad\text{and}\quad
  \partial_{0,\rho}w=G_\rho(w)+\delta \otimes w_0.
\] Then the continuous dependence estimate
\[
\left|u-w\right|_{\rho,0}\le
\frac{1}{\left(2\varrho-\left(C+D\right)\right)}\left(
\sqrt{2\rho}|u_0-w_0|_{H}+2\sup_{x\in H_{\rho,0}(\R)\otimes
H}\abs{F_\rho(x)-G_\rho(x)}_{\rho,0}\right)\] holds.
\end{thm}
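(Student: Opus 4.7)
The plan is to mirror the symmetrisation argument of Theorem \ref{thm:continuous dependence}, with the extra initial-value term handled by the explicit identity $\partial_{0,\rho}^{-1}\delta=\chi_{\R_{>0}}$. First I would observe that, under the stated Lipschitz estimates, both $F_\rho$ and $G_\rho$ are Lipschitz continuous self-maps of $H_{\rho,0}(\R)\otimes H$ with Lipschitz constants at most $C$ and $D$ respectively (since the estimate $|F(u)(\psi)-F(w)(\psi)|\le C|\psi|_{-\rho,0}|u-w|_{\rho,0}$ precisely says that $F(u)-F(w)$, as an element of $(H_{-\rho,0}(\R)\otimes H)^*\cong H_{\rho,0}(\R)\otimes H$, has $H_{\rho,0}$-norm bounded by $C|u-w|_{\rho,0}$). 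Subtracting the two differential equations and applying $\partial_{0,\rho}^{-1}$ yields
\[
u-w=\partial_{0,\rho}^{-1}\bigl(F_\rho(u)-G_\rho(w)\bigr)+\partial_{0,\rho}^{-1}\bigl(\delta\otimes(u_0-w_0)\bigr).
\]

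For the second summand, the definition of $\delta$ gives $\partial_{0,\rho}^{-1}(\delta\otimes(u_0-w_0))=\chi_{\R_{>0}}\otimes(u_0-w_0)$, whose $H_{\rho,0}$-norm is computed directly as
\[
\Bigl(\int_0^{\infty}e^{-2\rho t}\,dt\Bigr)^{1/2}|u_0-w_0|_H=\frac{1}{\sqrt{2\rho}}\,|u_0-w_0|_H.
\]
For the first summand, I would use the standard symmetrisation
\[
F_\rho(u)-G_\rho(w)=\tfrac12\bigl(F_\rho(u)-F_\rho(w)\bigr)+\tfrac12\bigl(G_\rho(u)-G_\rho(w)\bigr)+\tfrac12\bigl(F_\rho(u)-G_\rho(u)\bigr)+\tfrac12\bigl(F_\rho(w)-G_\rho(w)\bigr)
\]
and combine it with the bound $\|\partial_{0,\rho}^{-1}\|_{L(H_{\rho,0})}=1/\rho$ from Corollary \ref{cor:FLT}(d) to obtain
\[
\bigl|\partial_{0,\rho}^{-1}(F_\rho(u)-G_\rho(w))\bigr|_{\rho,0}\le \frac{C+D}{2\rho}\,|u-w|_{\rho,0}+\frac{1}{\rho}\sup_{x\in H_{\rho,0}(\R)\otimes H}|F_\rho(x)-G_\rho(x)|_{\rho,0}.
\]

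Adding the two estimates, collecting the $|u-w|_{\rho,0}$ contributions on the left, and using that the hypothesis $\rho>\max\{C,D\}$ guarantees $2\rho-(C+D)>0$, I would arrive at
\[
|u-w|_{\rho,0}\,\frac{2\rho-(C+D)}{2\rho}\le \frac{1}{\rho}\sup_x|F_\rho(x)-G_\rho(x)|_{\rho,0}+\frac{1}{\sqrt{2\rho}}\,|u_0-w_0|_H,
\]
which, after multiplying by $2\rho/(2\rho-(C+D))$, reduces exactly to the claimed inequality. Conceptually nothing is deep here; the only step that is not purely routine is the identification $\partial_{0,\rho}^{-1}(\delta\otimes h)=\chi_{\R_{>0}}\otimes h$ together with the computation of its $H_{\rho,0}$-norm, and this is the step where the factor $\sqrt{2\rho}$ (rather than $1/\rho$) in the final estimate comes from. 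Once this is in place, the argument is a straightforward variant of the already established continuous dependence theorem.
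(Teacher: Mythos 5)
Your argument is correct and is essentially the paper's own: the paper applies Theorem \ref{thm:continuous dependence} to the shifted maps $\tilde F_\rho(x)=F_\rho(x)+\delta\otimes u_0$ and $\tilde G_\rho(x)=G_\rho(x)+\delta\otimes w_0$ and then splits off the term $\partial_{0,\rho}^{-1}(\delta\otimes(u_0-w_0))=\chi_{\R_{>0}}\otimes(u_0-w_0)$ with norm $\tfrac{1}{\sqrt{2\rho}}\abs{u_0-w_0}_H$, which is exactly the computation you carry out after inlining the symmetrisation trick from the proof of that theorem. All constants, including the passage from $\rho>\max\{C,D\}$ to $2\rho-(C+D)>0$ and the factor $\sqrt{2\rho}$, check out.
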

\begin{proof} Define $\tilde F_\rho : H_{\rho,0}(\R)\otimes H \to H_{\rho,-1}(\R)\otimes H:  u\mapsto F_\rho(u)+\delta\otimes u_0$ and analogously $\tilde G_\rho$. Applying Theorem \ref{thm:continuous dependence} and using Corollary \ref{cor:FLT}, we get
\begin{align*}
  \abs{u-w}_{\rho,0} & \leq \frac{1}{1-\frac{\abs{\tilde F_\rho}_{\textnormal{Lip}}+\abs{\tilde G_\rho}_{\textnormal{Lip}}}{2}} \sup_{x\in H_{\rho,0}(\R)\otimes H}\abs{\tilde F_\rho(x)-\tilde G_{\rho}(x)}_{\rho,-1} \\
                     & \leq \frac{1}{1-\frac{\abs{F_\rho}_{\textnormal{Lip}}+\abs{G_\rho}_{\textnormal{Lip}}}{2\rho}} \sup_{x\in H_{\rho,0}(\R)\otimes H}\abs{\partial_{0,\rho}^{-1}\left(F_\rho(x)+\delta\otimes u_0-G_\rho(x)+\delta\otimes u_0\right)}_{\rho,0} \\
                     & \leq \frac{2\rho}{2\rho-(C+D)}\left(\abs{\chi_{R_{>0}}\otimes (u_0-w_0)}_{\rho,0} + \sup_{x\in H_{\rho,0}(\R)\otimes H}\abs{\partial_{0,\rho}^{-1}\left(F_\rho(x)-G_\rho(x)\right)}_{\rho,0}\right) \\
                     & \leq \frac{2\rho}{2\rho-(C+D)}\left(\frac{1}{\sqrt{2\rho}}\abs{u_0-w_0}_{H} + \frac{1}{\rho}\sup_{x\in H_{\rho,0}(\R)\otimes H}\abs{F_\rho(x)-G_\rho(x)}_{\rho,0}\right).\qedhere
\end{align*}
\end{proof}

\subsection{Local Solvability}

It appears that the above solution theory only deals with global
solutions. This is, however, not the case. To illustrate how to get local existence results we consider an
initial value problem in a Hilbert space $H$
 \begin{align}
\partial_{0}u(t) & =g(t,u(t)),\label{eq:loc_ode}\\
u(0) & =u_{0},\nonumber \end{align}
 where $g:[0,T]\times H\to H$ is a measurable function. Moreover, for every $x\in H$ it holds
$g(\cdot,x)\in L^{\infty}([0,T],H)$ and there exists a radius $\eta\in\mathbb{R}_{>0}$ and a constant
$L\in\mathbb{R}_{>0}$ such that for all $y,z\in\overline{B_{H}(x,\eta)}=\{ w\in H; \abs{w-x}_H \leq \eta\}$ we have \begin{equation}
|g(\cdot,y)-g(\cdot,z)|_{L^{\infty}([0,T];H)}\leq L|y-z|_{H}.\label{eq:Lipschitz_cont}
\end{equation}
 In this situation we derive the following Lemma.
 \begin{lem} Let $u_{0}\in H$ and $\eta\in\mathbb{R}_{>0}$
such that (\ref{eq:Lipschitz_cont}) is satisfied for each $y,z\in\overline{B_{H}(u_{0},\eta)}$.
We denote the projection on the closed, convex set $\overline{B_{H}(u_{0},\eta)}$
by $P$. Then for each $\rho\in\mathbb{R}_{>0}$ the operator $F_{\rho}$
defined by \begin{align*}
F_{\rho}:C([0,T];H)\cap\left(H_{\rho,0}(\mathbb{R})\otimes H\right)\subseteq H_{\rho,0}(\mathbb{R})\otimes H & \to H_{\rho,0}(\mathbb{R})\otimes H\\
u & \mapsto(t\mapsto\chi_{[0,T]}(t)g(t,P(u(t))))\end{align*}
 is a Lipschitz-continuous mapping with $\limsup_{\rho\to\infty}|F_{\rho}|_{\mathrm{Lip}}<\infty$. Moreover, the continuous extension of $F_\rho$, for which we will use the same name is causal.
 \end{lem}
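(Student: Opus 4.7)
The plan is to carry out the estimate for Lipschitz continuity via a direct pointwise application of the hypothesis \eqref{eq:Lipschitz_cont}, and then to observe that both Lipschitz continuity and causality pass to the continuous extension.

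First I would recall that the metric projection $P$ onto the closed convex set $\overline{B_H(u_0,\eta)}$ in a Hilbert space is non-expansive, i.e., $|P(x)-P(y)|_H \le |x-y|_H$ for all $x,y \in H$. Combined with the fact that $P(u(t)), P(v(t)) \in \overline{B_H(u_0,\eta)}$ by construction, the hypothesis \eqref{eq:Lipschitz_cont} yields the pointwise estimate
\[
  |g(t,P(u(t))) - g(t,P(v(t)))|_H \le L\,|P(u(t))-P(v(t))|_H \le L\,|u(t)-v(t)|_H
\]
for a.e.\ $t \in [0,T]$. Multiplying by $\chi_{[0,T]}(t)$, squaring, multiplying by $\exp(-2\rho t)$, and integrating over $\mathbb{R}$ gives
\[
  |F_\rho(u)-F_\rho(v)|_{\rho,0}^2 \le L^2 \int_0^T |u(t)-v(t)|_H^2 \exp(-2\rho t)\, dt \le L^2\,|u-v|_{\rho,0}^2.
\]
Hence $|F_\rho|_{\mathrm{Lip}} \le L$ uniformly in $\rho$, and in particular $\limsup_{\rho \to \infty} |F_\rho|_{\mathrm{Lip}} \le L < \infty$.

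Next, since the domain on which $F_\rho$ is initially defined is dense in $H_{\rho,0}(\mathbb{R})\otimes H$ (it contains $\interior C_\infty(\mathbb{R};H)$), this Lipschitz estimate ensures a unique continuous extension to all of $H_{\rho,0}(\mathbb{R})\otimes H$, which I keep calling $F_\rho$.

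For causality, I would exploit the pointwise (Nemitzki-type) structure of $F_\rho$. If $u,v$ lie in the original domain and $\chi_{\mathbb{R}_{<a}}(m_0)(u-v)=0$, then $u(t)=v(t)$ for a.e.\ $t<a$, and thus $\chi_{[0,T]}(t)g(t,P(u(t))) = \chi_{[0,T]}(t)g(t,P(v(t)))$ pointwise for a.e.\ $t<a$, giving $\chi_{\mathbb{R}_{<a}}(m_0)(F_\rho(u)-F_\rho(v))=0$. To pass this property to the extension I would approximate arbitrary $u,v \in H_{\rho,0}(\mathbb{R})\otimes H$ with $\chi_{\mathbb{R}_{<a}}(m_0)(u-v)=0$ by sequences $(u_n),(v_n)$ in the original domain with $\chi_{\mathbb{R}_{<a}}(m_0)(u_n-v_n)=0$ (obtained for instance by replacing the approximants of $v$ on $(-\infty,a)$ by those of $u$), and use continuity of $F_\rho$ together with continuity of the multiplier $\chi_{\mathbb{R}_{<a}}(m_0)$.

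The only mildly delicate point is the construction of these approximating sequences respecting the support condition; everything else is essentially a bookkeeping exercise using that $P$ is $1$-Lipschitz and $g$ is uniformly Lipschitz on the image of $P$.
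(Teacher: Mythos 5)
Your core Lipschitz estimate is exactly the paper's: non-expansiveness of the metric projection $P$ onto the closed convex ball, the local Lipschitz bound \eqref{eq:Lipschitz_cont} applied to the projected values, and integration against the weight $e^{-2\rho t}$ over $[0,T]$, giving $|F_\rho|_{\mathrm{Lip}}\le L$ uniformly in $\rho$. Two points deserve attention. First, you never verify that $F_\rho$ is \emph{well-defined}, i.e., that $F_\rho(u)$ actually lies in $H_{\rho,0}(\mathbb{R})\otimes H$; your difference estimate only controls $F_\rho(u)-F_\rho(v)$, so you still need one anchor point with finite norm. The paper closes this by comparing with the constant function $\chi_{\mathbb{R}_{\geq 0}}\otimes u_0$, for which $F_\rho(\chi_{\mathbb{R}_{\geq 0}}\otimes u_0)(t)=\chi_{[0,T]}(t)g(t,u_0)$ has finite weighted $L^2$-norm precisely because of the standing hypothesis $g(\cdot,u_0)\in L^{\infty}([0,T];H)$ and the boundedness of $[0,T]$; you should add this line, since it is the only place where that hypothesis enters. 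Second, your approximation scheme for transporting causality to the extension has a wrinkle: gluing the approximants of $u$ on $(-\infty,a)$ to those of $v$ on $[a,\infty)$ generically produces a jump at $a$, so for $a\in(0,T)$ the glued functions leave the domain $C([0,T];H)\cap\left(H_{\rho,0}(\mathbb{R})\otimes H\right)$ on which $F_\rho$ is initially defined. A cleaner route is to observe that the Nemitzki formula $u\mapsto\left(t\mapsto\chi_{[0,T]}(t)g(t,P(u(t)))\right)$ makes sense for every measurable representative of $u\in H_{\rho,0}(\mathbb{R})\otimes H$ and obeys the same Lipschitz bound there, hence coincides with the continuous extension; causality is then immediate from your pointwise argument. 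The paper itself dismisses causality as ``straightforward,'' so this is a repair of your extension step rather than a divergence in method.
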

\begin{proof} For $u,v\in C([0,T];H)\cap H_{\rho,0}(\mathbb{R})\otimes H$
we estimate \begin{align*}
\int_{\mathbb{R}}|F_{\rho}(u)(t)-F_{\rho}(v)(t)|_{H}^{2}e^{-2\rho t}dt & =\int_{0}^{T}|g(t,P(u(t)))-g(t,P(v(t)))|_{H}^{2}e^{-2\rho t}dt\\
 & \leq L^{2}\int_{0}^{T}|P(u(t))-P(v(t))|_{H}^{2}e^{-2\rho t}dt\\
 & \leq L^{2}\int_{0}^{T}|u(t)-v(t)|_{H}^{2}e^{-2\rho t}dt\\
 & \leq L^{2}|u-v|_{\rho,0}^{2}.\end{align*}
 This would prove the Lipschitz continuity, if we ensure that $F_{\rho}$
is well-defined. This, however, follows by using the above estimate
to obtain \begin{align*}
|F_{\rho}(u)|_{\rho,0} & \leq|F_{\rho}(u)-F_{\rho}(\chi_{\mathbb{R}_{\geq0}}\otimes u_{0})|_{\rho,0}+|F_{\rho}(\chi_{\mathbb{R}_{\geq0}}\otimes u_{0})|_{\rho,0}\\
 & \leq L^{2}|u-\chi_{\mathbb{R}_{\geq0}}\otimes u_{0}|_{\rho,0}+|g(\cdot,u_{0})|_{L^{\infty}([0,T];H)}\sqrt{\frac{1-e^{-2\rho T}}{2\rho}}<\infty.\end{align*}
 The causality of $F_\rho$ is straightforward.
 \end{proof}

This Lemma shows that $F_{\rho}$ satisfies the conditions of our
solution theorem concerning initial value problems Theorem \ref{thm:ivode}. Thus, there is $\rho_0\in\R_{>0}$ such that for all $\rho\in\R_{>\rho_0}$ we find a unique solution $v\in H_{\rho,0}(\mathbb{R})\otimes H$
of \begin{equation}
\partial_{0,\rho}v=F_{\rho}(v)+\delta\otimes u_{0}.\label{eq:global_eq}\end{equation}
The next theorem asserts that a solution to \eqref{eq:global_eq} satisfies Equation \eqref{eq:loc_ode} at least for some non-vanishing time interval.
\begin{thm} Let $\rho\in \R_{>\rho_{0}}$ and let $v\in H_{\rho,0}(\mathbb{R})\otimes H$ be
the solution of (\ref{eq:global_eq}). Then there exists $t_{\ast}\in]0,T]$
such that $v$ satisfies Equation (\ref{eq:loc_ode}) on the interval
$[0,t_{\ast}]$. \end{thm}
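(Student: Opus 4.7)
The plan is to exploit the continuous representative of $v$ near $t=0$ guaranteed by the initial value framework of Theorem \ref{thm:ivode}, and then to use continuity to guarantee that the projection $P$ acts trivially on a neighborhood of $0$ on the right.

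First I would invoke Theorem \ref{thm:ivode} applied to $F_\rho$. Since $F_\rho$ satisfies the hypotheses of that theorem (Lipschitz estimate from the previous Lemma, and $F_\rho(\phi)=0$ for $\phi$ supported in $(-\infty,0)$ thanks to the factor $\chi_{[0,T]}$), we obtain $v-\chi_{\R_{>0}}(m_0)u_0 \in H_{\rho,1}(\R)\otimes H$ and $v(0+)=u_0$. By the Sobolev embedding Lemma \ref{lem:ex:trace}, the function $t\mapsto v(t)-\chi_{\R_{\geq 0}}(t)u_0$ admits a continuous representative on $\R$ (in the weighted Hölder class $C_{\rho,\infty,1/2}$). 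In particular, $v$ has a continuous representative on $[0,T]$ with $v(0+)=u_0$.

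Next, exploiting this continuity at $0$ from the right together with $v(0+)=u_0$, I can find $t_\ast \in\,]0,T]$ such that $|v(t)-u_0|_H\le\eta$ for all $t\in[0,t_\ast]$, i.e., $v(t)\in\overline{B_H(u_0,\eta)}$ on $[0,t_\ast]$. On this set the projection $P$ acts as the identity, so that for almost every $t\in[0,t_\ast]$
\[
    F_\rho(v)(t)=\chi_{[0,T]}(t)\,g(t,P(v(t)))=g(t,v(t)).
\]
Consequently, the fixed-point equation $\partial_{0,\rho}v=F_\rho(v)+\delta\otimes u_0$ reduces on $[0,t_\ast]$ to $\partial_{0,\rho}v(t)=g(t,v(t))$. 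Combining this with $v(0+)=u_0$, I conclude that $v$ solves \eqref{eq:loc_ode} on $[0,t_\ast]$.

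The only delicate point is verifying that the pointwise evaluation and the identification $P(v(t))=v(t)$ are compatible with the \emph{distributional} form of the equation in $H_{\rho,-1}(\R)\otimes H$. I would handle this by noting that since $v-\chi_{\R_{\geq 0}}\otimes u_0\in H_{\rho,1}(\R)\otimes H$, the left-hand side $\partial_{0,\rho}v$ equals $\delta\otimes u_0+\partial_{0,\rho}(v-\chi_{\R_{\geq 0}}\otimes u_0)$, where the second summand is an $L^2$-type object; cancelling the Dirac masses, the equation $\partial_{0,\rho}(v-\chi_{\R_{\geq 0}}\otimes u_0)=F_\rho(v)$ holds in $H_{\rho,0}(\R)\otimes H$, so the identity $F_\rho(v)(t)=g(t,v(t))$ on $[0,t_\ast]$ transfers without trouble, yielding \eqref{eq:loc_ode} in the classical weak sense on the interval.
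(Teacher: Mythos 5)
Your argument is correct and reaches the same pivot point as the paper --- namely that $v$ stays in $\overline{B_H(u_0,\eta)}$ on some interval $[0,t_*]$, so that $P$ acts as the identity there and $F_\rho(v)(t)=g(t,v(t))$ --- but it gets there by a genuinely softer route. The paper does not simply invoke continuity of $v$ at $0+$: it uses causality of $F_\rho$ to write $\partial_{0,\rho}\bigl(\chi_{\R_{\leq t}}(m_0)v+\chi_{\R_{>t}}\otimes v(t)-\chi_{\R_{\geq 0}}\otimes u_0\bigr)=\chi_{\R_{\leq t}}(m_0)F_\rho(\chi_{\R_{\leq t}}(m_0)v)$ and then applies the first trace inequality of Lemma \ref{lem:ex:trace} to obtain the quantitative bound
\[
\sup_{s\in[0,t]}|v(s)-u_0|_H\;\le\; e^{\rho t}\,\frac{\sqrt{\tfrac{1-e^{-2\rho t}}{2\rho}}}{\sqrt{2\rho}-L\sqrt{t}}\,|g(\cdot,u_0)|_{L^\infty([0,t];H)}\qquad\bigl(t<\tfrac{2\rho}{L^2}\bigr),
\]
whose right-hand side tends to $0$ as $t\downarrow 0$; this buys an \emph{explicit} lower bound on the admissible $t_*$ in terms of $\eta$, $L$, $\rho$ and $|g(\cdot,u_0)|_{L^\infty}$, which is what one would want for a continuation argument. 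Your version extracts only the qualitative information (continuity of the representative of $v-\chi_{\R_{\geq 0}}\otimes u_0$ from Lemma \ref{lem:ex:trace} together with $v(0+)=u_0$ from Theorem \ref{thm:ivode}) and chooses $t_*$ by continuity; this is shorter and perfectly sufficient for the stated theorem, and your final paragraph correctly resolves the only delicate point, namely that the identification happens at the level of the $H_{\rho,0}$-equation $\partial_{0,\rho}(v-\chi_{\R_{\geq 0}}\otimes u_0)=F_\rho(v)$. One small inaccuracy you share with the paper: it is not true that $F_\rho(\phi)=0$ for $\phi$ supported in $(-\infty,0)$, since $F_\rho(\phi)(t)=\chi_{[0,T]}(t)\,g(t,P(0))$ need not vanish; what the cut-off $\chi_{[0,T]}$ really gives is $\supp F_\rho(\phi)\subseteq\R_{\geq 0}$, which together with causality of $\partial_{0,\rho}^{-1}$ is exactly what is needed to run the fixed-point argument showing $\chi_{\R_{<0}}(m_0)v=0$ and hence $v(0+)=u_0$. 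So the conclusion of Theorem \ref{thm:ivode} you rely on does hold, but you should justify it via the support statement rather than the (false) vanishing statement.
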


\begin{proof} For each $t\in[0,T]$ we obtain due to causality of
$F_{\rho}$ \begin{align*}
\partial_{0,\rho}(\chi_{\mathbb{R}_{\leq t}}(m_{0})v+\chi_{\mathbb{R}_{>t}}\otimes v(t)) & =\chi_{\mathbb{R}_{\leq t}}(m_{0})\partial_{0,\rho}v\\
 & =\chi_{\mathbb{R}_{\leq t}}(m_{0})F_\rho(v)+\delta\otimes u_{0}\\
 & =\chi_{\mathbb{R}_{\leq t}}(m_{0})F_\rho(\chi_{\mathbb{R}_{\leq t}}(m_{0})v)+\delta\otimes u_{0}\end{align*}
 and thus we estimate using the first inequality in Lemma \ref{lem:ex:trace} \begin{align*}
 & \sqrt{2\rho}\sup\{|v(s)-u_{0}|_{H}e^{-\rho s}\,|\, s\in[0,t]\}\\
 & \leq|\partial_{0,\rho}(\chi_{\mathbb{R}_{\leq t}}(m_{0})v+\chi_{\mathbb{R}_{>t}}\otimes v(t)-\chi_{\mathbb{R}_{\geq0}}\otimes u_{0})|_{\rho,0}\\
 & =|\chi_{\mathbb{R}_{\leq t}}(m_{0})F_\rho(\chi_{\mathbb{R}_{\leq t}}(m_{0})v)|_{\rho,0}\\
 & \leq|\chi_{\mathbb{R}_{\leq t}}(m_{0})F_\rho(\chi_{\mathbb{R}_{\leq t}}(m_{0})v)-\chi_{\mathbb{R}_{\leq t}}(m_{0})F_\rho(\chi_{[0,t]}\otimes u_{0})|_{\rho,0}+|\chi_{\mathbb{R}_{\leq t}}(m_{0})F_\rho(\chi_{[0,t]}\otimes u_{0})|_{\rho,0}\\
 & \leq L|\chi_{\mathbb{R}_{\leq t}}(m_{0})v-\chi_{[0,t]}\otimes u_{0}|_{\rho,0}+|g(\cdot,u_{0})|_{L^{\infty}([0,t];H)}\sqrt{\frac{1-e^{-2\rho t}}{2\rho}}\\
 & \leq L\sup\{|v(s)-u_{0}|_{H}e^{-\rho s}\,|\, s\in[0,t]\}\sqrt{t}+|g(\cdot,u_{0})|_{L^{\infty}([0,t];H)}\sqrt{\frac{1-e^{-2\rho t}}{2\rho}}.\end{align*}
 If we choose $t<\frac{2\rho}{L^2}$ we can conclude that \[
\sup\{|v(s)-u_{0}|_{H}e^{-\rho s}\,|\, s\in[0,t]\}\leq\frac{\sqrt{\frac{1-e^{-2\rho t}}{2\rho}}}{\sqrt{2\rho}-L\sqrt{t}}|g(\cdot,u_{0})|_{L^{\infty}([0,t];H)}.\]
 Hence, \[
\sup\{|v(s)-u_{0}|_{H}\,|\, s\in[0,t]\}\leq e^{\rho t}\frac{\sqrt{\frac{1-e^{-2\rho t}}{2\rho}}}{\sqrt{2\rho}-L\sqrt{t}}|g(\cdot,u_{0})|_{L^{\infty}([0,t];H)}.\]
 Therefore, we can find $t_{\ast}\in[0,T]$ such that \[
\sup\{|v(s)-u_{0}|_{H}\,|\, s\in[0,t_{\ast}]\}\leq\eta,\]
 or in other words $v[[0,t_{\ast}]]\subseteq\overline{B_{H}(u_{0},\eta)}$
which implies \[
\partial_{0,\rho}v(t)=F_\rho(v)(t)=g(t,v(t))\]
 for each $t\in(0,t_{\ast})$. Since the initial condition $v(0)=u_{0}$
is satisfied by definition of $v$ and Theorem \ref{thm:ivode}, the assertion follows. \end{proof}

\subsection{Classical Delay Equations}

In this section we apply the existence theory to classical delay equations, in particular integro-differential and neutral equations. As a first step we observe that delay equations typically show a special structure which can be utilized to formulate natural assumptions for existence of solutions.

\subsubsection{A Structural Observation}\label{sec:strob}

In many applications it turns out, that the function $F$ of Theorem
\ref{thm:Picard-Lindeloef} factorizes as $F=\Phi\circ\Theta$, where
for Hilbert spaces $H$ and $V$ we have
\[ \Phi: \bigcap_{\eta \in \R_{>0}} H_{\eta,0}(\R)\otimes V \to \interior C_\infty^+(\R;H)'
\]
 and
\[
\Theta : \interior C_\infty(\R;H) \to \bigcap_{\eta \in \R_{>0}} H_{\eta,0}(\R)\otimes V
\]
 with appropriate Hilbert spaces $H$ and $V$ and suitable conditions on $\Phi$ and $\Theta$, which we specify later on.
Thus, we arrive at a specialized form of the general problem (\ref{eq:ode})
given by \[
\partial_{0}u=\Phi\left(\Theta u\right).\]
 We will prove well-posedness results for two particular cases. The
first one, Theorem \ref{thm:discrete}, describes discrete delay, the second one, Theorem \ref{thm:full_past}, deals with the whole past of $u$.
\begin{thm}\label{thm:discrete}
Let $N\in\mathbb{N}$, $H$ Hilbert space, let
$\theta_{0},\ldots\theta_{N-1}\in \mathbb{R}_{\leq0}$ be distinct,
$s\in (0,1)$, $\rho_{0}\in \R_{>0}$, $\Phi : \interior
C_\infty(\R;H^N)\to \interior C_\infty^+(\R;H)'$. Assume that for
all $\rho \in \R_{>\rho_0}$, there is $K\in\R_{>0}$ such that for all $u,w\in \interior C_\infty(\R;H^N)$ and
$\psi \in \interior C_\infty^+(\R;H)$ we have
\[
   \abs{\Phi(0)(\psi)}\leq K\abs{\psi}_{-\rho,1}
   \quad\text{and}\quad
   \abs{\Phi(u)(\psi)-\Phi(w)(\psi)}\leq s \abs{\psi}_{-\rho,1}
   \abs{u-w}_{\rho,0}.
\]
Denote by $\Phi_{\rho}$ the continuous extension of $\Phi$ as a
mapping from $H_{\rho,0}\left(\mathbb{R}\right)\otimes H^{N}$ to $
H_{\rho,-1}\left(\mathbb{R}\right)\otimes H$. For $\rho\in\R_{>\rho_{0}}$
let $\Theta_{\rho}:H_{\rho,0}\left(\mathbb{R}\right)\otimes H\to
H_{\rho,0}\left(\mathbb{R}\right)\otimes H^{N}$ be given by \[
\Theta_{\rho}x=\left(\tau_{\theta_{0}}x,\ldots,\tau_{\theta_{N-1}}x\right)\in
H_{\rho,0}\left(\mathbb{R}\right)\otimes H^{N}.\]
 Then, for $\rho$ large enough, the equation \[
\partial_{0,\rho}u=\Phi_{\rho}\left(\Theta_{\rho}(u)\right)\]
 admits a unique solution $u\in H_{\rho,0}(\mathbb{R})\otimes H$. Moreover, the solution operator is causal.\end{thm}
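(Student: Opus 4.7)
The strategy is to set $F:=\Phi\circ\Theta$ on $\interior C_\infty(\R;H)$ and show that $F$ satisfies the hypotheses of Theorem \ref{thm:Picard-Lindeloef}; the delay equation is then just \eqref{eq:ode} for this particular right-hand side. The only non-trivial point is to control the contraction constant, and this is where the hypothesis that the $\theta_j\leq 0$ are pairwise distinct will enter.

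For $x\in\interior C_\infty(\R;H)$ the translates $\tau_{\theta_j}x$ again lie in $\interior C_\infty(\R;H)$, so under the natural identification $\interior C_\infty(\R;H)^N\cong\interior C_\infty(\R;H^N)$ the composition $F=\Phi\circ\Theta$ is a well-defined map into $\interior C_\infty^+(\R;H)'$. Since $\Theta(0)=0$, the first bound of \eqref{eq:estimate} is inherited directly: $|F(0)(\psi)|=|\Phi(0)(\psi)|\leq K|\psi|_{-\rho,1}$. For the Lipschitz bound, I would combine the hypothesis on $\Phi$ with the operator norm of the time translation computed in Example \ref{ex:time_translation}, namely $|\tau_{\theta_j}v|_{\rho,0}=e^{\rho\theta_j}|v|_{\rho,0}$, to obtain
\[
  |F(u)(\psi)-F(w)(\psi)|\leq s\,|\psi|_{-\rho,1}\,|\Theta u-\Theta w|_{\rho,0}\leq s\,\Bigl(\sum_{j=0}^{N-1}e^{2\rho\theta_j}\Bigr)^{\!1/2}|\psi|_{-\rho,1}\,|u-w|_{\rho,0}.
\]

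The main obstacle is to verify that the coefficient $s\,(\sum_j e^{2\rho\theta_j})^{1/2}$ is bounded by some fixed $s'<1$ uniformly in all sufficiently large $\rho$. Since the $\theta_j\leq 0$ are pairwise distinct, at most one of them equals $0$, so $\sum_{j=0}^{N-1}e^{2\rho\theta_j}\to\#\{j:\theta_j=0\}\in\{0,1\}$ as $\rho\to\infty$; combined with $s<1$, this lets me pick $s':=(s+1)/2\in(s,1)$ and $\rho_1\geq\rho_0$ large enough that $s\,(\sum_j e^{2\rho\theta_j})^{1/2}\leq s'$ for every $\rho\geq\rho_1$. Thus the hypotheses of Theorem \ref{thm:Picard-Lindeloef} hold with $\rho_0$ replaced by $\rho_1$ and $s$ replaced by $s'$, yielding for each such $\rho$ a unique $u\in H_{\rho,0}(\R)\otimes H$ satisfying $\partial_{0,\rho}u=F_\rho(u)$. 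Uniqueness of Lipschitz extensions forces $F_\rho=\Phi_\rho\circ\Theta_\rho$, so $u$ solves the equation in the statement; Theorem \ref{thm:causality} then yields the causality of the solution operator $\partial_{0,\rho}^{-1}F_\rho$ at once.
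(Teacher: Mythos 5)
Your proof is correct and follows essentially the same route as the paper: bound $\abs{\Theta_\rho}_{\mathrm{Lip}}$ by $\bigl(\sum_{j}e^{2\rho\theta_j}\bigr)^{1/2}$ via the operator norm of $\tau_{\theta_j}$, observe that distinctness of the $\theta_j\leq 0$ makes this tend to $0$ or $1$ as $\rho\to\infty$, and then invoke Theorem \ref{thm:Picard-Lindeloef} and Theorem \ref{thm:causality}. You are in fact slightly more explicit than the paper about why the distinctness hypothesis is needed (at most one $\theta_j$ can vanish, so the $\limsup$ of the Lipschitz constant is at most $1$).
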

\begin{proof} Let $\rho\in \R_{>\rho_0}$. We observe that $\lVert\tau_{h}\rVert_{H_{\rho,0}(\R)\otimes H \to
H_{\rho,0}(\R) \otimes H} \leq e^{\rho h}$ for all $h\leq 0$, cf.\ Example
\ref{ex:time_translation}. Thus, $\limsup_{\rho\to\infty}\abs{\Theta_\rho}_{\text{Lip}}\leq 1$.
Hence, $F:= \Phi_\rho \circ \Theta_\rho$ satisfies the assumptions
of Theorem \ref{thm:Picard-Lindeloef} if $\rho$ is chosen large enough. Causality follows from
Theorem \ref{thm:causality}.
\end{proof}
We introduce the mapping, which assigns to a function the respective past.
\begin{defn} Let $H$ be a Hilbert space. For a function $\varphi : \R \to H$, we define
\begin{eqnarray*}
\varphi_{\left(\cdot\right)}:\; H^{\mathbb{R}} & \to & \left(H^{\mathbb{R}_{<0}}\right)^{\mathbb{R}}\\
\varphi & \mapsto & \left(t\mapsto\left(\theta\mapsto\varphi\left(t+\theta\right)\right)\right).\end{eqnarray*}
\end{defn}

Without additional effort, we can easily consider classical delay equations involving the whole past of $u$, e.g.,\ equations with unbounded delay.

\begin{thm}\label{thm:full_past} Let $H$ be a Hilbert space, $C,\rho_0\in \R_{>0}$, $s\in(0,1/2)$. Let $\Phi : \bigcap_{\eta\in\R_{>0}} H_{\eta,0}(\R)\otimes L_2(\R_{<0};H) \to \interior C_\infty^+(\R;H)'$ be such that for $\rho\in \R_{>\rho_0}$, there is $K\in\R_{>0}$ such that for all $u,w\in \bigcap_{\eta\in\R_{>0}} H_{\eta,0}(\R)\otimes L_2(\R_{<0};H)$ and $\psi \in \interior C_\infty^+(\R;H)$ we have
\[
   \abs{\Phi(0)(\psi)}\leq K\abs{\psi}_{-\rho,1}
   \quad\text{and}\quad
   \abs{\Phi(u)(\psi) - \Phi(w)(\psi)}\leq C \rho^s \abs{\psi}_{-\rho,1} \abs{u-w}_{\rho,0}.
\]
For $\rho \in \R_{>\rho_0}$ let $\Phi_\rho$ denote the Lipschitz continuous extension of $\Phi$ as a mapping from $H_{\rho,0}(\R)\otimes H$ to $H_{\rho,-1}(\R)\otimes H$. Then for $\rho \in \R$ such that $\rho>\max\{ \left(\frac C{\sqrt 2}\right)^{\frac{2}{1-2s}},\rho_0\}$ the equation
\[
   \partial_{0,\rho} u = \Phi_\rho(u_{(\cdot)})
\]
admits a unique solution. Moreover, the solution operator is causal.
\end{thm}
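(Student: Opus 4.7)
The plan is to reduce this to Theorem \ref{thm:Picard-Lindeloef} applied to the composed right-hand side $F(u) := \Phi(u_{(\cdot)})$. The only nontrivial step is to control the ``history map'' $u\mapsto u_{(\cdot)}$ in the weighted $L^2$-norm and to check that the resulting Lipschitz constant can be made strictly less than one by choosing $\rho$ large.

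First I would verify that for $\rho\in \R_{>0}$ the map
\[
   u \mapsto u_{(\cdot)}, \quad H_{\rho,0}(\R)\otimes H \to H_{\rho,0}(\R)\otimes L^2(\R_{<0};H),
\]
is bounded with operator norm at most $(2\rho)^{-1/2}$. For $u\in \interior C_\infty(\R;H)$, Fubini's theorem and the substitution $s=t+\theta$ give
\begin{align*}
  \abs{u_{(\cdot)}}_{\rho,0}^2
  & = \int_\R \int_{-\infty}^0 \abs{u(t+\theta)}_H^2 \,d\theta \, e^{-2\rho t}\,dt \\
  & = \int_{-\infty}^0 e^{2\rho\theta} \int_\R \abs{u(s)}_H^2 e^{-2\rho s}\,ds\,d\theta
    = \frac{1}{2\rho}\,\abs{u}_{\rho,0}^2.
\end{align*}
By density this extends to all of $H_{\rho,0}(\R)\otimes H$, and in particular $u_{(\cdot)}\in \bigcap_{\eta>0} H_{\eta,0}(\R)\otimes L^2(\R_{<0};H)$ whenever $u$ lies in the corresponding intersection, so the composition $\Phi \circ (\cdot)_{(\cdot)}$ is well-defined on $\interior C_\infty(\R;H)$.

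Next, for $\rho \in \R_{>\rho_0}$ and $u,w\in \interior C_\infty(\R;H)$, $\psi\in \interior C_\infty^+(\R;H)$, combining the hypothesis on $\Phi$ with the above norm estimate yields
\[
   \abs{F(u)(\psi)-F(w)(\psi)} \leq C\rho^s \abs{\psi}_{-\rho,1}\abs{u_{(\cdot)}-w_{(\cdot)}}_{\rho,0} \leq \frac{C}{\sqrt{2}}\,\rho^{s-1/2}\,\abs{\psi}_{-\rho,1}\abs{u-w}_{\rho,0},
\]
together with $\abs{F(0)(\psi)}\leq K\abs{\psi}_{-\rho,1}$. Under the hypothesis $\rho>\max\{(C/\sqrt{2})^{2/(1-2s)},\rho_0\}$ we have $s' := \frac{C}{\sqrt{2}}\rho^{s-1/2}<1$, since the exponent $s-1/2$ is negative. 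Hence $F$ satisfies the assumptions of Theorem \ref{thm:Picard-Lindeloef}, and a unique $u\in H_{\rho,0}(\R)\otimes H$ with $\partial_{0,\rho}u = F_\rho(u) = \Phi_\rho(u_{(\cdot)})$ exists. Causality of the solution operator follows from Theorem \ref{thm:causality}.

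The main obstacle, and the only place the specific structure of the equation enters, is the norm estimate for the history map; once this gives the gain $\rho^{-1/2}$, it exactly compensates the allowed $\rho^s$-growth in the Lipschitz bound for $\Phi$ and yields the threshold $(C/\sqrt{2})^{2/(1-2s)}$ stated in the theorem.
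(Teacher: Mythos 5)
Your proposal is correct and follows essentially the same route as the paper: the identical Fubini/substitution computation gives the factor $(2\rho)^{-1/2}$ for the history map $\Theta:u\mapsto u_{(\cdot)}$, which combined with the hypothesis yields the Lipschitz bound $\frac{C}{\sqrt{2}}\rho^{s-1/2}<1$ for $\rho>\left(\frac{C}{\sqrt{2}}\right)^{2/(1-2s)}$, after which Theorem \ref{thm:Picard-Lindeloef} and Theorem \ref{thm:causality} finish the argument exactly as in the paper.
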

\begin{proof} Introduce the mapping
\[
  \Theta : \interior  C_\infty^+(\R;H) \to \bigcap_{\eta\in \R_{>0}} H_{\eta,0}(\R)\otimes L^2(\R_{<0};H): \varphi\mapsto \varphi_{(\cdot)}.
\]
We compute a possible Lipschitz constant for $\Theta$ considered as a mapping from $H_{\rho,0}(\R)\otimes H$ to $H_{\rho,0}(\R)\otimes L^2(\R_{<0};H)$ for $\rho\in \R_{>0}$. Let $u,w \in \interior C_\infty^+(\R;H)$. Then we have
\begin{align*}
 \abs{\Theta(u)-\Theta(w)}_{\rho,0}^{2}&= \int_{\mathbb{R}}\int_{\mathbb{R}_{<0}}\abs{u(t+\theta)-w(t+\theta)}_{H}^{2}d\theta \exp(-2\rho t)dt\\
 & = \int_{\mathbb{R}_{<0}}\int_{\mathbb{R}}\abs{u(t+\theta)-w(t+\theta)}_{H}^{2}\exp(-2\rho(t+\theta))dt\exp(2\rho\theta)d\theta\\
 & = \frac{1}{2\rho}\abs{u-w}_{\rho,0}^{2}.\end{align*}
 Now, let $\rho \in \R_{>\rho_0}$, $u,w \in \interior C_\infty^+(\R;H)$ and $\psi \in \interior C_\infty^+(\R;H)$. Then
 \[
    \abs{\Phi (\Theta u)(\psi)-\Phi(\Theta w)(\psi)}\leq C \rho^s \abs{\psi}_{-\rho,1}\abs{\Theta u-\Theta w}_{\rho,0}\leq \frac C{\sqrt 2}\rho^{s-1/2}\abs{\psi}_{-\rho,1}\abs{ u-w}_{\rho,0}.
 \]
 For $\rho > \max\{\left(\frac C{\sqrt 2}\right)^{\frac{2}{1-2s}},\rho_0\}$ the assumptions on $F$ in Theorem \ref{thm:Picard-Lindeloef} are satisfied for the choice $F= \Phi\circ \Theta$. Causality follows from Theorem \ref{thm:causality}.
\end{proof}

Note that the Lipschitz continuity assumption on $\Phi$ in Theorem \ref{thm:full_past} allows the Lipschitz constant to grow moderately for increasing $\rho$.

\begin{example}\label{ex:full_past}~

\begin{enumerate}[(a)]
  \item\label{ex:full_past1} The assumptions of Theorem \ref{thm:full_past} are satisfied for the following situation. Let $H$ be a Hilbert space, $g:\mathbb{R}\times L_{2}(\mathbb{R}_{<0};H)\to H$
with the property that there is $L,K\in\R_{>0}$ such that for all $t\in\mathbb{R},x,y\in L_{2}(\mathbb{R}_{<0})\otimes H$ we have
\[
   \abs{g(t,0)}\leq K
   \quad\text{and}\quad
   \lvert g(t,x)-g(t,y)\rvert_H\leq L\lvert x-y\rvert.
\]
Then $\Phi$ given by $\Phi(u) := (t\mapsto g(t,u(t)))$ satisfies the assumption of Theorem \ref{thm:full_past}.
  \item In turn, a possible choice for $g$ may be as follows. Let $h:\mathbb{R}\times\mathbb{R}_{<0}\times H\to H$ be such that for all $t\in\R$, $\theta \in \R_{<0}$ and $x,z\in H$ we have \[
\left|h\left(t,\theta,x\right)-h\left(t,\theta,z\right)\right|_{H}\leq L\left|x-z\right|_{H}\]
and $h(t,\theta,0)=0$.
 Then $g:\mathbb{R}\times L_{2}(\mathbb{R}_{<0})\otimes H\to H$ defined
as \[
g(t,x)=\int_{-\infty}^{0}h(t,\theta,x(\theta))d\theta\]
satisfies the condition in Example \ref{ex:full_past}\eqref{ex:full_past1}.
\end{enumerate}
\end{example}

\begin{rem}[Initial value problems for delay differential equations] Let $C, \rho_0\in \R_{>0}$ and let $\Phi : \bigcap_{\eta\in \R_{>0}} H_{\eta,0}(\R)\otimes L^2(\R_{<0};H) \to \interior C_\infty^+(\R;H)$ satisfy the following. For all $\rho\in \R_{>\rho_0}$ there exists $K\in\R_{>0}$ such that for all $u,w\in \interior C_\infty(\R;L^2(\R_{<0};H))$ and $\psi\in \interior C_\infty^+(\R;H)$ it holds
\[
   \abs{\Phi(0)(\psi)}\leq K\abs{\psi}_{-\rho,0}
   \quad\text{and}\quad
   \abs{\Phi(u)(\psi)-\Phi(w)(\psi)}\leq C\abs{\psi}_{-\rho,0}\abs{u-w}_{\rho,0}.
\]
Denoting the Lipschitz continuous extension of $\Phi$ to a mapping from $H_{\rho,0}(\R)\otimes L^2(\R_{<0};H)$ to $H_{\rho,0}(\R)\otimes H$ by $\Phi_\rho$, we want to discuss a delay differential equation for a given $x_{-\infty} \in L^2(\R_{<0};H)\cap C(\R_{<0};H)$
\begin{equation}\label{rem:eq:ivdde1}
   \partial_{0,\rho} u(t) = \Phi (u_{(t)}) \textnormal{ for }t\in \R_{>0} 
   \quad\text{and}\quad 
   u_{(0)}= x_{-\infty}.
\end{equation}
Consider the following problem
\begin{equation}\label{rem:eq:ivdde2}
   \partial_{0,\rho} w = \chi_{\R_{>0}}(m_0)\Phi((x_{-\infty})_{(\cdot)}+w_{(\cdot)}) +\delta \otimes x_{-\infty}(0-).
\end{equation}
Then, according to Theorem \ref{thm:ivode}, there exists a unique solution $w \in H_{\rho,0}(\R)\otimes H$ of Equation \eqref{rem:eq:ivdde2} such that $w-\chi_{\R_{>0}}(m_0)w \in H_{\rho,1}(\R)\otimes H$ holds. Moreover, causality of $\partial_{0,\rho}^{-1}$ implies that $\supp w \subseteq \R_{\geq 0}$. Indeed, we have
\begin{align*}
   \chi_{\R_{<0}}(m_0)w & = \chi_{\R_{<0}}(m_0)\partial_{0,\rho}^{-1} \left(\chi_{\R_{>0}}(m_0)\Phi((x_{-\infty})_{(\cdot)}+w_{(\cdot)}) +\delta \otimes x_{-\infty}(0-)\right) \\
                        & = \chi_{\R_{<0}}(m_0)\partial_{0,\rho}^{-1} \chi_{\R_{>0}}(m_0)\Phi((x_{-\infty})_{(\cdot)}+w_{(\cdot)}) +\chi_{\R_{<0}}(m_0)\partial_{0,\rho}^{-1} \delta \otimes x_{-\infty}(0-) \\
                        & = \chi_{\R_{<0}}(m_0)\partial_{0,\rho}^{-1} \chi_{\R_{<0}}(m_0)\chi_{\R_{>0}}(m_0)\Phi((x_{-\infty})_{(\cdot)} + w_{(\cdot)}) \\
                        & \quad +\chi_{\R_{<0}}(m_0)\chi_{\R_{>0}}(m_0) \otimes x_{-\infty}(0-)\\
                        & = 0.
\end{align*}
  By construction, $w$ satisfies the initial condition $w(0+) = x_{-\infty}(0-)$. Moreover, by setting
\[
   u : t\mapsto \begin{cases}  x_{-\infty}(t), & t\in\R_{<0}\\
   														 w(t), & t\in \R_{\geq 0}
                \end{cases}
\]
we have $\partial_{0,\rho} w(t) = \Phi (u_{(t)}) \textnormal{ for }t\in \R_{>0}$. Thus, $u$ is the desired solution of \eqref{rem:eq:ivdde1}. The uniqueness of $u$ follows from the uniqueness of $w$.
\end{rem}

\subsubsection{Integro-Differential Equations}


In this section we want to give examples for differential equations
leading to causal solution operators. The latter is closely related
to functions of the time derivative or its inverse in the sense of Definition \ref{def:funcalc}.
We state some examples of functions of the time-derivative; we refer
to \cite{Pi2009-1} and \cite{Wau2011}.
\begin{example}\label{exam:funcalc}
~
\begin{enumerate}[(a)]
\item Convolutions (1): Consider $k\in L^{2}(\mathbb{R})\cap L^{1}(\mathbb{R})$
with $\inf\supp k=0$ as a convolution kernel. Then, by the Paley-Wiener
theorem (cf. \cite[Chapter 19]{Rudin}), the Fourier transform $\hat{k}$ of $k$ belongs to the Hardy-Lebesgue
space, in particular, $\hat{k}$ is analytic as a function of $\mathbb{R}-i\mathbb{R}_{>0}$ (the lower complex half plane)
to $\mathbb{C}$. Moreover, since $k\in L_{1}(\mathbb{R})$, the operator
$k*\colon\varphi\mapsto k*\varphi$ is a bounded operator in $H_{\rho,0}(\mathbb{R})$
for any $\rho\in \R_{>0}$. In order to prove that $k*$ is a function of $\partial_{0,\rho}^{-1}$,
it suffices to consider the following. For $g\in H_{\rho,0}(\mathbb{R})$,
we have \[
k*g=\mathcal{L}_{\rho}^{*}\mathcal{L}_{\rho}(k*g)=\sqrt{2\pi}\mathcal{L}_{\rho}^{*}\left(\mathcal{L}_{\rho}k\mathcal{L}_{\rho}g\right)=\sqrt{2\pi}\mathcal{L}_{\rho}^{*}\hat{k}(m_0-i\rho)\mathcal{L}_{\rho}g,\]
 thus interpreting $\mathcal{L}_{\rho}k$ as the multiplication operator $\hat{k}(m_0-i\rho)$,
we are in the situation of $M$ above, where $M(z)=\hat{k}(-i\frac{1}{z})$.
\item Convolutions (2): Let $\varepsilon\in \R_{>0}$, $M:B_{\mathbb{C}}(0,\varepsilon)\to L(H)$
analytic. In \cite[Theorem 1.5.6 and Remark 1.5.7]{Wau2011} it is
shown that for $\rho\in \R_{>\frac{2}{\varepsilon}}$ there is $k\in L^{1}(\mathbb{R},\exp(-\rho t)\mathrm{d}t;L(H))$
such that \[
\left(M(\partial_{0,\rho}^{-1})g\right)(t)=M(0)g(t)+\int_{\mathbb{R}}k(t-s)g(s) ds\quad(g\in\interior C_{\infty}\left(\mathbb{R},H\right)).\]

\item\label{exam:funcalc3} Time shift: For $r,h\in \R_{>0}$ consider \[
M(z):=\exp(-z^{-1}h)\quad(z\in B_{\mathbb{C}}(r,r)).\]
 Then for $\rho\in \R_{>\frac{1}{2r}}$ the operator $M(\partial_{0,\rho}^{-1})$
is given by $\tau_{-h}$, cf. Example \ref{ex:time_translation}.
\item Fractional integrals $\partial_{0,\rho}^{-\alpha}$, $\alpha\in\left[0,1\right],\rho\in \R_{>0}$.
\end{enumerate}
\end{example}
We summarize our findings in the following theorem.
\begin{thm}\label{thm:IntegroDiff}
Let $H$ be a Hilbert space, $r\in\mathbb{R}_{>0}$ and let $M,N:B_{\mathbb{C}}(r,r)\to L(H)$
be analytic and bounded. Furthermore, let $F: \interior C_\infty(\R;H) \to \interior C_\infty^+(\R;H)$ be as in the Picard-Lindel\"of Theorem \ref{thm:Picard-Lindeloef}. Moreover, assume that for $\rho \in \R_{>\rho_0}$ the respective continuous extensions $F_\rho$ of $F$ as mappings from $H_{\rho,0}(\R)\otimes H$ to $H_{\rho,-1}(\R)\otimes H$ satisfy
\[
   \limsup_{\rho\to\infty}\abs{F_\rho}_{\textnormal{Lip}}< \sup_{z\in B(r,r)}\lVert M(z)\rVert_{L(H)}.\sup_{z\in B(r,r)}\lVert N(z)\rVert_{L(H)}
\] Then there is $\rho_1\in \R_{>\max\{\rho_0,\frac1{2r}\}}$ such that for all $\rho\in\R_{>\rho_1}$ the equation \[
\partial_{0,\rho}u=M(\partial_{0,\rho}^{-1})F_\rho(N(\partial_{0,\rho}^{-1})u)\]
 admits a unique solution $u\in H_{\rho,0}(\mathbb{R})\otimes H$.
Moreover, the solution operator is causal. \end{thm}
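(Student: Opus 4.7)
The plan is to reduce the problem to Theorem~\ref{thm:Picard-Lindeloef} by absorbing the operators $M(\partial_{0,\rho}^{-1})$ and $N(\partial_{0,\rho}^{-1})$ into a new right-hand side. First, I would fix $\rho_1 > \max\{\rho_0, 1/(2r)\}$ large enough so that the functional calculus of Definition~\ref{def:funcalc} applies for all $\rho\in\R_{>\rho_1}$. By Remark~\ref{Rem: rho-Independent}\eqref{Rem: rho-Independent0}, $N(\partial_{0,\rho}^{-1})$ is a bounded operator on $H_{\rho,0}(\R)\otimes H$ with operator norm at most $\sup_{z\in B_\mathbb{C}(r,r)}\lVert N(z)\rVert_{L(H)}$, and by Remark~\ref{Rem: rho-Independent}\eqref{Rem: rho-Independent2}, $M(\partial_{0,\rho}^{-1})$ extends to a bounded operator on $H_{\rho,-1}(\R)\otimes H$, with the same bound on its norm.

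Next, I would define
\[
  G_\rho : H_{\rho,0}(\R)\otimes H \to H_{\rho,-1}(\R)\otimes H, \qquad G_\rho(u) := M(\partial_{0,\rho}^{-1})\,F_\rho\bigl(N(\partial_{0,\rho}^{-1})u\bigr),
\]
and verify that $G_\rho$ satisfies the hypotheses required in Theorem~\ref{thm:Picard-Lindeloef}. The bound on $G_\rho(0)$ follows from composing the bound on $F_\rho(0)$ with the operator norm of $M(\partial_{0,\rho}^{-1})$. For the Lipschitz estimate, I would chain the three bounds:
\[
  \lvert G_\rho(u)-G_\rho(w)\rvert_{\rho,-1} \leq \sup_{z\in B(r,r)}\lVert M(z)\rVert_{L(H)} \cdot \lvert F_\rho\rvert_{\mathrm{Lip}} \cdot \sup_{z\in B(r,r)}\lVert N(z)\rVert_{L(H)} \cdot \lvert u-w\rvert_{\rho,0}.
\]
By hypothesis, $\limsup_{\rho\to\infty}\lvert F_\rho\rvert_{\mathrm{Lip}}$ is strictly smaller than $1/(\sup\lVert M\rVert \cdot \sup\lVert N\rVert)$ (this is the intended reading of the displayed inequality), so for $\rho$ sufficiently large the Lipschitz constant of $G_\rho$ is strictly less than $1$. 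Hence Theorem~\ref{thm:Picard-Lindeloef} applies to $\partial_{0,\rho}u = G_\rho(u)$ and yields the desired unique solution $u\in H_{\rho,0}(\R)\otimes H$.

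For causality of the solution operator, I would appeal directly to Theorem~\ref{thm:causality}, since the uniform (in $\rho$) Lipschitz estimate on $G_\rho$ places us in precisely the setting of that theorem; alternatively, one may observe that $M(\partial_{0,\rho}^{-1})$ and $N(\partial_{0,\rho}^{-1})$ are causal by Theorem~\ref{thm:funcalc}, and composition preserves causality, so $\partial_{0,\rho}^{-1}G_\rho$ is causal.

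The only real subtlety is the bookkeeping around the Lipschitz estimate for $G_\rho$: because $F_\rho$ maps into $H_{\rho,-1}(\R)\otimes H$, the outer factor $M(\partial_{0,\rho}^{-1})$ must be applied on the extrapolation space, which is why Remark~\ref{Rem: rho-Independent}\eqref{Rem: rho-Independent2} is needed. Everything else reduces to the standard contraction argument already handled in Theorem~\ref{thm:Picard-Lindeloef}, so I do not expect any serious obstacle beyond choosing $\rho_1$ large enough to simultaneously absorb the $\limsup$ in the hypothesis and meet the functional-calculus threshold $1/(2r)$.
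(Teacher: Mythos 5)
Your proposal is correct and follows essentially the same route as the paper's proof: bound the operator norms of $M(\partial_{0,\rho}^{-1})$ and $N(\partial_{0,\rho}^{-1})$ via Remark \ref{Rem: rho-Independent}, conclude that the composed right-hand side is eventually a strict contraction from $H_{\rho,0}(\R)\otimes H$ to $H_{\rho,-1}(\R)\otimes H$, and invoke Theorem \ref{thm:Picard-Lindeloef} together with Theorem \ref{thm:causality}. Your corrected reading of the displayed hypothesis (namely that $\limsup_{\rho\to\infty}\abs{F_\rho}_{\textnormal{Lip}}\cdot\sup\lVert M\rVert\cdot\sup\lVert N\rVert<1$) is exactly what the paper's proof uses implicitly.
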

\begin{proof} By the Remarks \ref{Rem: rho-Independent}\eqref{Rem: rho-Independent0} and \eqref{Rem: rho-Independent2} we have
\[
 \sup_{\rho\in \R_{>1/2r}}\Abs{M(\partial_{0,\rho}^{-1})}_{L(H_{\rho,-1}(\R)\otimes H;H_{\rho,-1}(\R)\otimes H)}\leq  \sup_{z\in B(r,r)}\lVert M(z)\rVert_{L(H)}
\]
and a similar estimate with $M$ replaced by $N$. Thus, for $\rho_1$ large enough $M(\partial_{0,\rho}^{-1})F_\rho(N(\partial_{0,\rho}^{-1})(\cdot)) : H_{\rho,0}(\R)\otimes H \to H_{\rho,-1}(\R)\otimes H$ is a contraction for all $\rho\in \R_{>\rho_1}$. Thus, Theorem \ref{thm:Picard-Lindeloef} applies. Causality follows from Theorem \ref{thm:causality} together with Remark \ref{Rem: rho-Independent}\eqref{Rem: rho-Independent1}.
\end{proof}

\subsubsection{Neutral Differential Equations}

In this section, we consider neutral differential equations, i.e.,\ equations in which the derivative of the solution is
evaluated at a point in the past. We emphasize that a solution theory
of examples of such equations is indeed covered by the results of
the previous section. A qualitative behavior of neutral differential equations of the following type has been discussed in \cite{Wau2011b,Wau2011a}. A solution theory may be stated as follows.
\begin{thm}\label{thm:neutral}
Let $H$ be a Hilbert space, $A,B,C\in L(H)$, $h_1,h_2,\rho_{0}\in \R_{>0}$, \\
 $f\in\bigcap_{\rho\geq\rho_{0}}H_{\rho,0}(\mathbb{R})\otimes H$.
Then the equation \begin{equation}
\partial_{0,\rho}u-C\partial_{0,\rho}\tau_{-h_1}u=Au+B\tau_{-h_2}u+f\label{Neutral}\end{equation}
 admits a unique solution $u\in H_{\rho,0}(\mathbb{R})\otimes H$
for $\rho$ large enough. Moreover, the solution operator is causal. \end{thm}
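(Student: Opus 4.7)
The plan is to move the neutral term to the left-hand side, invert the resulting bounded operator for $\rho$ large, and thereby reduce \eqref{Neutral} to a standard fixed-point problem covered by Corollary \ref{cor:Picard-Lindeloef 3}. Because $C$ acts only on the $H$-factor while $\partial_{0,\rho}$ and $\tau_{-h_1}$ act only on the scalar factor, the three operators pairwise commute on the appropriate tensor-product spaces; in particular, by Example \ref{ex:time_translation}, $\tau_{-h_1}$ has a continuous extension to $H_{\rho,-1}(\R)\otimes H$ commuting with $\partial_{0,\rho}^{-1}$, so also with $\partial_{0,\rho}$. Consequently,
\[
    \partial_{0,\rho}u - C\partial_{0,\rho}\tau_{-h_1}u \;=\; (I - C\tau_{-h_1})\,\partial_{0,\rho} u \quad\text{in } H_{\rho,-1}(\R)\otimes H.
\]

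Next I would invoke Example \ref{ex:time_translation} once more: the operator norm of $\tau_{-h_1}$ on $H_{\rho,0}(\R)\otimes H$ (and on its continuous extension to $H_{\rho,-1}(\R)\otimes H$) equals $e^{-h_1 \rho}$. Hence, for $\rho$ so large that $\Abs{C}_{L(H)}\, e^{-h_1 \rho} \leq \tfrac12$, a Neumann series yields that $I - C\tau_{-h_1}$ is boundedly invertible on both spaces with inverse of norm at most $2$. Applying this inverse, \eqref{Neutral} becomes equivalent to
\[
    \partial_{0,\rho} u \;=\; F_\rho(u), \qquad F_\rho(u) \;:=\; (I - C\tau_{-h_1})^{-1}\bigl(Au + B\tau_{-h_2}u + f\bigr),
\]
and $F_\rho$ is an affine Lipschitz continuous self-mapping of $H_{\rho,0}(\R)\otimes H$ whose Lipschitz constant is bounded above by $2\bigl(\Abs{A}_{L(H)} + \Abs{B}_{L(H)}\,e^{-h_2\rho}\bigr) \leq 2\bigl(\Abs{A}_{L(H)} + \Abs{B}_{L(H)}\bigr)$, uniformly in the admissible $\rho$.

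For $\rho$ additionally larger than this uniform Lipschitz bound (and larger than $\rho_0$ so that $f \in H_{\rho,0}(\R)\otimes H$), we are in the setting of Corollary \ref{cor:Picard-Lindeloef 3} with $k=0$; the test-function form of the estimates is obtained by pairing $F_\rho(u)-F_\rho(w)\in H_{\rho,0}(\R)\otimes H$ against $\psi\in\interior C_\infty^+(\R;H)$ and invoking Cauchy--Schwarz in the $\abs{\cdot}_{-\rho,0}$/$\abs{\cdot}_{\rho,0}$ duality. The corollary then produces a unique $u\in H_{\rho,1}(\R)\otimes H \subseteq H_{\rho,0}(\R)\otimes H$ solving $\partial_{0,\rho}u=F_\rho(u)$, and causality of the solution operator follows at once from Theorem \ref{thm:causality}. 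The only mildly delicate step is the commutation $\partial_{0,\rho}\tau_{-h_1}=\tau_{-h_1}\partial_{0,\rho}$ on the extended space; however, this is immediate from the functional-calculus description of $\tau_{-h_1}$ as $M(\partial_{0,\rho}^{-1})$ with $M(z)=\exp(-z^{-1}h_1)$ established in Example \ref{ex:time_translation}.
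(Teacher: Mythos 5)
Your proposal is correct and follows essentially the same route as the paper: both rewrite \eqref{Neutral} as $\partial_{0,\rho}(1-C\tau_{-h_1})u=(A+B\tau_{-h_2})u+f$, invert $1-C\tau_{-h_1}$ for $\rho$ large using $\Abs{\tau_{-h_1}}=e^{-h_1\rho}$, and reduce to the contraction-based solution theory with causality supplied by Theorem \ref{thm:causality}. The only immaterial difference is that the paper packages this reduction through Theorem \ref{thm:IntegroDiff} with $M(\partial_{0,\rho}^{-1})=(1-C\tau_{-h_1})^{-1}$ and $N(\partial_{0,\rho}^{-1})=A+B\tau_{-h_2}$, whereas you apply Corollary \ref{cor:Picard-Lindeloef 3} directly to the affine map $F_\rho$.
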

\begin{proof}
By the above theorem, it suffices to show that Equation \eqref{Neutral}
can be written in the form \[
\partial_{0,\rho}u=M(\partial_{0,\rho}^{-1})F(N(\partial_{0,\rho}^{-1})u)\]
 for suitable $F,M,N$. In order to construct $F,M,N$ we observe
for $\rho, h\in\R_{>0}$ the following \[
\tau_{-h}=\exp(-h\partial_{0,\rho})=\exp(-h\partial_{\rho})\exp(-h\rho).\]
 As $\partial_{\rho}$ is skew-selfadjoint, $\Abs{\exp(-h\partial_{\rho})}=1$ and
thus, $\Abs{\tau_{-h}}=\exp(-h\rho)$. Moreover, by the above Example \ref{exam:funcalc}\eqref{exam:funcalc3},
$\tau_{-h}$ is a bounded and analytic function of $\partial_{0,\rho}^{-1}$.
Choose $\rho\in \R_{>\rho_{0}}$ such that for $0<h\leq h_1,h_2$ it holds \[
\Abs{C\exp(-h\partial_{0,\rho})}\leq\Abs{C}\Abs{\exp(-h\partial_{0,\rho})}=\Abs{C}\exp(-h\rho)<1.\]
 Then $(1-C\tau_{-h})$ is boundedly invertible. As a composition,
the latter is a function of $\partial_{0,\rho}^{-1}$. We may rewrite \eqref{Neutral} as
\[
\partial_{0,\rho}(1-C\tau_{-h_1})u=(A+B\tau_{-h_2})u+f.\]
 By the choice of $\rho$ we get \[
\partial_{0,\rho}u=(1-C\tau_{-h_1})^{-1}((A+B\tau_{-h_2})u+f).\]
 For $M(\partial_{0,\rho}^{-1}):=(1-C\exp(-h_1\partial_{0,\rho}))^{-1}$, $N(\partial_{0,\rho}^{-1}):=(A+B\exp(-h_2\partial_{0,\rho}))$
and $F(v):=v+f$ for $v,f\in H_{\rho,0}(\mathbb{R})\otimes H$. Then Theorem \ref{thm:IntegroDiff} applies.
\end{proof}
As a concluding example, which illustrates the versatility and utility
of the concepts developed here, we consider a general class of neutral
differential equations. We note that our observation concerning the factorization occurs in slightly different
version than in Section \ref{sec:strob}.
\begin{thm}\label{thm:neutral2} Let $C,\rho_0\in \R_{>0}$, $H$ Hilbert space and let $\Phi: \bigcap_{\eta\in \R_{>0}} H_{\eta,0}\otimes(H\oplus H)\to \bigcap_{\eta\in \R_{>0}}H_{\eta,0}\otimes H$ be such that for all $\rho\in \R_{>\rho_0}$ there is $K\in\R_{>0}$ such that for all $u,w\in\bigcap_{\eta\in\R_{>0}}H_{\eta,0}(\R)\otimes (H\oplus H)$ we have
\[
   \abs{\Phi(0)}_{\rho,0}\leq K
   \quad\text{and}\quad
   \abs{\Phi(u)-\Phi(w)}_{\rho,0}\leq C\abs{u-w}_{\rho,0}.
\]
Let $\alpha:\mathbb{R}\to\mathbb{R}$, $\beta:\mathbb{R}\to\mathbb{R}$ be
bijective, Lipschitz continuous with bounded measurable derivatives
a.e.\ and $\alpha\left(s\right)\geq s$, $\beta\left(s\right)\geq s+\epsilon_{0}$
for some $\epsilon_{0}\in\mathbb{R}_{>0}$ and all $s\in\mathbb{R}$.
Denote by $\Phi_\rho$ the continuous extension of $\Phi$ as a mapping from $H_{\rho,0}(\R)\otimes (H\oplus H)$ to $H_{\rho,0}(\R)\otimes H$ for $\rho\in\R_{>\rho_0}$. Then there is $\rho_1\in \R_{>\rho_0}$ such that for $\rho\in\R_{\geq \rho_1}$, the equation \begin{equation}
\partial_{0,\rho}u=F\left(u\right):=\Phi_\rho\left(u\circ\alpha^{-1},\left(\partial_{0,\rho}u\right)\circ\beta^{-1}\right)\label{eq:neutral}\end{equation}
 admits a unique solution $u\in H_{\rho,0}\otimes H$. Furthermore
the solution operator is causal. \end{thm}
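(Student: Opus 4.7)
The strategy is to recast \eqref{eq:neutral} as a fixed-point problem on $H_{\rho,0}(\R)\otimes H$ amenable to the contraction mapping principle. Any $u\in H_{\rho,0}(\R)\otimes H$ solving \eqref{eq:neutral} must have $\partial_{0,\rho}u$ matching $\Phi_\rho(\ldots)$, which lies in $H_{\rho,0}(\R)\otimes H$ by hypothesis; hence $u\in H_{\rho,1}(\R)\otimes H$. Using the unitarity of $\partial_{0,\rho}\colon H_{\rho,1}(\R)\otimes H \to H_{\rho,0}(\R)\otimes H$ (Theorem \ref{thm:sobolev chain}), I would set $w:=\partial_{0,\rho}u$, so $u=\partial_{0,\rho}^{-1}w$ and \eqref{eq:neutral} becomes the equivalent fixed-point equation
\begin{equation*}
   w \;=\; G(w) \;:=\; \Phi_\rho\bigl(\partial_{0,\rho}^{-1} w \circ \alpha^{-1},\; w \circ \beta^{-1}\bigr) \qquad \text{in } H_{\rho,0}(\R)\otimes H.
\end{equation*}

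The key estimate is a composition bound: for any $\gamma\colon\R\to\R$ bijective Lipschitz with $\gamma'$ bounded a.e.\ and $\gamma(s)\geq s+\delta$ for all $s$ (with $\delta\geq 0$), the change of variables $t=\gamma(s)$ gives, for $\eta>0$ and $f\in H_{\eta,0}(\R)\otimes H$,
\begin{equation*}
   |f\circ \gamma^{-1}|_{\eta,0}^2 \;=\; \int_{\R}|f(s)|_H^2\,e^{-2\eta\gamma(s)}\,\gamma'(s)\,ds \;\leq\; \|\gamma'\|_\infty\,e^{-2\eta\delta}\,|f|_{\eta,0}^2.
\end{equation*}
Applying this with $\gamma=\alpha$ (so $\delta=0$) and $\gamma=\beta$ (so $\delta=\epsilon_0$), combined with $\|\partial_{0,\rho}^{-1}\|=1/\rho$ from Corollary \ref{cor:FLT} and the $C$-Lipschitz bound on $\Phi_\rho$ with respect to $|\cdot|_{\rho,0}$, yields
\begin{equation*}
   |G(w_1)-G(w_2)|_{\rho,0} \;\leq\; C\sqrt{\frac{\|\alpha'\|_\infty}{\rho^2}+\|\beta'\|_\infty\,e^{-2\rho\epsilon_0}}\;|w_1-w_2|_{\rho,0}.
\end{equation*}
Since the coefficient tends to $0$ as $\rho\to\infty$, I can choose $\rho_1\geq\rho_0$ such that $G$ is a strict contraction for every $\rho\geq\rho_1$. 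Banach's fixed-point theorem then gives a unique $w\in H_{\rho,0}(\R)\otimes H$; setting $u:=\partial_{0,\rho}^{-1}w\in H_{\rho,1}(\R)\otimes H$ delivers the required solution, and uniqueness in $H_{\rho,0}(\R)\otimes H$ follows from the reduction above to the unique fixed point of $G$.

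For causality of the solution operator, each of $\partial_{0,\rho}^{-1}$, composition with $\alpha^{-1}$, and composition with $\beta^{-1}$ is causal: the first because $\rho>0$, the others because $\alpha(s)\geq s$ and $\beta(s)\geq s+\epsilon_0$ give $\alpha^{-1}(t)\leq t$ and $\beta^{-1}(t)\leq t$. The remaining subtlety, and the main obstacle, is that $\Phi$ is not assumed causal. To overcome this, I would adapt the strategy from the proof of Theorem \ref{thm:causality}: for $\psi\in \interior C_\infty(\R;H)$ with $\sup\supp\psi\leq a$, use $|\psi|_{-\eta,0}\leq e^{\eta a}|\psi|_{0,0}$ together with the $\eta$-uniform Lipschitz estimate on $\Phi$, and let $\eta\to\infty$; dominated convergence (the integrand being dominated at level $\rho$) makes the limit rigorous and forces the difference to vanish on $\R_{<a}$. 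This propagates causality through $G$ and, by the fixed-point construction, through the solution operator.
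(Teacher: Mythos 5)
Your proposal is correct and follows essentially the same route as the paper: the paper verifies the hypotheses of Corollary \ref{cor:Picard-Lindeloef 2} with $k=1$, whose proof performs exactly your substitution $w=\partial_{0,\rho}u$, and it uses the same change-of-variables bounds $\left|f\circ\alpha^{-1}\right|_{\rho,0}\le\sqrt{\left|\alpha'\right|_{L^\infty(\R)}}\left|f\right|_{\rho,0}$ and $\left|f\circ\beta^{-1}\right|_{\rho,0}\le\sqrt{\left|\beta'\right|_{L^\infty(\R)}}\,e^{-\rho\epsilon_0}\left|f\right|_{\rho,0}$ together with $\lVert\partial_{0,\rho}^{-1}\rVert=1/\rho$ to make the fixed-point map contractive for $\rho$ large. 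Causality is likewise obtained from (the limiting argument of) Theorem \ref{thm:causality}, exactly as you indicate.
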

\begin{proof} Let $\rho\in \R_{>\rho_0}$, $u,w\in \interior C_\infty(\R;H)$. We confirm the condition on $F$ in Corollary \ref{cor:Picard-Lindeloef 2} for $k=1$.
We estimate \begin{eqnarray*}
\left|u\circ\alpha^{-1}-w\circ\alpha^{-1}\right|_{\rho,0}^{2} & = & \int_{\mathbb{R}}\left|u\left(\alpha^{-1}\left(t\right)\right)-w\left(\alpha^{-1}\left(t\right)\right)\right|_{H}^{2}\exp\left(-2\rho t\right)\: dt\\
 & = & \int_{\mathbb{R}}\left|u\left(s\right)-w\left(s\right)\right|_{H}^{2}\exp\left(-2\rho\alpha\left(s\right)\right)\:\left|\alpha^{\prime}\right|\left(s\right)\: ds\\
 & \leq & \left|\alpha^{\prime}\right|_{L^\infty(\R)}\:\int_{\mathbb{R}}\left|u\left(s\right)-w\left(s\right)\right|_{H}^{2}\exp\left(-2\rho s\right)\: ds\\
 & \leq & \left|\alpha^{\prime}\right|_{L^\infty(\R)}\:\left|u-w\right|_{\rho,0}^{2}\end{eqnarray*}
 and similarly \begin{eqnarray*}
\left|u\circ\beta^{-1}-w\circ\beta^{-1}\right|_{\rho,0}^{2} & = & \int_{\mathbb{R}}\left|u\left(s\right)-w\left(s\right)\right|_{H}^{2}\exp\left(-2\rho\beta\left(s\right)\right)\:\left|\beta^{\prime}\right|\left(s\right)\: ds\\
 & \leq & \left|\beta^{\prime}\right|_{L^\infty(\R)}\:\int_{\mathbb{R}}\left|u\left(s\right)-w\left(s\right)\right|_{H}^{2}\exp\left(-2\rho\left(s+\epsilon_{0}\right)\right)\: ds\\
 & \leq & \left|\beta^{\prime}\right|_{L^\infty(\R)}\:\exp\left(-2\rho\epsilon_{0}\right)\left|u-w\right|_{\rho,0}^{2}.\end{eqnarray*}
  Consequently, we get\begin{eqnarray*}
 &  & \left|\Phi\left(u\circ\alpha^{-1},\left(\partial_{0,\rho}u\right)\circ\beta^{-1}\right)-\Phi\left(w\circ\alpha^{-1},\left(\partial_{0,\rho}w\right)\circ\beta^{-1}\right)\right|_{\rho,0}\\
 &  & \leq C\left(\left|u\circ\alpha^{-1}-w\circ\alpha^{-1}\right|_{\rho,0}+\left|\left(\partial_{0,\rho}u\right)\circ\beta^{-1}-\left(\partial_{0,\rho}w\right)\circ\beta^{-1}\right|_{\rho,0}\right)\\
 &  & \leq C \sqrt{\left|\alpha^{\prime}\right|_{L^\infty(\R)}}\:\left|u-w\right|_{\rho,0}+C\sqrt{\left|\beta^{\prime}\right|_{L^\infty(\R)}}\:\exp\left(-\rho\epsilon_{0}\right)\left|\partial_{0,\rho}u-\partial_{0,\rho}w\right|_{\rho,0}\\
 &  & \leq C \frac{1}{\rho}\,\sqrt{\left|\alpha^{\prime}\right|_{L^\infty(\R)}}\:\left|u-w\right|_{\rho,1}+C\sqrt{\left|\beta^{\prime}\right|_{L^\infty(\R)}}\:\exp\left(-\rho\epsilon_{0}\right)\left|u-w\right|_{\rho,1}\end{eqnarray*}
 Hence, $F : H_{\rho,1}(\R)\otimes H\to H_{\rho,0}(\R)\otimes H$ satisfies the condition in Corollary \ref{cor:Picard-Lindeloef 2}.
 Causality follows from Theorem \ref{thm:causality}.
\end{proof}
\begin{example} A typical instance of our present problem class is
$H=\mathbb{R}^{N}$, $N\in\mathbb{N},$ and $\Phi\left(u,v\right)=(t\mapsto g\left(t,u,v\right))$
with $g$ satisfying a uniform Lipschitz condition on $\mathbb{R}\times\mathbb{R}^{N}\times\mathbb{R}^{N}$\[
\left|g\left(t,u,v\right)-g\left(t,x,y\right)\right|\leq L\left(\left|u-x\right|+\left|v-y\right|\right)\qquad ((t,u,v),(t,x,y)\in \R\times \R^N\times \R^N).\]
 Thus, we obtain unique existence of solutions for \[
\dot{x}\left(t\right)=g\left(t,x\left(\alpha^{-1}\left(t\right)\right),\dot{x}\left(\beta^{-1}\left(t\right)\right)\right),\: t\in\mathbb{R}.\]
 \end{example}

\end{document}